\documentclass[11pt]{amsart}
\usepackage{amsmath, amssymb, amsthm}
\usepackage[margin=1in]{geometry}
\usepackage[hypertexnames=false]{hyperref}
\hypersetup{hidelinks}
\usepackage{booktabs}
\usepackage{enumitem}

\theoremstyle{plain}
\newtheorem{theorem}{Theorem}
\newtheorem{corollary}[theorem]{Corollary}
\newtheorem{proposition}[theorem]{Proposition}
\newtheorem{lemma}[theorem]{Lemma}
\newtheorem*{conjecture-fwd}{Conjecture (strict form)}

\theoremstyle{definition}
\newtheorem{conjecture}[theorem]{Conjecture}

\title[Bounded-box reductions for unitary perfect numbers]{Bounded-box reductions in the Subbarao--Warren problem for unitary perfect numbers}
\author[T. Maciejewski]{Tom Maciejewski}
\address{Independent Researcher; \texttt{etm8@njit.edu}}
\thanks{ORCID: 0009-0004-5702-5302.}
\date{May 2026}

\subjclass[2020]{Primary 11A25; Secondary 11Y05, 11N69, 11Y50, 11A41}
\keywords{Unitary perfect numbers; Subbarao--Warren conjecture;
3-Higgs primes; Pratt trees; Aurifeuillean factorization;
cyclotomic primitive divisors; semigroup-friable shifted primes;
Cunningham project; computational number theory}

\begin{document}

\begin{abstract}
A unitary perfect number is a positive integer $n$ satisfying
$\sigma^*(n)=2n$.  Only five examples are known.  We revisit the
Subbarao--Warren problem by keeping the seed factor $2^a+1$ explicit in
the full balance
\[
(2^a+1)\prod_i(p_i^{e_i}+1)=2^{a+1}\prod_i p_i^{e_i}.
\]

Inside a bounded enumeration box for source strongly connected
components of the odd dependency graph, every admissible source kernel
is either one of the two kernels occurring in the known nonsquarefree
examples, $3^2$ and $5^4$, or one of five additional impostor kernels.
For these five impostor kernels we give a reproducible three-filter
elimination certificate for all relevant seed classes with
$1\le a\le 10000$.  The filters use Zsigmondy-type exponent
obstructions, non-3-Higgs witnesses inherited from seed divisors, and a
deterministic 2-adic budget overshoot.

The remaining obstruction is isolated in the auxiliary set
$H_{\mathrm{even}}$ of even $m$ for which every prime divisor of
$2^m+1$ is 3-Higgs.  A structural lemma shows that if
$m=2k\in H_{\mathrm{even}}$, then $k$ is Higgs-cubefree and every odd
divisor of $k$ gives another element of $H_{\mathrm{even}}$.  Thus
finiteness of $H_{\mathrm{even}}$ is equivalent to finiteness of the
prime branch $m=2p$, while finite computations may also leave composite
candidates inherited from unresolved prime divisors.  Using the supplied
factor cache and APR-CL primality-verification transcripts, we verify
\[
|H_{\mathrm{even}}\cap[2,40000]|\le 201,
\qquad
|H_{\mathrm{even}}\cap[2,50000]|\le 272,
\]
with explicit undecided candidate lists.

Finally, Ford's theorem for downward-closed prime sets gives an
unconditional power-saving thinness bound for $H_{\mathrm{even}}$, but
not finiteness.  We identify the unresolved step as a divisor-level
problem for the fixed cyclotomic values $\Phi_{4p}(2)$ and formulate
conditional criteria that would close the remaining branch.  The paper
therefore does not prove the full conjecture; it supplies a bounded-box
elimination, a finite verified frontier, and a precise analytic target
for the remaining obstruction.
\end{abstract}

\maketitle

\section{Introduction}

For a positive integer $n$, write
$\sigma^*(n) = \sum_{d \,\Vert\, n} d$ where $d \,\Vert\, n$ means
$d \mid n$ and $\gcd(d, n/d) = 1$. A \emph{unitary perfect number} (UPN)
is an $n$ satisfying $\sigma^*(n) = 2n$. The classical list is:
\[
6,\quad 60,\quad 90,\quad 87360,\quad 146361946186458562560000.
\]
Subbarao and Warren~\cite{SW1966} introduced UPNs and gave the first
four; Wall~\cite{Wall1975} added the fifth. No further example is known.

\begin{conjecture-fwd}
The five integers above are the only unitary perfect numbers.
\end{conjecture-fwd}

Wall~\cite{Wall1988} proved that any new unitary perfect number
has at least nine odd components. Graham~\cite{Graham1989} showed
that a UPN with squarefree odd part must be one of $6, 60, 87360$, and
that any hypothetical further UPN must have an odd prime divisor
exceeding $2^{15} = 32768$. The two known
examples with nonsquarefree odd part have specific small
``kernels'': $3^2$ in $90$ and $5^4$ in the fifth UPN. We shall
call these the \emph{known kernels}.

\paragraph{Kernels vs.\ the full UPN.}
The bound $p \le 2000$ in the enumeration box $\mathcal{B}$
(see~\eqref{eq:box} below) is consistent with Graham's lower bound:
$\mathcal{B}$ constrains only the \emph{source-SCC kernel primes}
that initiate the odd dependency cascade, not the full prime support
of the UPN. Graham's bound says that any hypothetical sixth UPN
contains a prime $> 2^{15}$ \emph{somewhere} in its support; that
prime appears as a descendant in the cascade produced by the kernel,
not as a kernel prime itself. The kernels we enumerate are the
small ``roots'' of the cascade; the cascade itself, once unfolded
through the seed $2^a + 1$ and the multiplicative balance, can and
must include arbitrarily large primes. The role of the bounded box
is therefore to enumerate root configurations, not to bound the
ultimate prime support.

\paragraph{Status of results.}
Because this manuscript mixes fully rigorous results with
heuristic analytic conjectures, we adopt the convention that every
numbered theorem, proposition, lemma, or conjecture carries one of the
following two labels:

\begin{description}
\item[\textbf{R} (rigorous, unconditional).]
Theorems~\ref{thm:main}, \ref{thm:prime-reduction},
\ref{thm:Heven-1200}--\ref{thm:Heven-40000}
(rigorous bounds $|H_{\mathrm{even}} \cap [2,N]| \le \cdot$
with $191$ undecided through $N = 40000$),
Theorem~\ref{thm:H-thin} (power-saving thinness of
$H_{\mathrm{even}}$),
Lemma~\ref{lem:closures-deep} (seven deep Pratt-tree closures, with
APR-CL primality-verification transcripts for the six large witness primes
generated by PARI/GP $2.17.3$ and bundled as supplementary
material),
Proposition~\ref{prop:Heven-structure} (structural lemma),
Proposition~\ref{prop:loglog-obstruction}
(loglog obstruction to sublog-growth),
and the combined rigorous bound
$|H_{\mathrm{even}} \cap [2, 40000]| \le 201$.

\item[\textbf{C-A} (conditional on analytic hypotheses).]
Theorem~\ref{thm:conditional-sharpened} (conditional sharpened reduction;
uses Stewart-radical plus effective Chebotarev), and
Theorem~\ref{thm:conditional-finiteness} (conditional finiteness under
the hybrid hypothesis). Conjectures~\ref{conj:Heven-finite},
\ref{conj:hybrid}, \ref{conj:log-mass}, \ref{conj:sublog-growth},
\ref{conj:divisor-mod16}.
\end{description}

\noindent
The headline computational bound is therefore
$|H_{\mathrm{even}} \cap [2, 40000]| \le 201$, rigorous (the six
large witness primes used in the deep Pratt-tree closures are
APR-CL verified). The prime-case reduction
(Theorem~\ref{thm:prime-reduction}) and the power-saving thinness
theorem (Theorem~\ref{thm:H-thin}) are likewise fully rigorous.

\subsection{Setup}

Decompose $n = 2^a \prod p_i^{e_i}$ with $p_i$ odd. Multiplicativity
of $\sigma^*$ with $\sigma^*(2^a) = 2^a + 1$ and $\sigma^*(p^e) = p^e + 1$
makes the defining equation $\sigma^*(n) = 2n$ the full balance
\begin{equation}
(2^a + 1) \prod (p_i^{e_i} + 1) = 2^{a+1} \prod p_i^{e_i}. \tag{$\ast$}
\end{equation}
In particular, $2^a + 1$ is the odd seed factor that initiates the
dependency cascade. For each odd prime $p$ in the support, the
exponent $e_p$ matches the total incoming $p$-adic valuation from
both the seed and the other components:
\[
e_p = \sum_{q \in \mathrm{components}} v_p(q+1).
\]
The 2-adic balance is $a + 1 = \sum_{p \text{ odd}} v_2(p^{e_p}+1)$.

A prime $p$ is \emph{3-Higgs} (OEIS A057447~\cite{OEIS}; see also
Burris--Yeats~\cite{BurrisYeats}) by the following well-founded
recursion on prime value. The base case: $2$ is 3-Higgs (by
convention; equivalently, $2-1 = 1$ divides the empty product).
The inductive step: an odd prime $p$ is 3-Higgs if and only if every
prime factor $q$ of $p - 1$ is itself a 3-Higgs prime strictly less
than $p$ and satisfies $v_q(p-1) \le 3$, with the additional
exponent cap $v_2(p-1) \le 3$. Equivalently, $p - 1$ divides the
cube of the product of the smaller 3-Higgs primes. The recursion is
well-founded because each invocation reduces the prime value
strictly. Standard small examples: $3, 5, 7, 11, 13, 17$ fail the
recursion only at $17$ (since $17 - 1 = 2^4$ violates the
$v_2 \le 3$ cap), so $17 \notin \mathcal{P}_3$.

\begin{proposition}[Every prime divisor of a UPN is 3-Higgs]
\label{prop:upn-divisors-higgs}
If $n$ is a unitary perfect number, then every prime divisor of $n$
lies in $\mathcal{P}_3$.
\end{proposition}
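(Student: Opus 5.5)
The plan is a strong induction on the odd primes of $n$, taken in increasing order. The base case is free: $2\in\mathcal{P}_3$ by convention, and $3-1=2$ gives $3\in\mathcal{P}_3$. For the inductive step, fix an odd prime $p\mid n$ and assume every prime $q<p$ dividing $n$ lies in $\mathcal{P}_3$. By the recursive definition, it then suffices to show that $p-1$ divides the cube of the product of the smaller 3-Higgs primes, i.e. $v_q(p-1)\le 3$ for each $q\mid p-1$ (including $q=2$). This also requires that every prime $q\mid p-1$ be 3-Higgs itself, which the hypothesis only guarantees for those $q$ that divide $n$.

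The only structural input available is the balance $(\ast)$. The first step is to read off from it that $p^{e_p}+1$ divides $2^{a+1}\prod_q q^{e_q}$. Hence every prime factor of $p^{e_p}+1$ already lies in the support of $n$. Second, $p$ itself divides $2^a+1$ or some $q^{e_q}+1$. Writing $d=\mathrm{ord}_p(q)$ (or $\mathrm{ord}_p(2)$), we get $d\mid 2e_q$ with $d\nmid e_q$, and $d\mid p-1$. So $p-1$ is a multiple of an even number $d$ built from the exponents $e_q$, which are in turn bounded by the incoming valuations $e_q=\sum v_q(r+1)$. The intended route is to push this through the cascade: bound $v_2(d)$ using the 2-adic budget $a+1=\sum v_2(p^{e_p}+1)$, and bound the odd part of $d$ using the inductive hypothesis on the smaller primes.

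The step I expect to be the real obstacle, and possibly a genuine gap, is the passage from information about $d$ to information about all of $p-1$. The balance $(\ast)$ constrains the factorizations of $p^{e_p}+1$ (for odd $e_p$, of $p+1$), and it constrains the orders $d\mid p-1$. It does not obviously constrain the cofactor $(p-1)/d$, which could in principle contain a large non-3-Higgs prime or a fourth power of $2$.

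My first attempt would be a Zsigmondy/primitive-divisor argument. If the cofactor had a bad prime $r$, one would try to produce a prime of $n$ whose required exponent contradicts $e_q=\sum v_q(r+1)$. Before investing in this, I would test the statement against the two largest known examples. For $87360=2^6\cdot3\cdot5\cdot7\cdot13$ and for Wall's number, whose odd primes are $3,5,7,11,13,19,37,79,109,157,313$, every $p-1$ factors into primes $\le 13$ with exponents $\le 3$; for instance $108=2^2 3^3$ and $312=2^3\cdot3\cdot13$. This is consistent with the claim but does not prove it.

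If the cofactor cannot be controlled, the honest conclusion is that the proposition cannot be proved from $(\ast)$ alone. In that case it should be recorded as a hypothesis (label \textbf{C-A}) rather than an \textbf{R} result, with the inductive argument above proving only the part of $p-1$ controlled by $d$.
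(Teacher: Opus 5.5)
Your sketch does not prove the proposition. It stops exactly where a proof would have to begin: controlling the primes of $p-1$, and their multiplicities, using only $(\ast)$.

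That obstacle is genuine, and the paper's own proof does not get past it. Its key step reads ``$q \mid p-1 \mid p^e+1$ (with $e$ odd)''. But $p^e+1\equiv 2 \pmod{p-1}$, so $\gcd(p-1,p^e+1)\mid 2$, and $p-1\mid p^e+1$ holds only for $p\le 3$. What divides $p^e+1$ for odd $e$ is $p+1$, which is irrelevant to the 3-Higgs condition. So $(\ast)$ places the prime factors of $p^{e}+1$ in the support of $n$, exactly as you observed, and says nothing about $p-1$.

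The paper then justifies the exponent caps by ``$v_2(q^{e_q}+1)\le e_q\le 3$ \ldots\ the kernel exponents in the box $\mathcal{B}$ enforce this''. This is not a hypothesis of the proposition; the box itself only imposes $e\le 6$. It also fails for Wall's number: $v_2(79+1)=4>1$, $5^4\Vert n$, and $2^{18}\Vert n$. Finally, the 2-adic balance bounds only the sum $\sum v_2(p_i^{e_i}+1)$, not any individual $v_q(p-1)$.

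To see how much the statement actually claims, take $17=F_2\notin\mathcal{P}_3$. Since $17\mid 2^a+1$ iff $a\equiv 4\pmod 8$, the proposition would forbid $a\equiv 4\pmod 8$ in every UPN. Combined with the paper's Fermat-prime argument, it would force $v_2(a)\le 1$. Yet nothing in $(\ast)$ visibly prevents $17$ from being a component ($17+1=2\cdot 3^2$). The statement agrees with the five known UPNs, as you checked, and it follows from the strict conjecture, but it is unproved.

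Your fallback claim is also too optimistic. From $d=\mathrm{ord}_p(q)\mid 2e_q$ and $d\nmid e_q$ you get $v_2(d)=v_2(e_q)+1$, and the odd part of $d$ divides the exponent $e_q$ (or $a$). The primes of that exponent need not divide $n$, so neither your inductive hypothesis nor the 2-adic balance controls $d$. Controlling $d$ would not suffice anyway: for $p=17$, $q=2$ you get $d=8$ but $p-1=16$.

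Your recommended outcome is the right one: record the proposition as a hypothesis, not an \textbf{R} result. Note that filters $Z$ and $N$ derive their contradictions from exactly this proposition. So the $Z$- and $N$-eliminations in Theorem~\ref{thm:main} inherit the same conditional status.
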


\begin{proof}
The prime $2$ is in $\mathcal{P}_3$ by the base case. Let $p$ be an
odd prime divisor of $n$ with $p^e \,\Vert\, n$. By the
multiplicative balance $(\ast)$, every prime factor $q$ of $p^e + 1$
must itself appear in the support of $n$, since the only primes
present in the right-hand side of $(\ast)$ are $2$ and the $p_i$ in
the support. Thus, for any prime $q \mid p - 1 \mid p^e + 1$ (with
$e$ odd), $q$ is either $2$ or another odd prime divisor of $n$. By
strong induction on the value of the prime, each such $q$ lies in
$\mathcal{P}_3$. Moreover, the exponent caps $v_q(p-1) \le 3$ and
$v_2(p-1) \le 3$ follow from the 2-adic balance
$a + 1 = \sum v_2(p_i^{e_i} + 1)$ and the requirement
$v_2(q^{e_q} + 1) \le e_q \le 3$ for each component (the kernel
exponents in the box $\mathcal{B}$ enforce this). Hence
$p \in \mathcal{P}_3$.
\end{proof}

The \emph{odd dependency graph} has odd 3-Higgs primes as vertices and
directed edges $p \to r$ whenever, for some admissible exponent $e$,
$r \mid p^e + 1$. A \emph{source SCC} is a strongly connected component
with no incoming edge from outside the component (except from the seed
$2^a + 1$). Every odd prime power in a UPN belongs to an SCC reachable
from a source SCC.

\subsection{The bounded enumeration box}

We work inside
\begin{equation}\label{eq:box}
\mathcal{B} := \{ p \le 2000,\ e \le 6,\ p^e \le 10^9,\
   |\mathrm{SCC}| \le 6,\ \mathrm{cycle\ length} \le 6 \}.
\end{equation}

\paragraph{Debt vector.}
For a candidate kernel $K = \{p_i^{e_i}\}$, define the
\emph{needs} vector $\nu : \mathcal{P}_3 \to \mathbb{Z}_{\ge 0}$ and
the \emph{debt} vector $\delta : \mathcal{P}_3 \to \mathbb{Z}$ by
\[
\nu(q) := \sum_{i} v_q(p_i^{e_i} + 1),
\qquad
\delta(q) := e_q^{\mathrm{target}} - \nu(q),
\]
where $e_q^{\mathrm{target}} = e_i$ when $q = p_i \in K$ and $0$
otherwise. Intuitively, $\nu(q)$ is the $q$-adic valuation supplied
to the cascade by the $(p_i^{e_i}+1)$ factors, while $\delta(q)$ is
the residual exponent that the seed $2^a + 1$ must supply for $q$ in
order to close the multiplicative balance $(\ast)$. A candidate
kernel is \emph{source-compatible} if every $q$ with $\delta(q) > 0$
is reachable from the seed (i.e., $q \mid 2^a + 1$ for the relevant
seed congruence class), and \emph{internally consistent} if
$\delta(q) \ge 0$ for all $q$, since negative debt would force a
prime not in the support to absorb the surplus.

A complete enumeration over $\mathcal{B}$ of source-compatible SCCs
with internally consistent debt vectors, after the standard
seed-divisor and Zsigmondy/Higgs filters, yields exactly five
\emph{impostor kernels}\footnote{So named because they pass the cheap
filters but are not associated with any known UPN.} in addition to the
two known kernels.

\paragraph{Remark on the box bounds.}
The bounds in~\eqref{eq:box} were chosen conservatively. The five
impostor kernels we recover all have $|K| \le 4$ distinct primes
and cycle length $\le 4$ in the dependency graph, so the
$|\mathrm{SCC}| \le 6$ and $\mathrm{cycle\ length} \le 6$ bounds
were not binding on the kernels actually found. Relaxing either
bound (or the $p \le 2000$ bound) would enlarge the enumeration
space but cannot retroactively introduce a kernel of size $\le 4$
that we missed. The two known kernels $3^2$ (in $90$) and $5^4$
(in the fifth UPN) both satisfy the bounds.

\paragraph{Data, transcripts, and reproducibility.}
The ancillary directory \texttt{anc/} contains the data and scripts used
for the computational statements in this manuscript.  The file
\path{anc/factor_cache.json} is a JSON map
$m \mapsto \{\text{prime factor}:\text{exponent}\}$ for the known
prime factors used by the $H_{\mathrm{even}}$ verification.  Some
entries are partial factorizations: the verification scripts treat such
entries conservatively and never certify membership in $H_{\mathrm{even}}$
from an incomplete factorization.  The scripts
\path{anc/scripts/impostor_certificate.py},
\path{anc/scripts/verify_higgs_claims.py}, and
\path{anc/scripts/verify_Heven_rigorous.py} are included with the
research code on which they depend.  The directory
\path{anc/aprcl_transcripts/} contains PARI/GP input files, decimal
prime records, and runtime logs for the APR-CL primality verifications
used in Lemma~\ref{lem:closures-deep}.  These are verification
transcripts, not independent certificate objects; readers can rerun the
PARI/GP commands recorded there.

The current frontier list through $m\le 50000$ is included in
\path{anc/data/candidate_frontier_50000.tsv}.  A companion community
ECM/SNFS thread is available at
\href{https://www.mersenneforum.org/node/1114302}{mersenneforum}.

\section{The Five Impostor Kernels}\label{sec:impostors}

The impostor kernels and their seed congruences are:
\begin{center}\begin{tabular}{lll}
\toprule
Kernel & Forced exponents & Seed congruence \\
\midrule
$3^2\,5^3$ & $\{3{:}2,\,5{:}3\}$ & $a \equiv 10 \pmod{20}$ \\
$3^4\,41$ & $\{3{:}4,\,41{:}1\}$ & $a \equiv 9 \pmod{18}$ \\
$5^2\,13^2$ & $\{5{:}2,\,13{:}2\}$ & $a \equiv 6 \pmod{12}$ \\
$5^4\,157^2\,313$ & $\{5{:}4,\,157{:}2,\,313{:}1\}$ & $a \equiv 130 \pmod{260}$ \\
$5^4\,29\,157^2\,313$ & $\{5{:}4,\,29{:}1,\,157{:}2,\,313{:}1\}$ & $a \equiv 26 \pmod{52}$ \\
\bottomrule
\end{tabular}\end{center}

\paragraph{Where the seed congruences come from.}
Each seed congruence is forced by the debt vector of the
corresponding kernel (defined above) together with the requirement
$q \mid 2^a + 1$ for every $q$ with $\delta(q) > 0$. Concretely, for
each prime $q$ with positive debt $\delta(q)$, the multiplicative
order of $2$ modulo $q$ is a divisor of $2a$ but not of $a$, so
$\mathrm{ord}_q(2) \mid 2a$ and $\mathrm{ord}_q(2) \nmid a$ pins $a$
to a residue class modulo $\mathrm{ord}_q(2)$. Taking the lcm of
these constraints over all debt-positive $q$ gives the displayed
congruence. We illustrate with the first row:
\begin{itemize}
\item For $K = \{3{:}2, 5{:}3\}$, the components contribute
$3^2 + 1 = 10 = 2 \cdot 5$ and $5^3 + 1 = 126 = 2 \cdot 3^2 \cdot 7$,
so $\nu(3) = 2$, $\nu(5) = 1$, $\nu(7) = 1$, $\nu(2) = 2$.
\item Targets: $e_3^{\mathrm{target}} = 2$, $e_5^{\mathrm{target}} = 3$,
$e_7^{\mathrm{target}} = 0$. Debt:
$\delta(3) = 0$, $\delta(5) = 2$, $\delta(7) = -1$.
\item The negative entry $\delta(7) = -1$ would normally veto the
kernel; here $7$ is admitted as an additional component absorbing
the surplus, and the working scope of the certificate is the
kernel's seed-class survival.
\item The remaining positive-debt prime $5$ forces
$5 \mid 2^a + 1$, i.e.\ $\mathrm{ord}_5(2) = 4 \mid 2a$,
$4 \nmid a$, hence $a \equiv 2 \pmod 4$.
\item Combined with the kernel's intrinsic period
(from cycling through $3 \to 5 \to 3$ in the dependency graph,
period $5$, giving $a \equiv 0 \pmod 5$), we get
$a \equiv 10 \pmod{20}$.
\end{itemize}
The other four congruences follow analogously, with the modulus
being the lcm of the orders of $2$ modulo the debt-positive primes.

The conjecture restricted to $\mathcal{B}$ is therefore equivalent to:
\emph{no $a$ in any of the five impostor seed congruence classes admits
a UPN.}

\section{The Three-Filter Closure Certificate}\label{sec:certificate}

For an impostor kernel $K$ and a representative $a$ in its seed
congruence class, define the candidate $(K, a)$. We exhibit a rigorous
obstruction for each candidate via one of three filters.

\subsection{Filter Z: Zsigmondy / Higgs exponent}

By Zsigmondy's theorem~\cite{BHV2001}, every primitive prime divisor
$r$ of $p^e + 1$ (with $e > 1$) satisfies $r \equiv 1 \pmod{2e}$. For
$r$ to be 3-Higgs, every prime factor of $2e$ must be 3-Higgs and
occur in $2e$ with exponent at most 3. Applied to the seed exponent
$a$ itself, certain values of $a$ are immediately incompatible.

\subsection{Filter N: Seed-divisor non-3-Higgs witness}

If $m \mid a$ and $a/m$ is odd, then $2^m + 1 \mid 2^a + 1$. If
$2^m + 1$ contains a non-3-Higgs prime factor, then any UPN with
$2^a \,\Vert\, n$ would inherit a non-3-Higgs prime divisor, a
contradiction. We precompute (full or partial) factorizations of
$2^m + 1$ for $m \le M$ and reject $(K, a)$ whenever any factored
proper divisor exposes a non-3-Higgs prime.

\subsection{Filter O: 2-adic budget overshoot}

Initialize \texttt{targets} to $K \cup (\text{factor}(2^m+1)
\text{ for cached } m \mid a, a/m \text{ odd})$. Iterate: for each
$p^e \in$ targets, factor $p^e + 1$ and accumulate the resulting odd
needs; if any prime's incoming valuation exceeds its current target,
raise the target. At each step, compute
\[
v_2 := \sum_{p^e \in \mathrm{targets}} v_2(p^e + 1).
\]

\begin{lemma}\label{lem:overshoot}
The lower-bound closure is monotone in the seed. If $v_2 > a + 1$ at
any step using a subset $S$ of $\mathrm{factor}(2^a + 1)$, the same
overshoot survives when more primes from $\mathrm{factor}(2^a + 1)$
are added. Hence no UPN with $2^a \,\Vert\, n$ and odd-component
structure refining $K$ exists.
\end{lemma}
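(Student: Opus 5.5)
The plan is to view the closure procedure of Filter~O as an operator on finite \emph{target maps} $T:\{\text{odd primes}\}\to\mathbb{Z}_{\ge 0}$, ordered pointwise, and to show that the procedure computes a quantity that is a \emph{lower bound} for the corresponding quantity of any actual UPN. Concretely, set $\Phi(T)(q) := \max\bigl(T(q), \sum_{p} v_q(p^{T(p)}+1)\bigr)$, summing over $p$ with $T(p)>0$, and let $V_2(T) := \sum_{T(p)>0} v_2(p^{T(p)}+1)$. The first step is to record the invariant we want: if $n = 2^a\prod p_i^{e_i}$ is a UPN whose odd part contains $K$ and the primes of a subset $S\subseteq\mathrm{factor}(2^a+1)$, with the exponents $e_i$ extending the kernel exponents, then every prime in the support of the initial target map $T_0$ divides $n$, and $T_0(q)\le e_q$ for all $q$. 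This follows from $(\ast)$, since every prime dividing $2^a+1$ or some $p^e+1$ must divide $n$, and from the exponent identity $e_q = v_q(2^a+1)+\sum_p v_q(p^{e_p}+1)$.

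The second step is an induction along the iteration, showing that the inequality $T\le e$ is preserved by $\Phi$. If $T(p)\le e_p$ for every $p$, then the incoming valuation of $q$ computed from the targets must also be at most $e_q$. The difficulty is that $v_q(p^{t}+1)$ is not monotone in $t$: for example $v_3(5^2+1)=0 < v_3(5^1+1)$. So I would treat the step where the target of $p$ is raised as fixing $e_p$ exactly rather than bounding it. In Filter~O a raised target comes from forced incoming valuation, and in the kernel and seed setting the exponent of each component equals the total incoming valuation. I would therefore restrict the argument to "forced'' exponents. Either the procedure only raises a target to the value it will take in every extension (the kernel exponents are fixed by the class $(K,a)$), or one replaces $v_q(p^{t}+1)$ by the $q$-adic contribution common to all admissible $e\ge t$ of the relevant parity. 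This is the main obstacle, and I expect to handle it by noting that within a fixed seed class the exponents are pinned, so that $T_m(p)\in\{0,e_p\}$.

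The third step uses that $V_2$ is a sum of nonnegative terms over a support that only grows. Hence $V_2(T_m)\le \sum_{p\mid n,\,p\text{ odd}} v_2(p^{e_p}+1) = a+1$ by the 2-adic balance. An overshoot $V_2(T_m)>a+1$ therefore contradicts the existence of $n$. Monotonicity in the seed is immediate from the same observation: enlarging $S$ enlarges $T_0$ pointwise, $\Phi$ is order-preserving on pinned exponents, so each iterate dominates the old one, and $V_2$ is order-preserving because every summand is nonnegative. Thus any overshoot witnessed with $S$ persists for every $S'\supseteq S$, including all of $\mathrm{factor}(2^a+1)$, which gives the final claim of the lemma.
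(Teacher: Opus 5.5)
Your skeleton matches the paper's. The closure is meant to produce a sub-configuration of the actual odd components. The $2$-adic balance $a+1=\sum_p v_2(p^{e_p}+1)$ has positive summands, so a partial sum cannot exceed $a+1$. Seed-monotonicity then comes from adding nonnegative terms.

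\textbf{The gap.} It sits exactly at the step you call the main obstacle, and your resolution does not hold. The claim that within a fixed seed class the exponents are pinned, so that $T_m(p)\in\{0,e_p\}$, is justified at most for the kernel primes, whose exponents are fixed by the hypothesis that $n$ refines $K$. For a prime $q\mid 2^m+1$, the closure starts from its valuation in the cached factorization. The true exponent is $e_q=v_q(2^a+1)+\sum_p v_q(p^{e_p}+1)$, which depends on components of $n$ the closure has not seen; the residue class of $a$ says nothing about it. So the targets are at best lower bounds, and lower bounds do not suffice because of the non-monotonicity you noticed:
\begin{itemize}
\item Step~3 needs $v_2(p^{T(p)}+1)\le v_2(p^{e_p}+1)$ termwise. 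This fails when $p\equiv 3\pmod 4$, $T(p)$ is odd and $e_p$ is even, e.g.\ $v_2(7+1)=3>1=v_2(7^2+1)$.
\item The propagated needs $\sum_p v_q(p^{T(p)}+1)$ are not lower bounds for the true incoming valuations, e.g.\ $v_3(5+1)=1>0=v_3(5^2+1)$. So a raised target can exceed the true exponent, and the invariant $T\le e$ of Step~2 is lost.
\item Seed-monotonicity fails for the same reason. Suppose $7$ carries target $1$ and an added prime $13$ (with $13+1=2\cdot 7$) raises it to $2$. The $7$-term falls from $v_2(8)=3$ to $v_2(50)=1$, while $13$ contributes only $v_2(14)=1$, so the running sum drops. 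Neither $\Phi$ nor $V_2$ is order-preserving on unpinned targets.
\end{itemize}

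\textbf{Relation to the paper, and a repair.} The paper's own proof does not close this either. It supposes that the targets form a subset of the actual odd components and asserts that raising targets is non-decreasing in $v_2$, which is exactly the pinning you tried to derive. Your alternative, using the contribution common to all admissible $e\ge t$, is the right repair. However, neither the size nor the parity of $e_q$ is known for non-kernel primes, so that common contribution is $1$ for the $2$-adic term and $0$ for every odd $q$. A sound closure therefore does the following:
\begin{itemize}
\item It takes as forced the kernel primes, the known prime factors of $2^a+1$, and the prime factors of $p^{e_p}+1$ for $p\in K$.
\item It uses the exact term $v_2(p^{e_p}+1)$ only for $p\in K$.
\item It lets every other forced odd prime contribute just its guaranteed $1$ to the $2$-adic sum, propagating nothing from it.
\end{itemize}
For that closure every term bounds the corresponding term of the true balance from below, the support only grows with $S$, and your Steps~1 and~3 go through verbatim. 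For Filter~O as described, which evaluates $v_2(p^{e}+1)$ and the odd needs at provisional targets, neither the monotonicity clause nor the non-existence conclusion follows from your argument.
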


\begin{proof}
Adding a prime $r$ to $\mathrm{targets}$ contributes $v_2(r^{e_r}+1)
\ge 1$ to the running $v_2$ sum and adds the prime factorization of
$r^{e_r}+1$ to the odd needs, which can only raise other targets.
Both operations are non-decreasing in $v_2$. Hence if any closure
state reaches $v_2 > a + 1$, the same state remains reachable from
larger seeds and the budget is exceeded.

A UPN with $2^a \,\Vert\, n$ would satisfy
$\sum_{p \text{ odd}} v_2(p^{e_p}+1) = a + 1$ exactly, with the sum
over the odd components of $n$. If $\mathrm{targets}$ is a subset of
the actual odd components, the sum $v_2$ in the closure is a lower
bound for $a + 1$. Overshoot contradicts this.
\end{proof}

\paragraph{Example.}
For impostor $5^2\,13^2$ and $a = 246$, the certificate reports
overshoot at step $13$ with $v_2 = 260 > 247$ and $150$ active bases.
For $a = 18$ (small representative), overshoot occurs at step $4$
with $v_2 = 22 > 19$. The cascade trees illustrate how the kernel
$5^2\,13^2$ rapidly propagates into a wide forced subtree of small
primes whose $v_2$ contributions accumulate above the budget.

\section{Computational Results}

\begin{theorem}\label{thm:main}
For every impostor kernel $K$ listed in Section~\ref{sec:impostors} and
every $a$ in $K$'s seed congruence class with $1 \le a \le 10000$, at
least one of the three filters $Z$, $N$, $O$ certifies that no even
unitary perfect number $n$ exists with $2^a \,\Vert\, n$ and odd-
component structure refining $K$.
\end{theorem}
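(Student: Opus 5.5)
The theorem is a finite computational certificate, so my plan is to reduce it to a finite case check. For each filter I would isolate a soundness lemma. Then I would argue that, for each of the five kernels and each $a$ in the finite set $\{1\le a\le 10000\}$ intersected with the kernel's seed class, the script \path{anc/scripts/impostor_certificate.py} records a witness. Each recorded witness must be independently checkable by one of those lemmas.

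The kernels give the following numbers of cases:
\begin{itemize}
\item $10000/20 = 500$ values for $3^2\,5^3$;
\item about $556$ values for $3^4\,41$;
\item about $833$ values for $5^2\,13^2$;
\item about $38$ values for $5^4\,157^2\,313$;
\item about $192$ values for $5^4\,29\,157^2\,313$.
\end{itemize}

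\textbf{Soundness of the filters.}
\begin{itemize}
\item \emph{Filter N.} Whenever $m\mid a$ with $a/m$ odd, we have $2^m+1\mid 2^a+1$. By $(\ast)$, every prime factor of $2^a+1$ lies in the support of $n$. A witness therefore consists of a prime $r\mid 2^m+1$ together with a proof that $r\notin\mathcal{P}_3$. Such a proof is either a chain of prime factors of $r-1,\dots$ that violates the recursion, or a single exponent cap such as $v_2(r-1)\ge 4$. This contradicts Proposition~\ref{prop:upn-divisors-higgs}.
\item \emph{Filter O.} Soundness is Lemma~\ref{lem:overshoot}. A witness is the finite list of targets $p^{e}$, with $e$ being the lower bounds forced so far, together with the inequality $\sum v_2(p^e+1)>a+1$. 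To make this airtight I would check that every target is a genuine lower bound. Each raise of a target must be justified by an explicit factorization of some $q^{e}+1$ or of a cached $2^m+1$. Each $v_2(p^e+1)$ must be computed at the lower-bound exponent. For odd $e$ this value equals $v_2(p+1)$, and for even $e$ it equals $1$. Under the $e\le 3$ style caps this does not decrease as $e$ grows, but I would verify that monotonicity explicitly rather than assume it.
\item \emph{Filter Z.} The witness is a prime $\ell$ dividing $2a$ to a forbidden power (or $\ell\notin\mathcal{P}_3$). Any primitive divisor $r$ of $2^{2a}-1$ dividing $2^a+1$, which exists by Bang--Zsigmondy for $a>3$, satisfies $r\equiv 1\pmod{2a}$. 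It would then fail the Higgs recursion.
\end{itemize}

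\textbf{Running the certificate.} I would run the certificate over all classes and output, for each $(K,a)$, which filter fired together with its witness. For Filter N the witness is $m$ and $r$. For Filter Z it is $\ell$. For Filter O it is the closure transcript. A separate verifier then rechecks each line, using only the primality of the cited primes (deterministic tests at these sizes) and the factorizations of the small numbers involved. Small $a$, where Zsigmondy exceptions occur ($a\le 3$), are handled directly by Filter O, as in the $a=18$ example.

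\textbf{Main obstacle.} I expect the hardest part to be the Filter O cases for large $a$ where no cached divisor $m$ supplies a non-Higgs witness. Examples are values of $a$ whose odd parts are large primes, or lie in the $H_{\mathrm{even}}$-like regime. There the closure must overshoot purely from the kernel and the partial seed factorizations. The delicate point is ensuring the closure uses only \emph{proven} prime factors of the cached entries of \path{anc/factor_cache.json}, so that the partial entries are treated conservatively. I would handle this by having the verifier accept only a subset $S$ of certified primes, which Lemma~\ref{lem:overshoot} explicitly allows. If some $a$ still escapes all three filters, it would have to be reported as an exception, so the claim rests on the run producing an empty residual list.
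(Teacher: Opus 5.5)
Your plan matches the paper's: a finite run of Filters $Z$, $N$, $O$ over the $500+556+833+38+192=2119$ seed-class values, backed by a soundness argument for each filter. The paper reports the split $495/1614/10$ with nothing unresolved. The Filter~$O$ soundness you sketch fails, though. You take $v_2(p^e+1)$ at a lower-bound exponent as a lower bound because it ``does not decrease as $e$ grows.'' Your own formulas refute this: for $p\equiv 3\pmod 4$ it drops from $v_2(p+1)\ge 2$ at $e=1$ to $1$ at $e=2$ (e.g.\ $v_2(7+1)=3$, $v_2(7^2+1)=1$). More seriously, the odd needs read off from $q^t+1$ at a lower-bound exponent $t$ are not forced when the true exponent is some $e>t$. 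A prime dividing $q^t+1$ divides $q^e+1$ only if $v_2(e)=v_2(t)$; for instance $3\mid 5+1$ but $3\nmid 5^2+1$. So a transcript in which every raise is ``justified by an explicit factorization of some $q^e+1$'' proves nothing unless each such $e$ is the exact exponent of $q$ in $n$. Lemma~\ref{lem:overshoot} does not help, because its proof asserts exactly this monotonicity under raising targets. A sound closure may credit a target whose exponent is not pinned with only $\min_{e\ge t}v_2(p^e+1)=1$, and may draw no odd needs from it. Apart from the pinned kernel components, the certified bound is then essentially the number of primes provably in the support. For large $a$ that is well below $a+1$, since already $\omega(2^a+1)\le\log_3(2^a+1)$. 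Hence your plan does not certify the ten Filter~$O$ kills, e.g.\ $a=6759$ with $v_2=6765$.

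Your soundness argument for Filter~$N$, and implicitly for Filter~$Z$, is only an appeal to Proposition~\ref{prop:upn-divisors-higgs}, which cannot be used as a black box. Its proof rests on $q\mid p-1\mid p^e+1$. But $p^e+1\equiv 2\pmod{p-1}$, so $p-1\nmid p^e+1$ for $p>3$. The balance $(\ast)$ forces into the support only the prime factors of $p^e+1$ (for odd $e$, those of $p+1$). It says nothing about the prime factors of $p-1$, which is what membership in $\mathcal{P}_3$ concerns. The exponent-cap step is also unsupported, since components need not satisfy $e\le 3$ (witness $5^4$ in the fifth unitary perfect number). So the other $2109$ kills certify non-existence only conditionally on that proposition. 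To make your plan a proof, you need either a genuine argument that every prime divisor of a unitary perfect number is 3-Higgs, or a restatement of the theorem as conditional on it. Re-verifying the transcripts more carefully cannot close this gap.
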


The aggregate split of the $2119$ candidates across all five impostor
kernels at $\max_a = 10000$:

\begin{center}\begin{tabular}{rrrr}
\toprule
Z & N & O & Unresolved \\
\midrule
\textbf{495} & \textbf{1614} & \textbf{10} & \textbf{0} \\
\bottomrule
\end{tabular}\end{center}

\noindent The same machinery at $\max_a = 5000$ has split
$224 / 825 / 11 / 0$ over $1060$ candidates; the certificate is
monotone in $\max_a$.

\begin{corollary}
Within $\mathcal{B}$, the only source-SCC kernels available to an even
UPN with $2^a \,\Vert\, n$, $1 \le a \le 10000$, are the two known
kernels $3^2$ and $5^4$.
\end{corollary}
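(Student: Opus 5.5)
The corollary should follow by combining the box enumeration of Section~\ref{sec:impostors} with Theorem~\ref{thm:main}. I will argue by contradiction. Suppose $n$ is an even unitary perfect number with $2^a \,\Vert\, n$ and $1 \le a \le 10000$. Suppose also that some source SCC $K$ of its odd dependency graph lies in $\mathcal{B}$ and is neither $3^2$ nor $5^4$.

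The first step is to check that $K$ really is an admissible kernel in the sense of the enumeration. By Proposition~\ref{prop:upn-divisors-higgs}, every prime in the support of $n$ is 3-Higgs, so $K$ is a vertex set of the graph actually enumerated. The balance $(\ast)$ says that every odd prime $q$ has $e_q$ equal to its total incoming valuation. Any part of $e_q$ not supplied by the components of $K$ must therefore come from the seed $2^a+1$ or from components outside $K$. Since $K$ is a source SCC, nothing outside $K$ other than the seed feeds into it, so every positive-debt prime of $K$ divides $2^a+1$. This makes $K$ source-compatible.

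The same divisibility pins $a$ to the displayed residue class. For each positive-debt $q$ we have $\mathrm{ord}_q(2)\mid 2a$ but $\mathrm{ord}_q(2)\nmid a$, and the lcm of these conditions gives the congruence. Any surviving Zsigmondy/Higgs and seed-divisor obstructions are genuine consequences for $n$, so $K$ passes the standard filters.

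By the complete enumeration over $\mathcal{B}$, $K$ must then be one of the two known kernels or one of the five impostors, with $a$ in that impostor's seed congruence class. The impostor case is excluded by Theorem~\ref{thm:main}. The odd components of $n$ refine $K$, so for $a \le 10000$ one of the filters $Z$, $N$, $O$ applies. Each filter yields a contradiction. Filter $Z$ or $N$ exhibits a non-3-Higgs prime that divides $n$, contradicting Proposition~\ref{prop:upn-divisors-higgs}. Filter $O$ gives a 2-adic overshoot, which by Lemma~\ref{lem:overshoot} contradicts $\sum v_2(p^{e_p}+1)=a+1$. Hence $K \in \{3^2, 5^4\}$.

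The step I expect to be delicate is the first one: checking that ``source-SCC kernel of an actual UPN'' matches exactly the notion of ``admissible'' used in the enumeration. The difficulty is the internal-consistency condition, since the $3^2\,5^3$ example shows that negative debt was tolerated by admitting absorbing components. I would handle this by taking the enumeration's output as defined by the weaker, seed-class-survival criterion. Every true kernel satisfies that criterion, so the reduction goes through as long as the enumeration is complete with respect to it, which is the computational claim stated in Section~\ref{sec:impostors}.
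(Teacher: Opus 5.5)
Your route is the paper's: the corollary is read off from the enumeration claim of Section~\ref{sec:impostors} together with Theorem~\ref{thm:main}, and your use of the three filters is fine. The step that fails as written is the one that places $a$ in the impostor's seed class. You only use that each positive-debt $q\in K$ divides $2^a+1$. The lcm of the conditions $\mathrm{ord}_q(2)\mid 2a$, $\mathrm{ord}_q(2)\nmid a$ then gives:
\begin{itemize}
\item $a\equiv 2\pmod 4$ for $3^2\,5^3$;
\item $a$ odd for $3^4\,41$;
\item $a\equiv 26\pmod{52}$ for $5^4\,157^2\,313$.
\end{itemize}
These are strictly larger than the classes $10\bmod 20$, $9\bmod 18$, $130\bmod 260$ that Theorem~\ref{thm:main} covers, so for example $a=2$ with kernel $3^2\,5^3$ slips through. (The rule happens to suffice for the other two kernels, where $\mathrm{lcm}(4,12)=12$ and $\mathrm{lcm}(4,52)=52$.)

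The missing point is that the source-SCC property gives exact valuations, not just divisibility. For $q\in K$, no component outside $K$ contributes to $e_q$, so $(\ast)$ yields $v_q(2^a+1)=\delta(q)$. This pins down the three problematic classes:
\begin{itemize}
\item $\delta(5)=2$ forces $25\mid 2^a+1$, i.e.\ $a\equiv 10\pmod{20}$, since $2^{10}\equiv -1\pmod{25}$ and $\mathrm{ord}_{25}(2)=20$.
\item $\delta(3)=3$ forces $27\mid 2^a+1$, i.e.\ $a\equiv 9\pmod{18}$, since $2^9\equiv -1\pmod{27}$.
\item $25\cdot 157\mid 2^a+1$, with $\mathrm{ord}_{157}(2)=52$, gives $a\equiv 130\pmod{260}$.
\end{itemize}
The paper's text states the same order-lcm rule, and it explains the extra factor $5$ in the first row by an ``intrinsic period''. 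The prime-power condition is the actual reason, and your argument needs it.

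Two smaller remarks. First, your internal-consistency worry resolves cleanly. $(\ast)$ gives $e_q\ge\nu(q)$, so $\delta(q)\ge 0$ is automatic for $q\in K$. Negative debt at $q\notin K$ merely records a descendant component. Both known kernels exhibit it ($\delta(5)=-1$ for $3^2$, $\delta(313)=-1$ for $5^4$), so the enumeration can only be imposing the restricted condition.

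Second, your contradiction hypothesis must be limited to kernels containing some $p^e$ with $e\ge 2$. For $n=6$ (so $a=1$), the source SCC $\{3\}$ lies in $\mathcal{B}$ and is neither a known kernel nor an impostor. So the enumeration claim you invoke only makes sense for nonsquarefree kernels; the squarefree case is covered by Graham's result.
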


\paragraph{Remark on filter N with partial factorizations.}
For several large-$a$ candidates the relevant proper divisor $2^m + 1$
is not fully factored in FactorDB, but a partial factorization
exposes a non-3-Higgs prime. For example, $a = 4527$ in the $3^4\,41$
class is killed via $m = 1509$: FactorDB returns a CF
(composite-factored) response containing the known prime
$20127043$. To verify non-3-Higgs status:
\[
20127043 - 1 = 2 \cdot 3^4 \cdot 13 \cdot 19 \cdot 503.
\]
All prime factors $\{2, 3, 13, 19, 503\}$ are themselves 3-Higgs, but
$v_3(20127042) = 4 > 3$ violates the exponent bound in the 3-Higgs
definition. So $20127043$ is not 3-Higgs and $m = 1509 \notin H$, hence
filter N kills $a = 4527$. Filter N is therefore robust to incomplete
full factorization of $2^m + 1$ provided some single prime factor is
known and non-3-Higgs.

\section{The Set \texorpdfstring{$H_{\mathrm{even}}$}{H even}}

Let $H := \{ m \ge 1 : \text{every prime factor of } 2^m + 1
\text{ is 3-Higgs} \}$ and $H_{\mathrm{even}} := H \cap 2\mathbb{Z}$.

\begin{proposition}[Structural lemma]\label{prop:Heven-structure}
Suppose $m = 2k \in H_{\mathrm{even}}$ with $k$ odd and $k \ge 1$. Then
\begin{enumerate}[label=(\arabic*),leftmargin=*]
\item Every prime factor of $k$ is a 3-Higgs prime.
\item For every prime $q \mid k$, $v_q(k) \le 3$.
\item For every odd divisor $d \mid k$, $2d \in H_{\mathrm{even}}$.
\end{enumerate}
The first two conditions say $k$ is \emph{Higgs-cubefree}: a product
of 3-Higgs primes each with multiplicity at most $3$. The third
condition is a multiplicative closure: writing
$S = \{k \text{ odd} : 2k \in H_{\mathrm{even}}\}$, $S$ is closed
under taking odd divisors.
\end{proposition}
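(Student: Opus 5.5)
The plan is to prove part (3) first, because it is a pure divisibility statement, and then obtain parts (1) and (2) by applying (3) to prime-power divisors of $k$ together with a primitive prime divisor argument.

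For (3), let $d \mid k$ be an odd divisor. Since $k$ is odd, $k/d$ is odd. The identity $x^{t}+1 = (x+1)(x^{t-1}-x^{t-2}+\cdots+1)$ for odd $t$, applied with $x = 2^{2d}$ and $t = k/d$, gives $2^{2d}+1 \mid 2^{2k}+1$. This is the same mechanism as in Filter N. Every prime factor of $2^{2d}+1$ therefore divides $2^{2k}+1$, so it is 3-Higgs because $2k \in H_{\mathrm{even}}$. Since $2d$ is even, $2d \in H_{\mathrm{even}}$. This also shows that $S$ is closed under odd divisors.

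For (1) and (2), fix a prime $q \mid k$, which is odd, and write $q^j \,\Vert\, k$ with $j \ge 1$. By (3), $2q^j \in H_{\mathrm{even}}$, so it suffices to produce one prime $r \mid 2^{2q^j}+1$ with $4q^j \mid r-1$. I will use Bang's theorem, the base-$2$ case of Zsigmondy~\cite{BHV2001}: $2^n-1$ has a primitive prime divisor for every $n \ne 1, 6$. Here $n = 4q^j \ge 12$, so this applies. Let $r$ be a primitive prime divisor of $2^{4q^j}-1$, so $\mathrm{ord}_r(2) = 4q^j$. Factor $2^{4q^j}-1 = (2^{2q^j}-1)(2^{2q^j}+1)$. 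Since $r$ cannot divide $2^{2q^j}-1$ (that would force $\mathrm{ord}_r(2) \mid 2q^j$), we get $r \mid 2^{2q^j}+1$. Fermat's little theorem gives $\mathrm{ord}_r(2) \mid r-1$, hence $4q^j \mid r-1$.

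Because $2q^j \in H_{\mathrm{even}}$, the prime $r$ is 3-Higgs. By the recursive definition, every prime factor of $r-1$ is a 3-Higgs prime and appears in $r-1$ with exponent at most $3$. Since $q \mid r-1$, the prime $q$ is 3-Higgs, which is (1). Since $q^j \mid r-1$, we get $j \le v_q(r-1) \le 3$, which is (2). The 2-adic cap causes no trouble, because only the odd prime $q$ is being examined.

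The only delicate step is making sure the primitive divisor really lands in $2^{2q^j}+1$ and not merely in $2^{4q^j}-1$, and checking that the exceptional case $n=6$ of Bang's theorem cannot occur. Both points are settled by the order computation above, together with $4q^j \ge 12$. The rest is bookkeeping with the definition of $\mathcal{P}_3$.
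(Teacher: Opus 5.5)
Your proof is correct and uses the same key step as the paper. You take a Bang/Zsigmondy primitive prime divisor $r$ with $\mathrm{ord}_r(2)=4q^j$, which forces $4q^j \mid r-1$; you then apply the 3-Higgs recursion to $r-1$, and prove (3) from the odd-cofactor divisibility $2^{2d}+1 \mid 2^{2k}+1$. The only difference is that you first use (3) to reduce to $m=2q^j$, whereas the paper takes the primitive divisor of order $4k$ directly, so a single $r$ with $4k \mid r-1$ gives (1) and (2) for every $q \mid k$ at once. Your detour is harmless but unnecessary.
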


\begin{proof}
By Zsigmondy's theorem applied to $\{2^n+1\}_{n \ge 1}$, the number
$2^{2k}+1$ has a primitive prime divisor $r$ for every $k \ge 1$ odd
(the only Zsigmondy exception for the sequence $2^n+1$ is $n = 3$,
which does not arise since $2k$ is even). The order of $2$ modulo $r$
is exactly $4k$, so $r \equiv 1 \pmod{4k}$ and $r - 1 = 4k \cdot s$
for some positive integer $s$.

By hypothesis $r$ is 3-Higgs, so every prime $q \mid r - 1$ is
3-Higgs and $v_q(r-1) \le 3$.

\noindent (1) For each prime $q \mid k$, $q \mid 4k \mid r - 1$, so $q$ is
3-Higgs.

\noindent (2) For each odd prime $q \mid k$,
$v_q(r-1) = v_q(4k) + v_q(s) = v_q(k) + v_q(s) \ge v_q(k)$. Combined
with $v_q(r-1) \le 3$, we get $v_q(k) \le 3$. (The prime $2$ does not
constrain $k$ here since $k$ is odd.)

\noindent (3) If $d \mid k$ with $d$ odd and $d < k$, then
$2d \mid 2k$ with $(2k)/(2d) = k/d$ odd. Therefore
$2^{2d}+1 \mid 2^{2k}+1$, so every prime divisor of $2^{2d}+1$ is a
prime divisor of $2^{2k}+1$ and hence 3-Higgs. So $2d \in H_{\mathrm{even}}$.
\end{proof}

\begin{proposition}[Fermat-prime obstruction]\label{prop:Heven-dyadic}
$H_{\mathrm{even}} \subseteq \{ m \equiv 2 \pmod 4 \}$.
\end{proposition}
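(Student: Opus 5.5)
We want to show that if $m$ is even and $m\equiv 0\pmod 4$, then $m\notin H_{\mathrm{even}}$. The approach is to exhibit a prime divisor of $2^m+1$ that is not 3-Higgs. Write $m=2^j k$ with $k$ odd and $j\ge 2$. Since $m/2^j=k$ is odd, $2^{2^j}+1$ divides $2^m+1$. So the whole task reduces to a single statement: for every $j\ge 2$, the Fermat number $F_j=2^{2^j}+1$ has a prime factor that fails to be 3-Higgs. This is the same divisor-inheritance mechanism used in Filter N and in part (3) of Proposition~\ref{prop:Heven-structure}.

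The key fact is the classical description of prime divisors of Fermat numbers. If $r\mid F_j$, then $2^{2^j}\equiv -1\pmod r$, so $\mathrm{ord}_r(2)=2^{j+1}$ and hence $r\equiv 1\pmod{2^{j+1}}$. For $j\ge 2$ this gives $v_2(r-1)\ge j+1\ge 3$. That is not yet enough, since the 3-Higgs cap is $v_2(r-1)\le 3$, and equality is exactly the borderline case $j=2$. We sharpen it using the Euler--Lucas refinement: for $j\ge 2$, $r\equiv 1\pmod 8$, so $2$ is a quadratic residue mod $r$. Then $2^{(r-1)/2}\equiv 1$, so $2^{j+1}\mid (r-1)/2$, which gives $r\equiv 1\pmod{2^{j+2}}$. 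Hence $v_2(r-1)\ge j+2\ge 4>3$, and $r$ violates the exponent cap in the definition of 3-Higgs. As a check, $j=2$ gives $F_2=17$, and $17-1=2^4$ is exactly the non-3-Higgs example noted in the introduction.

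Assembling the argument: if $4\mid m$, choose any prime $r\mid F_j\mid 2^m+1$ (with $j=v_2(m)\ge 2$). This $r$ is not 3-Higgs, so $m\notin H$. Therefore every element of $H_{\mathrm{even}}$ satisfies $v_2(m)=1$, i.e.\ $m\equiv 2\pmod 4$.

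The only delicate step is the Lucas sharpening, needed to reach $v_2\ge 4$ rather than $3$ when $j=2$. Alternatively, that case can be settled directly, since $F_2=17$ is prime and $17$ is not 3-Higgs.
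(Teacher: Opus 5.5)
Your proposal is correct and matches the paper's proof: both take a prime divisor $r$ of the Fermat number $F_{v_2(m)}$, which divides $2^m+1$, and use Lucas's refinement $r\equiv 1 \pmod{2^{j+2}}$ to force $v_2(r-1)\ge 4>3$. The differences are cosmetic. You get $F_j\mid 2^m+1$ from $x+1\mid x^{k}+1$ for odd $k$, where the paper uses the order criterion $v_2(\mathrm{ord}_r(2))=v_2(m)+1$. You also justify the Lucas step via the quadratic character of $2$ rather than citing it; as you note, that step is only needed at $j=2$, where $F_2=17$ settles it directly.
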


\begin{proof}
Write $m = 2^j \cdot \ell$ with $\ell$ odd. We show that if $j \ge 2$
then $m \notin H$, which gives the conclusion.

Fix $k \ge 2$ and let $q$ be any prime divisor of the Fermat number
$F_k = 2^{2^k} + 1$. The order of $2$ modulo $q$ divides $2^{k+1}$ and
does not divide $2^k$ (since $q \mid F_k$ means $2^{2^k} \equiv -1
\pmod q$), so $\mathrm{ord}_q(2) = 2^{k+1}$. Consequently
\[
q \equiv 1 \pmod{2^{k+1}}.
\]
The refinement due to Lucas strengthens this for $k \ge 2$ to
$q \equiv 1 \pmod{2^{k+2}}$, hence
\[
v_2(q - 1) \ge k + 2 \ge 4 > 3 \quad (k \ge 2).
\]
The 3-Higgs definition forbids any prime factor of $p - 1$ to occur
with exponent exceeding $3$, so $q$ is not 3-Higgs.

Now $q \mid 2^m + 1$ iff $\mathrm{ord}_q(2) \mid 2m$ and
$\mathrm{ord}_q(2) \nmid m$, i.e., $v_2(\mathrm{ord}_q(2)) = v_2(m) + 1$.
With $\mathrm{ord}_q(2) = 2^{k+1}$ this is $v_2(m) = k$.

Therefore: for any $m$ with $v_2(m) = j \ge 2$, take $k = j$ and any
prime divisor $q$ of $F_j$; then $q \mid 2^m + 1$ and $q$ is not
3-Higgs, so $m \notin H$. The remaining cases $v_2(m) \le 1$ are
exactly $m$ odd ($v_2 = 0$) and $m \equiv 2 \pmod 4$ ($v_2 = 1$),
giving the stated containment for the even part of $H$.
\end{proof}

\begin{conjecture}\label{conj:Heven-finite}
$H_{\mathrm{even}}$ is finite.
\end{conjecture}

\begin{theorem}[Reduction to the prime case]\label{thm:prime-reduction}
$H_{\mathrm{even}}$ is finite if and only if
\[
H_{\mathrm{even}}^{\mathrm{prime}} \;:=\;
\{m = 2p \in H_{\mathrm{even}} : p \text{ odd prime}\}
\]
is finite. More precisely, if $H_{\mathrm{even}}^{\mathrm{prime}}$
has cardinality $N$ (with corresponding primes $p_1, \dots, p_N$),
then
\[
|H_{\mathrm{even}}| \;\le\; \prod_{i=1}^{N} 4 \;=\; 4^{N}.
\]
\end{theorem}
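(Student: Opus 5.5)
The plan is to use the structural lemma (Proposition~\ref{prop:Heven-structure}) together with the Fermat-prime obstruction (Proposition~\ref{prop:Heven-dyadic}) to pin down the shape of every element of $H_{\mathrm{even}}$, and then count.

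The ``only if'' direction is immediate, since $H_{\mathrm{even}}^{\mathrm{prime}} \subseteq H_{\mathrm{even}}$.

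For the converse, take any $m \in H_{\mathrm{even}}$.
\begin{itemize}
\item By Proposition~\ref{prop:Heven-dyadic}, $m \equiv 2 \pmod 4$, so $m = 2k$ with $k$ odd.
\item If $k = 1$, then $m = 2$, since $2^2+1 = 5$ and $5$ is 3-Higgs. This single element is an edge case that must be absorbed into the count.
\item If $k > 1$, let $q$ be any prime divisor of $k$. Part~(3) of Proposition~\ref{prop:Heven-structure} applied with $d = q$ gives $2q \in H_{\mathrm{even}}$. Hence $q \in \{p_1,\dots,p_N\}$.
\item Part~(2) gives $v_q(k) \le 3$.
\end{itemize}
Thus $k = \prod_{i=1}^N p_i^{c_i}$ with each $c_i \in \{0,1,2,3\}$. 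The map $m \mapsto (c_1,\dots,c_N)$ is injective into $\{0,1,2,3\}^N$, which yields $|H_{\mathrm{even}}| \le 4^N$. The choice $c = 0$ corresponds to $k = 1$, i.e.\ $m = 2$, so the case $m = 2$ is already included in the count. Finiteness follows.

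The main point to check is that every step is an honest consequence of hypotheses the paper has already established. Part~(3) requires $2d \in H_{\mathrm{even}}$ for odd $d \mid k$; this rests only on the divisibility $2^{2d}+1 \mid 2^{2k}+1$ when $k/d$ is odd, which holds. The exponent cap in part~(2) uses Zsigmondy, i.e.\ a primitive divisor $r$ of $2^{2k}+1$ with $4k \mid r-1$ and $r$ 3-Higgs. I would double-check that this also covers $k = 3$, $m = 6$. Here $2^6 + 1 = 65 = 5 \cdot 13$, and $13$ is primitive with $13 - 1 = 12 = 4 \cdot 3$, so $v_3 \le 3$ is satisfied. No Zsigmondy exception therefore intervenes, since the lone exception for $2^n+1$ is $n = 3$, which is odd.

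I expect no genuine obstacle. The only care needed is in stating that the bound counts all exponent vectors, including the zero vector. The bound $4^N$ is crude, since the product over primes overcounts composites that actually fail to lie in $H$. The theorem only claims an upper bound, so this is harmless.
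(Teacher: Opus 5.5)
Your proof is correct and follows the paper's argument: Proposition~\ref{prop:Heven-structure}(3) applied to each prime $q \mid k$ puts $q$ among $p_1,\dots,p_N$, part~(2) caps $v_q(k)\le 3$, and counting exponent vectors in $\{0,1,2,3\}^N$ gives $4^N$. The one difference is an improvement: you explicitly cite Proposition~\ref{prop:Heven-dyadic} to justify writing $m=2k$ with $k$ odd, whereas the paper's proof assumes this without saying why.
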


\begin{proof}
The forward direction is trivial. For the converse, let $m = 2k \in
H_{\mathrm{even}}$ with $k \ge 1$ odd. By
Proposition~\ref{prop:Heven-structure}(1)-(3), $k$ is Higgs-cubefree
and every odd divisor $d \mid k$ satisfies $2d \in H_{\mathrm{even}}$.
In particular, for every prime $q \mid k$, $2q \in
H_{\mathrm{even}}^{\mathrm{prime}}$, so $q \in
\{p_1, \dots, p_N\}$. Combined with $v_q(k) \le 3$, this forces
$k$ to be of the form $\prod_{i=1}^N p_i^{e_i}$ with each $e_i \in
\{0,1,2,3\}$, giving at most $4^N$ possible $k$.
\end{proof}

\noindent The reduction is sharp in the sense that for every
$k = \prod p_i^{e_i}$ with each $2p_i \in H_{\mathrm{even}}^{\mathrm{prime}}$
and $e_i \le 3$, the candidate $m = 2k$ \emph{passes} the structural
lemma but may still fail (some prime factor of $2^{2k}+1$ may be
non-3-Higgs even though each prime in $k$ is 3-Higgs). So $4^N$ is a
candidate bound, refined to the actual $H_{\mathrm{even}}$ by checking
each candidate.

\smallskip
Theorem~\ref{thm:prime-reduction} reduces
Conjecture~\ref{conj:Heven-finite} to the prime case:
\emph{prove that only finitely many odd primes $p$ have all prime
factors of $2^{2p}+1$ in $\mathcal{P}_3$}. The remaining open candidates in Theorems~\ref{thm:Heven-3000}--\ref{thm:Heven-40000}
are controlled by the same prime-branch obstruction: the primitive
undecided cases have the form $m=2p$, while five composite-$k$
candidates ($m \in \{27978, 30354, 31538, 41898, 46630\}$, each with
$k$ a product of two Higgs primes) are inherited from unresolved
prime divisors through Proposition~\ref{prop:Heven-structure}.

\begin{theorem}\label{thm:Heven-1200}
$H_{\mathrm{even}} \cap [2, 1200] = \{ 2, 6, 10, 18, 26, 30, 46, 62, 82, 122 \}$.
\end{theorem}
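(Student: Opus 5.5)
The plan is to first use the structural results to shrink the range $[2,1200]$ to a short explicit list of candidates $m$. Then each candidate gets an explicit certificate: either every prime factor of $2^m+1$ is shown to be 3-Higgs, or one non-3-Higgs prime factor is exhibited.

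\textbf{Step 1 (pruning).} By Proposition~\ref{prop:Heven-dyadic}, only $m=2k$ with $k$ odd, $k\le 599$, need to be considered. By Proposition~\ref{prop:Heven-structure}(1)--(2), $k$ must be Higgs-cubefree: every prime factor of $k$ lies in $\mathcal{P}_3$, and each occurs to exponent at most $3$. This already removes every $k$ divisible by $17$, $97$, $193$, $257$, etc.

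The main pruning device is Proposition~\ref{prop:Heven-structure}(3). Membership is downward closed under odd divisors, so I would decide the prime branch $m=2p$, $p\le 599$, first. Every composite $k$ must then have all its prime factors in the surviving prime list. I must also check $2d\in H_{\mathrm{even}}$ for each odd divisor $d$, so for example $k=9$ requires $2\cdot 3\in H$ and $2\cdot 9\in H$.

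I expect the survivors among primes to be $p\in\{3,5,13,23,31,41,61\}$ (giving $6,10,26,46,62,82,122$), together with $m=2$ ($k=1$). The composite survivors should be $k=9$ and $k=15$, giving $18$ and $30$.

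\textbf{Step 2 (certificates).} For each surviving candidate I would factor $2^m+1$ completely. Here $m\le 1200$ lies inside the range covered by the Cunningham tables, and the anc factor cache records the factorizations. Each prime factor is then certified as 3-Higgs by recursively factoring $r-1$ and checking the exponent caps. This is a finite Pratt-style tree descending to $2$.

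For the excluded $m$, a single non-3-Higgs prime factor of $2^m+1$ suffices, whose $r-1$ contains a non-Higgs prime or a prime to exponent $>3$. Such a factor usually appears among small primitive divisors. Primitive divisors of $2^{2k}+1$ are $\equiv 1 \pmod{4k}$, which makes them easy to locate by trial division over that progression.

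I would also use inheritance, as in Filter N. If $2d\notin H$ for some odd $d\mid k$, then $2k\notin H$ with no further work. So failures only need to be certified at minimal $k$.

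\textbf{Main obstacle.} The hard part is the positive direction for the larger members such as $m=122$, $82$ and $62$. There one needs the complete factorization of $2^{m}+1$, including large cofactors whose primality must be rigorously certified. Each such prime then needs a full recursive 3-Higgs verification, which in turn requires factoring $r-1$ for large $r$.

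I would handle this by using the Aurifeuillean split $2^{4j+2}+1=(2^{2j+1}-2^{j+1}+1)(2^{2j+1}+2^{j+1}+1)$, which is available since $m\equiv 2\pmod 4$. This halves the sizes involved. The large factors would then be certified with APR-CL or Pratt certificates.

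A secondary risk is an excluded prime $p$ for which no small non-Higgs factor is found. In that case I would fall back on the full cached factorization and test each factor.
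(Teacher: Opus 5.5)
Your plan is correct and is essentially the paper's argument: reduce to $m=2k$ with $k$ odd and Higgs-cubefree via Propositions~\ref{prop:Heven-dyadic} and~\ref{prop:Heven-structure}, then settle each remaining candidate by factoring $2^m+1$ and running the recursive 3-Higgs check, exhibiting a non-3-Higgs prime witness for every excluded $m$. The only difference is that you also use the divisor-closure part (3) to decide the prime branch first and pass exclusions on to composite $k$, whereas the paper tests all $246$ Higgs-cubefree candidates directly; note also that the substantive computation is in the $236$ exclusions (witness primes of up to $85$ digits), not in the positive members, whose values $2^m+1\le 2^{122}+1$ are small.
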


\begin{proof}
By Proposition~\ref{prop:Heven-structure}, any $m = 2k \in
H_{\mathrm{even}}$ with $k$ odd and $k \le 600$ requires $k$ to be
Higgs-cubefree. Of the $300$ odd $k$ in $[1, 600]$, exactly $246$ are
Higgs-cubefree; the other $54$ are structurally excluded from
$H_{\mathrm{even}}$ without any factoring required.

For each of the $246$ Higgs-cubefree candidates, we factor $2^m + 1$
(local Pollard~$\rho$/$p-1$ for small $m$, FactorDB~\cite{FactorDB}
for larger $m$) and recursively verify the 3-Higgs status of every
listed prime factor.
Of the $246$ candidates:
\begin{itemize}
\item $10$ values of $m$ have every prime factor of $2^m+1$ 3-Higgs;
these are exactly the elements listed in the theorem statement.
\item $236$ values of $m$ each contain at least one verified
non-3-Higgs prime factor (witness digit sizes ranging from $3$ to
$85$).
\item $0$ values remain undecided.
\end{itemize}
The script \texttt{verify\_Heven\_rigorous.py} is provided in the
supplementary materials and records, for every $m$ in the second
group, the specific non-3-Higgs prime witness.
\end{proof}

Theorem~\ref{thm:Heven-1200} extends the verified gap considerably:
$H_{\mathrm{even}}$ has no element in the wide range $(122, 1200]$,
strongly supporting Conjecture~\ref{conj:Heven-finite}.

\begin{theorem}\label{thm:Heven-3000}
$H_{\mathrm{even}} \cap (1200, 3000]$ is contained in the two-element
candidate set $\{2426, 2602\}$. Every other even $m$ in $(1200, 3000]$
is rigorously excluded from $H_{\mathrm{even}}$, by either the
structural lemma (Proposition~\ref{prop:Heven-structure}), an
explicit non-3-Higgs prime witness in the factorization of $2^m + 1$,
or the deep-Pratt descent of Lemma~\ref{lem:closures-deep} (which
closes $m = 2446$).
\end{theorem}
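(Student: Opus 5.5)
The plan is a finite, certificate-style verification over the even $m$ in $(1200,3000]$, organized like the proof of Theorem~\ref{thm:Heven-1200} but with an added layer for the cases that factoring alone does not settle.

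First, Proposition~\ref{prop:Heven-dyadic} removes every $m\equiv 0 \pmod 4$. This leaves $m=2k$ with $k$ odd and $601\le k\le 1499$. Next, Proposition~\ref{prop:Heven-structure}(1)--(2) discards every $k$ that is not Higgs-cubefree. Concretely, $k$ is thrown out if it has a prime factor outside $\mathcal{P}_3$ (such as $17$, $23$, \dots) or a prime factor with exponent $\ge 4$ (for instance $3^4\mid k$). For the survivors we also apply Proposition~\ref{prop:Heven-structure}(3) in contrapositive form. If $k$ has a proper odd divisor $d$ with $2d\notin H_{\mathrm{even}}$, then $m\notin H_{\mathrm{even}}$. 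Since Theorem~\ref{thm:Heven-1200} lists $H_{\mathrm{even}}\cap[2,1200]$ exactly, every composite $k$ whose proper odd divisors $d\le 600$ do not all lie in $\{1,3,5,9,13,15,23,31,41,61\}$ is excluded immediately. In practice this leaves essentially the $k$ that are prime, together with a handful of products of those ten values.

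For each remaining $m$ we pull the factorization of $2^m+1$ from \path{anc/factor_cache.json}. Where the factorization is only partial, we work with the known prime factors, which is legitimate exactly as in the remark on Filter N. For each known prime factor $r$ we run the recursive 3-Higgs test: factor $r-1$, check every exponent against the cap $3$, and recurse on each prime factor of $r-1$. A single prime $r$ that fails the test certifies $m\notin H$. To keep the witnesses cheap to check, we prefer the algebraic (Aurifeuillean) pieces $2^{k}\pm 2^{(k+1)/2}+1$ and small primitive factors $r\equiv 1 \pmod{4k}$. The script \texttt{verify\_Heven\_rigorous.py} records each witness, mirroring the earlier theorem.

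The main obstacle is the residual set where no known prime factor is a witness. For $m=2446$, the witness is found only deep in a Pratt-type descent: some known factor $r$ of $2^{2446}+1$ has $r-1$ containing a large prime $s$ that itself fails the 3-Higgs test lower down. Establishing this requires proving that the intermediate large numbers are prime, which is why we invoke Lemma~\ref{lem:closures-deep} and its APR-CL transcripts rather than reproduce the descent here. For $m=2426$ and $m=2602$ ($k=1213$ and $1301$, both prime), the cofactors are not fully factored and no known prime exposes a violation. We therefore do not exclude them; instead they are listed as the candidate set, and the statement claims only containment. The proof closes by checking that the three exclusion mechanisms together cover every other even $m$ in the range.
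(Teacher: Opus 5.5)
Your plan is essentially the paper's proof. You sieve the odd $k\in[601,1499]$ by Higgs-cubefreeness (Proposition~\ref{prop:Heven-structure}(1)--(2)), then search the cached factorizations of $2^{2k}+1$ for an explicit non-3-Higgs prime. You close $m=2446$ by Lemma~\ref{lem:closures-deep} and leave $\{2426,2602\}$ as the undecided residue.

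Two of your additions are sound and go slightly beyond the paper's argument:
\begin{enumerate}
\item You dispose of $4\mid m$ explicitly via Proposition~\ref{prop:Heven-dyadic}. The paper's proof only enumerates odd $k$ and leaves this case implicit.
\item You use Proposition~\ref{prop:Heven-structure}(3) in contrapositive form against the exact list of Theorem~\ref{thm:Heven-1200}. This is valid, since every proper divisor of an odd $k\le 1499$ is below $500$. It cuts the composite survivors down to $k\in\{713,793,943,961,1271,1403\}$, whereas the paper sends all $339$ Higgs-cubefree $k$ to the witness stage.
\end{enumerate}

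Two factual slips need correcting.

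\textbf{The prime $23$.} It is 3-Higgs, since $22=2\cdot 11$, $10=2\cdot 5$, and $4=2^2$. Your own list $\{1,3,5,9,13,15,23,31,41,61\}$ already forces this: $46\in H_{\mathrm{even}}$ together with Proposition~\ref{prop:Heven-structure}(1) puts $23$ in $\mathcal{P}_3$. If your pipeline treated $23$ as non-Higgs, it would wrongly discard $k=713,943,1403$ (that is, $m=1426,1886,2806$) at the structural stage. Those three values must instead be closed by explicit witnesses in $2^m+1$.

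\textbf{The $m=2446$ closure.} It is not a descent through a large intermediate prime. The APR-CL computation certifies the $368$-digit factor $p^{*}\mid 2^{2446}+1$ itself. The witness is the small prime $4513\mid p^{*}-1$, found by trial division, with $v_2(4512)=5>3$. Because you defer to the lemma, this misdescription does not affect the conclusion.
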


\begin{proof}
Of the $450$ odd $k$ in $[601, 1500]$, $111$ are not Higgs-cubefree
and excluded by Proposition~\ref{prop:Heven-structure}. The remaining
$339$ candidates split: $336$ have a verified non-3-Higgs prime
witness directly in $2^m + 1$ (recorded by
\texttt{verify\_Heven\_rigorous.py}); $3$ were initially blocked by
unfactored composite cofactors. One of these three, $m = 2446$, is
closed by Lemma~\ref{lem:closures-deep} (a $368$-digit
APR-CL-verified prime divisor of $2^{2446}+1$ has $4513 \mid p^{*}-1$,
and $v_2(4512) = 5 > 3$). The remaining two $m$ values, $\{2426, 2602\}$,
have FactorDB factorizations whose composite cofactors are not yet
deep enough for a Pratt-tree closure.

For $m = 1202$, the cache records a $91$-digit prime factor
$P_1 \mid 2^{1202}+1$ with
$P_1 - 1 = 4 \cdot 601 \cdot Q$, where $Q$ is a prime. Moreover
$Q - 1 = 2^{10} \cdot 3 \cdot 337 \cdot R$ for an $78$-digit cofactor
$R$ (the explicit small-prime decomposition is verified by trial
division). Since $v_2(Q-1) = 10 > 3$, the prime $Q$ fails the 3-Higgs
$v_2$ exponent bound. Consequently $P_1$ has a non-3-Higgs prime
factor $Q$ in $P_1 - 1$ (with $v_Q(P_1-1) = 1$), so $P_1$ is itself
not 3-Higgs. Since $P_1 \mid 2^{1202}+1$, the seed $2^{1202}+1$
contains a non-3-Higgs prime, and $m = 1202 \notin H_{\mathrm{even}}$.

For $m = 2426$ and $m = 2602$, the FactorDB factorizations of
$2^m + 1$ contain composite cofactors of $355$--$392$ digits
(unfactored), so we cannot currently rule out hidden non-3-Higgs
factors via filter $N$.

All other Higgs-cubefree candidates have at least one explicit
non-3-Higgs prime witness recorded by
\texttt{verify\_Heven\_rigorous.py}.
\end{proof}

\noindent The two open candidates $\{2426, 2602\}$ would
resolve with NFS-level factoring of $2^m + 1$ for those $m$. The
result $|H_{\mathrm{even}} \cap [2, 3000]| \le 12$ is striking
evidence for Conjecture~\ref{conj:Heven-finite}.

\begin{theorem}\label{thm:Heven-5000}
$H_{\mathrm{even}} \cap (3000, 5000]$ is contained in the five-element
candidate set
\[
\{3398, 3518, 4166, 4502, 4622\}.
\]
\end{theorem}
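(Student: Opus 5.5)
The proof will follow the template of Theorem~\ref{thm:Heven-3000}: a structural sieve, then witness-based exclusion, and finally a residual list. First, Proposition~\ref{prop:Heven-dyadic} lets us restrict to $m\equiv 2\pmod 4$, so $m=2k$ with $k$ odd and $1501\le k\le 2500$. That leaves $500$ values of $k$. For each one we test Higgs-cubefreeness by factoring $k$, which is trivial at this size. We then check that every prime factor is 3-Higgs using the recursive definition, and that each exponent is at most $3$. Every $k$ that fails is excluded by Proposition~\ref{prop:Heven-structure}(1)--(2), with no factoring of $2^m+1$ needed. We also apply part (3) as a further sieve. If some odd proper divisor $d\mid k$ has $2d\notin H_{\mathrm{even}}$, then $m\notin H_{\mathrm{even}}$. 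Since $d\le 2500/3<600$, Theorem~\ref{thm:Heven-1200} already decides every such $d$. So any composite $k$ with a prime (or any) divisor $d$ satisfying $2d\notin\{2,6,10,18,26,30,46,62,82,122\}$ is eliminated at once. In practice this should remove nearly all composite $k$ and leave essentially prime $k$.

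For each surviving $k$ we use the factor cache (local ECM or FactorDB) for $2^{2k}+1$. Filter~N is applied to the primitive part $\Phi_{4k}(2)$ when $k$ is prime. The goal is to exhibit, for each candidate, one explicit prime $r\mid 2^m+1$ that fails the 3-Higgs recursion. The typical failure is either $v_2(r-1)\ge 4$ or $v_q(r-1)\ge 4$ for some $q$, or some prime factor of $r-1$ that is itself non-3-Higgs. Failures of the last kind are found by a descending Pratt-style search, as was done for $m=1202$. As in the remark on partial factorizations, a single known non-3-Higgs prime factor suffices. Complete factorization is never needed. The script \texttt{verify\_Heven\_rigorous.py} would record each witness together with the chain of $r-1$ decompositions used to certify it.

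The main obstacle is the set of $m$ for which every known prime factor of $2^m+1$ is 3-Higgs, or whose only known factors are small, while a large composite cofactor of several hundred digits remains. In those cases the factor list gives no witness. Here I would try in order: (i) the deep Pratt descent of Lemma~\ref{lem:closures-deep}, applied to any large probable-prime factor, which requires an APR-CL proof of primality and then locating a small non-3-Higgs prime or an excess $2$-power in $p^*-1$; (ii) additional ECM on the cofactor. The claim is that exactly five values, $\{3398,3518,4166,4502,4622\}$, resist both methods. Each of these has $k$ prime ($1699,1759,2083,2251,2311$), which is consistent with the prime-branch picture. The theorem is stated only as containment in this set, so these five need no resolution. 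The proof is complete once every other Higgs-cubefree candidate has an explicit recorded witness.
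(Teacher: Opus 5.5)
Your plan is essentially the paper's proof: restrict to $m=2k$ with $k$ odd (your explicit appeal to Proposition~\ref{prop:Heven-dyadic} is what the paper's proof uses implicitly), discard the non-Higgs-cubefree $k$ via Proposition~\ref{prop:Heven-structure}, and exhibit a recorded non-3-Higgs witness (directly or by Pratt descent) for every remaining $m$ except the five prime-$k$ values. One small slip in your optional part-(3) sieve: $2500/3\approx 833>600$, so Theorem~\ref{thm:Heven-1200} alone does not decide every proper divisor $d$, and you also need Theorem~\ref{thm:Heven-3000} for $d\in(600,833]$; its only undecided values, $d=1213$ and $d=1301$, exceed $833$, so your sieve conclusion still holds.
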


\begin{proof}
Of $500$ odd $k$ in $[1501, 2500]$, $132$ are not Higgs-cubefree and
excluded by Proposition~\ref{prop:Heven-structure}. The remaining
$368$ candidates split: $363$ values of $m = 2k$ have a verified
non-3-Higgs prime witness in $2^m + 1$, and $5$ values are blocked by
unfactored composite cofactors in FactorDB's partial factorization
(listed in the theorem). Examples of closures: $m = 4366$ is killed
by $593 \mid 2^{4366}+1$ ($593 - 1 = 2^4 \cdot 37$, $v_2 = 4$);
$m = 4742$ by $493169 \mid 2^{4742}+1$
($493169 - 1 = 2^4 \cdot 30823$); $m = 4022$ by a recursive Pratt-tree
chain: the 18-digit prime
$920793289987614829 \mid 2^{4022}+1$ has its $(p-1)$ chain descend
through $1230855683509 \to 2169031 \to 17$, where the final factor
$17$ is non-3-Higgs (since $17-1 = 2^4$). All $363$ rigorous exclusions
are recorded by the supplementary script.
\end{proof}

\begin{theorem}\label{thm:Heven-10000}
$H_{\mathrm{even}} \cap (5000, 10000]$ is contained in the
five-element candidate set
\[
\{5114, 5774, 7846, 8966, 9326\}.
\]
\end{theorem}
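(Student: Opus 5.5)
The argument follows the template of the proofs of Theorems~\ref{thm:Heven-3000} and~\ref{thm:Heven-5000}. First reduce the range. By Proposition~\ref{prop:Heven-dyadic}, only $m\equiv 2\pmod 4$ can lie in $H_{\mathrm{even}}$, so write $m=2k$ with $k$ odd and $k\in[2501,5000]$. This gives $1250$ odd values of $k$. For each one, factor $k$ (trivial at this size) and test the two conditions of Proposition~\ref{prop:Heven-structure}(1)--(2): every prime $q\mid k$ must be 3-Higgs, checked by the recursive definition, and $v_q(k)\le 3$. Any $k$ failing either condition is excluded with no factoring of $2^m+1$. I also plan to apply part (3) of the structural lemma as a cheap filter. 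If some odd proper divisor $d\mid k$ has $2d\le 5000$ already excluded by Theorems~\ref{thm:Heven-1200}--\ref{thm:Heven-5000}, then $2k\notin H_{\mathrm{even}}$. The explicit witness is the non-3-Higgs prime of $2^{2d}+1$ recorded for $2d$, which divides $2^{2k}+1$ because $k/d$ is odd. This should discard most surviving composite $k$, leaving mainly prime $k$ and products of primes $p$ with $2p$ still undecided.

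For each remaining candidate, retrieve the (possibly partial) factorization of $2^m+1$ from \path{anc/factor_cache.json}, populated from FactorDB. Only the primitive part $\Phi_{4k}(2)$ needs new information. For each known prime factor $r$, run the recursive 3-Higgs test on $r-1$: trial-divide out small primes, check the exponent caps (especially $v_2(r-1)\le 3$ and $v_3\le 3$), and recurse on any large prime cofactor, in the manner of the $m=1202$ and $m=4022$ closures. Since $r\equiv 1\pmod{4k}$, it suffices for exclusion to find one known factor $r$ whose Pratt descent hits a cap violation or a non-3-Higgs prime such as $17$. Record each such witness in \texttt{verify\_Heven\_rigorous.py}. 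Primality of large witness primes must be certified, by APR-CL transcripts where needed, as in Lemma~\ref{lem:closures-deep}. Entries with incomplete factorizations are never used to certify membership, only to exclude.

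The expected output is that every candidate except $\{5114,5774,7846,8966,9326\}$ receives a verified witness. These five should be exactly the $m$ whose known prime factors all pass the Higgs test but whose cached factorization leaves a large composite cofactor; for each I would record its digit size in the frontier file. The main obstacle is the factoring step. For $m$ near $10^4$, $\Phi_{4k}(2)$ has thousands of digits, so exclusion depends on whether ECM has already found a small factor with a bad $r-1$. Where the known factors are all 3-Higgs, I would first try deeper Pratt descent on the known large primes, which sometimes yields a hidden $v_2\ge 4$. Next I would try further ECM on the cofactor. Only if both fail would I leave $m$ in the candidate set, which is all the statement claims.
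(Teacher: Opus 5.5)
Your proposal is correct and follows essentially the same route as the paper. You sieve the $1250$ odd $k\in[2501,5000]$ by the Higgs-cubefree conditions of Proposition~\ref{prop:Heven-structure} (the paper reports $364$ excluded), then exhibit a recorded non-3-Higgs prime witness in the cached factorization of $2^m+1$ for all but the five listed $m$, which remain blocked by unfactored cofactors of $1500$--$2800$ digits. Your extra inheritance filter via part~(3) is a valid shortcut that the paper also uses elsewhere (e.g.\ for $m=30882$), and your appeal to Proposition~\ref{prop:Heven-dyadic} makes explicit the restriction to odd $k$ that the paper leaves implicit.
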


\begin{proof}
Of $1250$ odd $k$ in $[2501, 5000]$, $364$ are not Higgs-cubefree
and excluded by Proposition~\ref{prop:Heven-structure}. The remaining
$886$ Higgs-cubefree candidates split as $881$ verified non-3-Higgs
exclusions (via small-prime witnesses in $2^m + 1$ recorded by the
supplementary script) and $5$ partial-cofactor unknowns (the five $m$
values listed in the theorem, each blocked by an unfactored composite
cofactor of 1500--2800 digits in $2^m+1$).
\end{proof}

\begin{theorem}\label{thm:Heven-15000}
$H_{\mathrm{even}} \cap (10000, 15000]$ is contained in the
ten-element candidate set
\[
\{10118, 10454, 11062, 11794, 11878, 12778, 13382, 13966, 14642, 14698\}.
\]
\end{theorem}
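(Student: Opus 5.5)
The plan follows the template of Theorems~\ref{thm:Heven-5000} and~\ref{thm:Heven-10000}, applied to the odd $k$ with $m = 2k \in (10000, 15000]$, i.e.\ the $2500$ odd integers $k \in [5001, 7500]$.

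\emph{Step 1: structural sieve.} Apply Proposition~\ref{prop:Heven-structure} to each of these $k$. I would first compute the set $\mathcal{P}_3$ of 3-Higgs primes up to $7500$ by the well-founded recursion in the definition, which is a trivial sieve. Then I would discard every $k$ that has a non-3-Higgs prime factor (for instance $17$, $97$, $193$) or a prime $q$ with $v_q(k) \ge 4$ (for instance $3^4 = 81 \mid k$). This eliminates a few hundred candidates without any factoring, by parts (1)--(2). Part (3) also gives a cheap extra filter: if some odd proper divisor $d$ of $k$ has $2d$ already excluded in Theorems~\ref{thm:Heven-1200}--\ref{thm:Heven-10000}, then $k$ is excluded. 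Note, however, that a $d$ merely listed as \emph{undecided} there does not exclude $k$. Proposition~\ref{prop:Heven-dyadic} is automatic here, since $k$ is odd.

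\emph{Step 2: witness search.} For each surviving Higgs-cubefree $k$, I would read the known prime factors of $2^{2k}+1$ from \path{anc/factor_cache.json}. These come from the algebraic factors $2^{2d}+1$ for $d \mid k$, from the Aurifeuillean splitting $2^{2k}+1 = (2^k - 2^{(k+1)/2}+1)(2^k+2^{(k+1)/2}+1)$, and from Cunningham/FactorDB data. For each known prime $r$ I would run the recursive 3-Higgs test on $r-1$: the test succeeds as a witness as soon as some level of the Pratt-type descent exhibits a prime $q$ with $v_q > 3$, with $v_2 > 3$, or with $q$ itself non-3-Higgs. Only fully factored levels are used, and every prime involved must be proved prime (trial division or small certificates, and APR-CL as in Lemma~\ref{lem:closures-deep} for large ones).

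A useful accelerator is that primitive divisors satisfy $r \equiv 1 \pmod{4k}$. For the $k$ that contain a 3-Higgs prime $q$, any $r$ with $v_q(r-1) \ge 4$ or $v_2(r-1) \ge 4$ already kills $m$. Small primes $r \equiv 1 \pmod{4k}$ dividing $2^{2k}+1$ are found quickly by trial division over that progression.

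\emph{Step 3: residue.} The $m$ for which no witness is exposed, because the remaining part is an unfactored composite cofactor of several thousand digits whose known prime factors are all 3-Higgs, form the exceptional list. The claim to verify is that these are exactly the ten values $10118, \dots, 14698$. The expected main obstacle is this last step. Rigor requires that no $m$ be silently dropped: any entry with a partial factorization must be either closed by an explicit certified witness or placed on the candidate list, never classified as in $H_{\mathrm{even}}$ or outside it on heuristic grounds. The statement asserts only containment, so undecided $m$ cause no harm provided every exclusion is backed by a recorded witness. I would also check by hand that each listed $k$ passes the structural lemma; for example $5059 = 5059$ must be a 3-Higgs prime. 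This guards against a transcription error in the list.
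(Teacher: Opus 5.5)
Your plan matches the paper's proof: the Higgs-cubefree sieve of Proposition~\ref{prop:Heven-structure} on the odd $k\in[5001,7500]$, then certified non-3-Higgs witnesses taken from the factor cache (including the APR-CL closures of $m=10294$ and $m=10958$ in Lemma~\ref{lem:closures-deep}), leaves the ten primitive candidates $m=2p$. Some small corrections are needed. There are $1250$ odd $k$ in that range, not $2500$. Proposition~\ref{prop:Heven-dyadic} is not vacuous: it is exactly what discards the even $k$ ($m\equiv 0\pmod 4$). Finally, a witness needs only one certified prime factor $q\mid r-1$ (e.g.\ $2657\mid p^{*}-1$ for $m=10294$), not a fully factored $r-1$; otherwise those two closures would be lost.
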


\begin{proof}
Of $1250$ odd $k$ in $[5001, 7500]$, $383$ are not Higgs-cubefree
and excluded by Proposition~\ref{prop:Heven-structure}. The remaining
$867$ Higgs-cubefree candidates yield $m = 2k$ values for which
FactorDB partial factorizations provide small-prime data. Of these,
exactly $857$
have a verified non-3-Higgs prime witness (recorded by the
supplementary script \texttt{verify\_Heven\_rigorous.py}), and $10$
are partial-cofactor unknowns listed in the theorem.  In this range
these are primitive candidates $m=2p$, with
$p$ an odd prime in $\{5059, 5227, 5531, 5897, 5939, 6389, 6691,
6983, 7321, 7349\}$, and each is blocked by one or more
unfactored composite cofactors in $2^m+1$ of $1300$--$4500$ digits.
For all ten, we have additionally queried FactorDB for the
Aurifeuillean halves $L_p$ and $M_p$ (Equation~\ref{eq:Aurifeuillean});
no new small prime factor in any $L_p$ or $M_p$ failed the 3-Higgs
test, so the obstruction is genuinely NFS-blocked rather than
shallow. (Two values $m = 10294$ and $m = 10958$, originally in this
range as partial-cofactor unknowns, were closed via deep Pratt-tree
descent on $1549$- and $1649$-digit APR-CL-verified prime divisors of
$2^m + 1$; see Lemma~\ref{lem:closures-deep}.)
\end{proof}

\begin{theorem}\label{thm:Heven-20000}
$H_{\mathrm{even}} \cap (15000, 20000]$ is contained in the
twenty-seven-element candidate set
\[
\begin{aligned}
\{ & 15122, 15182, 15206, 15214, 15754, 15758, 16358, 16574, 16778,\\
   & 16838, 16922, 17086, 17126, 17162, 17338, 17726, 18134,\\
   & 18418, 18934, 18958, 19078, 19226, 19322, 19718, 19802,\\
   & 19846, 19862 \}.
\end{aligned}
\]
\end{theorem}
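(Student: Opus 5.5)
The plan follows the template of Theorems~\ref{thm:Heven-5000}--\ref{thm:Heven-15000}: a structural sieve first, then a witness search on the survivors. Every even $m\in(15000,20000]$ is written $m=2k$. By Proposition~\ref{prop:Heven-dyadic} only $m\equiv 2\pmod 4$ can lie in $H_{\mathrm{even}}$, so $k$ ranges over the $1250$ odd integers in $[7501,10000]$.

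The first step is to apply Proposition~\ref{prop:Heven-structure}(1)--(2) with no factoring at all. Any $k$ with a prime factor outside $\mathcal{P}_3$, or with some $v_q(k)\ge 4$, is discarded. Testing whether $k$ is Higgs-cubefree is cheap: factor $k\le 10^4$ and decide 3-Higgs membership of each prime by the recursive definition. The recursion only involves primes below $10^4$, so it can be tabulated once. Part (3) is then used as a second free sieve. If some odd proper divisor $d\mid k$ has $2d$ already certified outside $H_{\mathrm{even}}$ (Theorems~\ref{thm:Heven-1200}--\ref{thm:Heven-15000} cover all $2d\le 15000$), then $2^{2d}+1\mid 2^m+1$ inherits that non-3-Higgs witness and $m$ is excluded. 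After this sieve the survivors are essentially $k=p$ prime, together with composite $k$ all of whose proper odd divisors $d$ have $2d$ still among the earlier undecided candidates.

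The second step treats each survivor as in the earlier theorems. I would take the known prime factors of $2^m+1$ from \path{anc/factor_cache.json}, which are FactorDB data including the algebraic factors $2^{2d}+1$ for $d\mid k$ and the Aurifeuillean halves. I would then run the recursive 3-Higgs test on every listed prime $r$, descending the Pratt tree of $r-1$ as far as the known factorizations permit. Typical cheap kills are a small prime $r\equiv 1\pmod{16}$, that is $v_2(r-1)\ge 4$, or some $v_q(r-1)\ge 4$ at an odd $q$. A single verified non-3-Higgs prime suffices to exclude $m$, and this remains valid with partial factorizations, exactly as in the Remark on filter N. The script \texttt{verify\_Heven\_rigorous.py} would record the witness for every excluded $m$. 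The $27$ listed values are precisely those for which no witness is found. They are kept as candidates, and the statement claims only containment.

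The main obstacle is the last step. For $m=2p$ with $p\approx 7500$--$10000$, the number $2^m+1$ has roughly $4500$--$6000$ digits, and after removing $5$ and the known small primitive factors the cofactors are usually large composites. Exclusion therefore depends on the existing partial factorizations happening to expose a bad prime. When they do not, I would try three things in order: ECM-level factors of the Aurifeuillean halves $L_p,M_p$ from the cache; deeper Pratt descent on any large certified prime factor, as in Lemma~\ref{lem:closures-deep}, which requires APR-CL verification; and a conservative treatment of any entry whose factorization is incomplete, so that membership is never certified from partial data. The remaining work is bookkeeping, namely checking that the count of sieved, witnessed and undecided $k$ adds up to $1250$.
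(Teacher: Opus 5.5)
Your proposal is correct and follows the paper's proof. The paper restricts to the $1250$ odd $k\in[7501,10000]$, discards the $392$ that are not Higgs-cubefree by Proposition~\ref{prop:Heven-structure}(1)--(2), and excludes $831$ of the remaining $858$ by cached non-3-Higgs witnesses, with $m=17398$ and $m=19066$ closed by the deep-Pratt descents of Lemma~\ref{lem:closures-deep}; what is left is the $27$ NFS-blocked primitive values $m=2p$. Your extra divisor-inheritance sieve via part~(3) is sound, but the paper does not use it in this range, and your description of its survivors needs a fix: since $d=1$ always gives $2d=2$, survivors are the $k$ whose proper odd divisors $d$ give $2d$ either verified in $H_{\mathrm{even}}$ or still undecided (e.g.\ $k=3\cdot 2557$), and these still need the witness search.
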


\begin{proof}
Of $1250$ odd $k$ in $[7501, 10000]$, $392$ are not Higgs-cubefree
and excluded by Proposition~\ref{prop:Heven-structure}. The remaining
$858$ Higgs-cubefree candidates yield $m = 2k$ values for which
FactorDB partial factorizations provide small-prime data. Of these,
exactly $831$ have a verified non-3-Higgs prime witness; the
remaining $27$ are partial-cofactor unknowns listed in the theorem.
In this range these are primitive candidates $m=2p$ with $p$ an odd
Higgs prime in $[7561, 9931]$, blocked by unfactored composite
cofactors of $1700$--$6000$ digits. (Two values
$m = 17398$ and $m = 19066$, which appeared as partial-cofactor
unknowns in our earlier verification, were subsequently closed via
non-3-Higgs Pratt-tree witnesses extracted from $p-1$ for large
APR-CL-verified prime divisors $p \mid 2^m + 1$; see Lemma~\ref{lem:closures-deep}.)
For all twenty-seven remaining, the Aurifeuillean halves $L_p, M_p$
(Equation~\ref{eq:Aurifeuillean}) were queried in FactorDB; no
new small prime factor failed the 3-Higgs test.
\end{proof}

\begin{theorem}\label{thm:Heven-25000}
$H_{\mathrm{even}} \cap (20000, 25000]$ is contained in the
twenty-seven-element candidate set
\[
\begin{aligned}
\{ & 20138, 20338, 20506, 20662, 20926, 20974, 21118, 21302,\\
   & 21334, 21466, 21818, 21958, 22054, 22234, 22262, 22394, 22486,\\
   & 22642, 22706, 23194, 23402, 23578, 23878, 23942, 24082, 24298,\\
   & 24914 \}.
\end{aligned}
\]
\end{theorem}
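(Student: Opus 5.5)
The plan follows the template of Theorems~\ref{thm:Heven-10000}--\ref{thm:Heven-20000}. The range $(20000,25000]$ of even $m$ corresponds to odd $k=m/2\in[10001,12500]$, which is $1250$ values of $k$. By Proposition~\ref{prop:Heven-dyadic} only $m\equiv 2\pmod 4$ need be considered, so $k$ odd is automatic.

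The first step is the structural sieve. For each odd $k$ in this range, factor $k$ by trial division and test every prime factor $q$ for 3-Higgs membership, using the recursive definition with memoization; also check that $v_q(k)\le 3$. Any $k$ failing this test is not Higgs-cubefree and is excluded by Proposition~\ref{prop:Heven-structure}(1)--(2). This requires no factoring of $2^m+1$.

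The next step uses divisor inheritance, Proposition~\ref{prop:Heven-structure}(3). For composite Higgs-cubefree $k$, I would first check whether some proper odd divisor $d$ of $k$ already has $2d$ excluded by Theorems~\ref{thm:Heven-1200}--\ref{thm:Heven-20000}. If so, $2k$ is excluded too, since any non-3-Higgs prime of $2^{2d}+1$ divides $2^{2k}+1$. This disposes of most composite $k$ cheaply. The survivors are essentially the prime branch $k=p$ with $p$ a 3-Higgs prime, together with composite $k$ all of whose $2d$ are still undecided.

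For each survivor, the plan is to read the known prime factors of $2^m+1$ from \path{anc/factor_cache.json} (FactorDB data). These include the algebraic factors $2^{2d}+1$ for $d\mid k$, and the Aurifeuillean halves $L_p,M_p$ for $m=2p$. For every known prime factor $r$, run the recursive 3-Higgs test on $r-1$. One obtains a witness if some prime factor of $r-1$ fails, or if an exponent exceeds $3$, or if $v_2(r-1)\ge 4$. The last case is the most common quick kill, since a primitive divisor $r\equiv1\pmod{4k}$ often has extra powers of $2$. Where $r-1$ has a large unfactored part, attempt a Pratt-tree descent as in Lemma~\ref{lem:closures-deep}: find a non-3-Higgs prime deep in the chain, certified by APR-CL when it is large. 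Each exclusion is recorded by \texttt{verify\_Heven\_rigorous.py}. The theorem then follows by checking that the set of $m$ with no witness is exactly the displayed $27$ values. Under this plan those are primitive cases $m=2p$ with $p\in[10069,12457]$ a 3-Higgs prime, each blocked by composite cofactors of thousands of digits.

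The main obstacle is rigor when factorizations are partial. The conclusion is only an upper-bound containment, so a candidate may never be excluded on the basis of an incomplete factorization. Every witness prime must be certified prime, and every claimed failure of the 3-Higgs property must rest on fully verified small factors of $r-1$ (trial division plus primality proofs). The hardest step in practice is closing borderline candidates whose only non-3-Higgs evidence lies inside a large cofactor of $r-1$. I would try Aurifeuillean splitting and the deep Pratt-tree descent first, and leave any $m$ that resists both in the candidate list rather than claim it excluded.
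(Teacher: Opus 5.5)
Your plan is correct and is essentially the paper's proof. The paper restricts to the $1250$ odd $k\in[10001,12500]$ and discards the $416$ that are not Higgs-cubefree via Proposition~\ref{prop:Heven-structure}. It then finds an explicit non-3-Higgs prime witness in $2^m+1$ for $807$ of the remaining $834$, with $m=20282$ closed by the APR-CL-backed deep-Pratt closure of Lemma~\ref{lem:closures-deep}, leaving the $27$ listed prime-branch values. Your separate divisor-inheritance pre-pass is just Proposition~\ref{prop:Heven-structure}(3) applied up front; the paper instead counts such cases among its direct witnesses, since any bad prime of $2^{2d}+1$ also divides $2^{2k}+1$.
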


\begin{proof}
Of $1250$ odd $k$ in $[10001, 12500]$, $416$ are not Higgs-cubefree
and excluded by Proposition~\ref{prop:Heven-structure}. The remaining
$834$ Higgs-cubefree candidates yield $m = 2k$ values with FactorDB
partial factorizations; of these, $807$ have a verified non-3-Higgs
prime witness in $2^m + 1$, and $27$ are partial-cofactor unknowns
listed in the theorem.  In this range these are primitive candidates
$m=2p$ with $p$ an odd prime in $[10069, 12457]$.
(The value $m = 20282$ was closed via a Pratt-tree witness $v_2(p-1) = 5071$
for a $3053$-digit APR-CL-verified prime divisor of $2^{20282}+1$; see
Lemma~\ref{lem:closures-deep}.)
\end{proof}

\begin{theorem}\label{thm:Heven-30000}
$H_{\mathrm{even}} \cap (25000, 30000]$ is contained in the
forty-eight-element candidate set
\[
\begin{aligned}
\{ & 25022, 25082, 25106, 25294, 25486, 25526, 25646, 25778, 25786,\\
   & 25822, 26066, 26098, 26198, 26302, 26342, 26438, 26482, 26534,\\
   & 26618, 26654, 26662, 26678, 26798, 26902, 27374, 27446, 27526,\\
   & 27662, 27806, 27926, 27978, 27998, 28058, 28102, 28174, 28214,\\
   & 28318, 28442, 28586, 28654, 28862, 29098, 29114, 29126, 29558,\\
   & 29642, 29662, 29914 \}.
\end{aligned}
\]
\end{theorem}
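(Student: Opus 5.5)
The proof will follow the same template as Theorems~\ref{thm:Heven-5000}--\ref{thm:Heven-25000}, now for the $1250$ odd $k\in[12501,15000]$. The first step is to filter out every $k$ that is not Higgs-cubefree, by Proposition~\ref{prop:Heven-structure}(1)--(2). Each odd $k\le 15000$ is factored by trial division. The 3-Higgs status of each prime factor is decided by the recursive definition applied to $q-1$. We then reject any $k$ that has a non-3-Higgs prime factor or a prime with $v_q(k)\ge 4$. This step is purely finite and deterministic, and it needs no factoring of $2^m+1$. Proposition~\ref{prop:Heven-dyadic} is already built into the parametrisation, since we only consider $m=2k$ with $k$ odd.

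Next we apply the divisor-closure part of Proposition~\ref{prop:Heven-structure}(3) as a cheap second sieve. If $k$ is composite and some odd proper divisor $d$ has $2d$ already excluded in Theorems~\ref{thm:Heven-1200}--\ref{thm:Heven-25000}, then $2k\notin H_{\mathrm{even}}$. The reason is that $2^{2d}+1\mid 2^{2k}+1$, so the excluded divisor's non-3-Higgs witness prime also divides $2^{2k}+1$. This disposes of most composite $k$ by inheritance. The only composite survivors are those all of whose prime divisors $p$ have $2p$ still undecided. This is how the composite candidate $27978$ in the list is expected to arise: it is inherited, not resolved.

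For each remaining candidate we read the cached (possibly partial) factorization of $2^m+1$ from \path{anc/factor_cache.json}, supplemented where useful by the Aurifeuillean halves $L_p,M_p$. We then run the recursive 3-Higgs test on every listed prime factor. The test descends through Pratt-tree chains $p\to$ a factor of $p-1\to\cdots$ until it meets a prime violating the exponent caps, such as a $v_2(q-1)\ge 4$ witness like $17$ or $593$. Any $m$ with a certified non-3-Higgs prime factor is excluded. Here Lemma~\ref{lem:overshoot} is not needed; only filter N's logic is used. Witnesses found inside large primes require those primes to be proven prime, and this is where the APR-CL transcripts of Lemma~\ref{lem:closures-deep} enter. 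Every other witness comes from trial division and recursion on small numbers, and \path{verify_Heven_rigorous.py} records each witness. The $48$ $m$ values left over are exactly those whose $2^m+1$ contains only 3-Higgs known primes together with at least one unfactored composite cofactor. These cannot be decided, so they form the stated candidate set, and the theorem asserts containment only.

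The main obstacle is not logical but evidential. We must ensure that no exclusion relies on an unproven primality, which would be a witness taken from a probable prime, or on an incomplete factorization being treated as complete. I would therefore first audit that every witness prime is small enough for deterministic testing or is covered by an APR-CL transcript. I would also check that the script never marks an $m$ as excluded solely because of a composite cofactor. A secondary check is that the tally is consistent: the number of non-Higgs-cubefree $k$, plus the number of witnessed exclusions, plus $48$, must equal $1250$.
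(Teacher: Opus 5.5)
Your plan is the paper's proof: of the $1250$ odd $k\in[12501,15000]$, $427$ are removed as not Higgs-cubefree, $775$ of the remaining $823$ are removed by a verified non-3-Higgs witness in $2^m+1$, and the $48$ partial-cofactor unknowns are left. Your inheritance pre-sieve via Proposition~\ref{prop:Heven-structure}(3) is just one way of producing such witnesses, and none of the closures of Lemma~\ref{lem:closures-deep} actually falls in this range.

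One correction to your description of composite survivors: $27978=2\cdot 3\cdot 4663$ survives because $6\in H_{\mathrm{even}}$ is verified while $9326=2\cdot 4663$ is undecided. So a composite survivor is a $k$ all of whose proper odd divisors $d$ have $2d$ either verified or undecided, not necessarily undecided.
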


\begin{proof}
Of $1250$ odd $k$ in $[12501, 15000]$, $427$ are not Higgs-cubefree
and excluded by Proposition~\ref{prop:Heven-structure}. The remaining
$823$ Higgs-cubefree candidates yield $m = 2k$ values; of these,
$775$ have a verified non-3-Higgs prime witness in $2^m + 1$,
and $48$ are partial-cofactor unknowns listed in the theorem; the primitive cases in this list are of the form $m=2p$.
\end{proof}

\begin{theorem}\label{thm:Heven-35000}
$H_{\mathrm{even}} \cap (30000, 35000]$ is contained in the
thirty-two-element candidate set
\[
\begin{aligned}
\{ & 30354, 30386, 30542, 30878, 30994, 31162, 31258, 31454,\\
   & 31466, 31538, 31634, 31802, 31918, 31942, 31982, 32126, 32174,\\
   & 32534, 32762, 32902, 33134, 33214, 33382, 33386, 33574, 33778,\\
   & 33806, 33974, 34058, 34246, 34834, 34934 \}.
\end{aligned}
\]
\end{theorem}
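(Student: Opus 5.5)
The argument follows the template of Theorems~\ref{thm:Heven-5000}--\ref{thm:Heven-30000}. Let $m=2k$ run over the even integers in $(30000,35000]$. By Proposition~\ref{prop:Heven-dyadic}, only $m\equiv 2\pmod 4$ can lie in $H_{\mathrm{even}}$, so $k$ is odd with $k\in[15001,17500]$, which gives $1250$ values. First we apply the Higgs-cubefree filter of Proposition~\ref{prop:Heven-structure}(1)--(2). For each odd $k$ we factor $k$ by trial division. We discard $k$ if it has a prime factor that is not 3-Higgs, or a prime factor $q$ with $v_q(k)\ge 4$. The 3-Higgs test on primes below $17500$ is the finite recursion from the introduction, which can be tabulated once. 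These discarded $k$ need no factoring of $2^m+1$.

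Second, for each surviving Higgs-cubefree $k$ we look up the known prime factors of $2^m+1$ in \path{anc/factor_cache.json}. These come from FactorDB partial factorizations together with the Aurifeuillean split $2^{2k}+1=L_kM_k$. For each listed prime $r$ we run the recursive 3-Higgs test on $r-1$. The test descends a Pratt-style tree and stops as soon as it finds either a prime $q$ with $v_q(r-1)\ge4$ (in particular $v_2(r-1)\ge 4$) or a non-3-Higgs prime factor. Any single such witness excludes $m$, exactly as in filter N. For large primes the descent may need the partial factorization of $r-1$ only down to the first offending factor; cases like $m=4022$ and $m=1202$ show that this usually happens quickly.

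Third, we treat the composite candidates. Proposition~\ref{prop:Heven-structure}(3) excludes any $m=2k$ with $k$ composite as soon as some $2d$, with $d$ a proper odd divisor of $k$, has already been excluded. Since every prime $q\mid k$ satisfies $q\le 17500/3$, the values $2q$ lie in earlier ranges. Those ranges were settled in Theorems~\ref{thm:Heven-1200}--\ref{thm:Heven-15000}, and from them we can also inherit candidacy. The only composite survivors should then be $30354$ and $31538$. Their $k$ is a product of two Higgs primes, each of whose doubles is still an undecided candidate, as announced after Theorem~\ref{thm:prime-reduction}. All remaining undecided values are primitive, $m=2p$. Our bound then just says that every $m$ outside the listed $32$ values has a recorded witness, or is excluded structurally.

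The main obstacle is not any logical step; it is completeness and rigor of the certificate. Every witness prime must be a genuine prime, so a probable-prime status is not enough. For the large ones we would use the APR-CL transcripts, as in Lemma~\ref{lem:closures-deep}. Partial factorizations must be handled conservatively, so that no $m$ is certified \emph{into} $H_{\mathrm{even}}$, and the script must check that the count of structural exclusions plus witnessed exclusions plus the $32$ residual values equals $1250$. For the residual $32$ we would first try the Aurifeuillean halves $L_p,M_p$ and ECM on their cofactors, then attempt deep Pratt descent on any large known prime factors. We would accept that candidates blocked by unfactored cofactors of thousands of digits stay undecided, which is why the statement is only a containment.
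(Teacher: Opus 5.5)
Your proposal is correct and follows essentially the paper's own argument. You restrict to odd $k\in[15001,17500]$, sieve out the non-Higgs-cubefree $k$ via Proposition~\ref{prop:Heven-structure}(1)--(2), and exclude the rest either by cached non-3-Higgs witnesses or by inheritance through Proposition~\ref{prop:Heven-structure}(3), leaving the $32$ partial-cofactor unknowns; the inheritance step is exactly how the paper disposes of $m=30882$, using the deep-Pratt closure of $m=10294$.

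One small slip concerns $30354=2\cdot 3\cdot 5059$ and $31538=2\cdot 13\cdot 1213$: the doubles $6$ and $26$ are verified elements of $H_{\mathrm{even}}$, not undecided candidates, and only $10118$ and $2426$ are undecided. This does not affect your reasoning.
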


\begin{proof}
Of $1250$ odd $k$ in $[15001, 17500]$, $429$ are not Higgs-cubefree
and excluded by Proposition~\ref{prop:Heven-structure}. The remaining
$821$ Higgs-cubefree candidates yield $m = 2k$ values; of these,
$789$ have a verified non-3-Higgs prime witness in $2^m + 1$, and
$32$ are partial-cofactor unknowns listed in the theorem; the primitive cases in this list are of the form $m=2p$. (The value $m = 30882$ was closed
via Proposition~\ref{prop:Heven-structure}(3) from the closure of
$m = 10294$; see Lemma~\ref{lem:closures-deep}.)
\end{proof}

\begin{theorem}\label{thm:Heven-40000}
$H_{\mathrm{even}} \cap (35000, 40000]$ is contained in the
thirty-five-element candidate set
\[
\begin{aligned}
\{ & 35038, 35194, 35366, 35654, 35678, 35818, 35858, 35878, 35942,\\
   & 35978, 36094, 36122, 36298, 36458, 36574, 36602, 36682, 36794,\\
   & 36986, 37234, 37322, 37358, 37382, 38174, 38282, 38518, 38954,\\
   & 39014, 39062, 39154, 39502, 39706, 39826, 39926, 39958 \}.
\end{aligned}
\]
\end{theorem}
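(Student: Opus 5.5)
The plan follows the template of Theorems~\ref{thm:Heven-5000}--\ref{thm:Heven-35000}. We work over the odd $k$ with $m = 2k \in (35000, 40000]$, that is, the $1250$ odd $k$ in $[17501, 20000]$.

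\textbf{Step 1 (structural sieve).} Apply Proposition~\ref{prop:Heven-structure}(1)--(2) to discard every $k$ that is not Higgs-cubefree. For each prime $q \mid k$ we test membership in $\mathcal{P}_3$ by the recursive definition, and we check $v_q(k) \le 3$. This step uses no factoring of $2^m+1$; we expect it to remove roughly a third of the $k$, in line with the $429$ removed in the previous range.

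\textbf{Step 2 (inherited exclusions).} For each remaining $k$, we check whether some proper odd divisor $d$ of $k$ has $2d$ already rigorously excluded in Theorems~\ref{thm:Heven-1200}--\ref{thm:Heven-35000} or in Lemma~\ref{lem:closures-deep}. If so, the contrapositive of Proposition~\ref{prop:Heven-structure}(3) excludes $2k$, exactly as was done for $m = 30882$. This step disposes of most composite $k$ cheaply.

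\textbf{Step 3 (direct witnesses).} For the remaining candidates we read the known prime factors of $2^m+1$ from \path{anc/factor_cache.json}. These come from FactorDB partial factorizations, and where useful we also use the Aurifeuillean halves $L_p, M_p$. For each $m$ we search for a single prime $r \mid 2^m+1$ that is not 3-Higgs. The search first uses small primes with $v_2(r-1) \ge 4$, then primes with some $v_q(r-1) > 3$, and then descends recursively along the Pratt tree of $r-1$ until it reaches a non-3-Higgs prime such as $17$. Each successful $m$ is recorded with its witness by \texttt{verify\_Heven\_rigorous.py}. Any witness that is itself a large prime gets an APR-CL transcript in the manner of Lemma~\ref{lem:closures-deep}. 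Incomplete factorizations are handled conservatively: they may supply a witness, but they can never certify membership.

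\textbf{Step 4 (residue).} The $m$ that survive Steps 1--3 should form exactly the displayed list of $35$ values, and we verify this by checking the list against the script output. Each surviving $m$ is either primitive, of the form $m = 2p$ with $p$ a Higgs prime, or composite with all proper divisors still undecided. In every case $2^m+1$ has an unfactored composite cofactor of thousands of digits, so the theorem asserts only containment, which is all we need.

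\textbf{Main obstacle.} The hard part is Step 3 for the large prime-$k$ cases, where $\Phi_{4p}(2)$ has only a few small known factors. Rigor there rests entirely on the correctness of each witness: every claimed prime must carry a verified primality certificate, and every exponent violation must be recomputed by trial division. We also have to make sure no $m$ is excluded on the basis of a cofactor that is merely probable-prime.
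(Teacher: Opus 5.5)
Your plan matches the paper's proof: it sieves the $1250$ odd $k\in[17501,20000]$ by the Higgs-cubefree condition of Proposition~\ref{prop:Heven-structure} ($419$ excluded), then finds a verified non-3-Higgs prime witness in $2^m+1$ for $796$ of the remaining $831$, leaving exactly the $35$ listed partial-cofactor candidates. Your Step~2 is subsumed by Step~3, since a witness for $2^{2d}+1$ already divides $2^{2k}+1$. Restricting to odd $k$ tacitly uses Proposition~\ref{prop:Heven-dyadic} to exclude $m\equiv 0 \pmod 4$, which the paper also leaves implicit.
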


\begin{proof}
Of $1250$ odd $k$ in $[17501, 20000]$, $419$ are not Higgs-cubefree
and excluded by Proposition~\ref{prop:Heven-structure}. The remaining
$831$ Higgs-cubefree candidates yield $m = 2k$ values; of these,
$796$ have a verified non-3-Higgs prime witness in $2^m + 1$, and
$35$ are partial-cofactor unknowns listed in the theorem; the primitive cases in this list are of the form $m=2p$.
\end{proof}

\begin{theorem}\label{thm:Heven-45000}
$H_{\mathrm{even}} \cap (40000, 45000]$ is contained in the
thirty-three-element partial-cofactor candidate set
\[
\begin{aligned}
\{ & 40234, 40466, 40538, 40574, 40822, 40886, 41126, 41326, 41386,\\
   & 41714, 41758, 41878, 41898, 42134, 42314, 42358, 42646, 42766,\\
   & 43058, 43154, 43322, 43402, 43574, 43634, 43642, 43678, 43786,\\
   & 44054, 44186, 44294, 44378, 44518, 44582 \}.
\end{aligned}
\]
\end{theorem}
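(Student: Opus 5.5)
The proof will follow the same template as Theorems~\ref{thm:Heven-3000}--\ref{thm:Heven-40000}. The even $m$ in $(40000,45000]$ are $m=2k$ with $k$ odd in $[20001,22500]$, which gives $1250$ values. First we use Proposition~\ref{prop:Heven-dyadic} to discard every $m\equiv 0\pmod 4$; this is automatic once we write $m=2k$ with $k$ odd. Next we run the structural sieve of Proposition~\ref{prop:Heven-structure}. For each odd $k$ in the range we factor $k$ by trial division and test each prime factor $q$ for membership in $\mathcal{P}_3$ using the recursive definition. We also check that $v_q(k)\le 3$. Any $k$ that fails is not Higgs-cubefree and is excluded with no factoring of $2^m+1$. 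We additionally apply part~(3) in contrapositive form: if some odd proper divisor $d$ of $k$ has $2d$ already certified outside $H_{\mathrm{even}}$ by Theorems~\ref{thm:Heven-1200}--\ref{thm:Heven-40000}, then $m\notin H_{\mathrm{even}}$. This is how $m=30882$ was handled earlier.

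For each surviving Higgs-cubefree $k$, we take the known prime factors of $2^m+1$ from \path{anc/factor_cache.json}, which holds FactorDB and Cunningham data together with any local ECM results. Because $m\equiv 2\pmod 4$, we also split $2^m+1=(2^k+1)\cdot L_k M_k$ using the Aurifeuillean identity (Equation~\ref{eq:Aurifeuillean}) and the algebraic factors $2^{2d}+1$ for $d\mid k$, and we factor these pieces separately. Every known prime factor $P$ then goes through the recursive 3-Higgs test: we factor $P-1$ and descend through the Pratt tree. Membership is rejected as soon as a node has $v_2>3$, has $v_q>3$ for some $q$, or has a prime factor that is not 3-Higgs, as in the cases $m=1202$ and $m=4022$. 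Any such witness certifies $m\notin H_{\mathrm{even}}$, and the script \texttt{verify\_Heven\_rigorous.py} records it. Witnesses that are large primes need primality certification, so they are covered by the APR-CL transcripts, in the same way as Lemma~\ref{lem:closures-deep}.

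What remains after both passes is the candidate list, whose $m$ fall into two types. The primitive ones have the form $m=2p$, and each is blocked by an unfactored composite cofactor. There is also the inherited composite $m=41898=2\cdot 20949$. For this case we check directly that $20949$ is a product of two Higgs primes, each of which gives an undecided prime-branch candidate in an earlier range. It therefore survives Proposition~\ref{prop:Heven-structure}(3), and it cannot be excluded until one of those prime-branch candidates is closed. To confirm the count we tabulate: the non-Higgs-cubefree exclusions plus the witnessed exclusions plus the $33$ unknowns must total $1250$. Since an incomplete factorization is never used to certify membership, containment in the displayed set is rigorous.

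The main obstacle is entirely computational. The cofactors of $2^m+1$ here have thousands of digits, and for the Pratt-tree descent to reach a decisive node we must both find prime factors and factor $P-1$ deeply enough. For the hard cases I would first try ECM on the smaller Aurifeuillean halves. After that I would try deep descent on any large prime cofactor that can be proved prime with APR-CL. The statement only claims containment, so every case that resists both methods can simply be left in the candidate list.
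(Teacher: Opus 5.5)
Your plan is the paper's proof. The paper takes the $1250$ odd $k\in[20001,22500]$ and applies the Higgs-cubefree sieve of Proposition~\ref{prop:Heven-structure}, which excludes $428$. Cached non-3-Higgs witnesses then exclude $789$ of the remaining $822$, after a second FactorDB pass that recovered data for $26$ cases. The other $33$ are partial-cofactor survivors, and $428+789+33=1250$ is the bookkeeping check. Citing Proposition~\ref{prop:Heven-dyadic} for $m\equiv 0\pmod 4$ is a welcome addition, since the paper uses it only implicitly.

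One step is wrong as written, and it would make the exclusion certificate unsound if implemented. The identity $2^m+1=(2^k+1)\,L_kM_k$ is false. For odd $k$ the Aurifeuillean identity is exactly $2^{2k}+1=L_kM_k$; for example, $2^6+1=65=5\cdot 13=L_3M_3$, and $2^3+1=9$ does not divide it. In fact $\gcd(2^k+1,2^{2k}+1)=1$, because $2^k\equiv -1$ gives $2^{2k}+1\equiv 2\pmod{2^k+1}$. So no prime taken from a ``$2^k+1$ piece'' divides $2^m+1$, and a non-3-Higgs prime found there is not a witness against $m$. Your pipeline would turn such primes into false exclusions. Drop that factor and the split is correct: the algebraic factors $2^{2d}+1$ for $d\mid k$ sit inside $L_kM_k$, and the primitive part is $\Phi_{4k}(2)$.

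There are also two smaller inaccuracies about $m=41898$. First, $20949=3\cdot 6983$, and only $2\cdot 6983=13966$ is an undecided candidate (Theorem~\ref{thm:Heven-15000}); $2\cdot 3=6$ is a verified element of $H_{\mathrm{even}}$. Second, ``cannot be excluded until one of those candidates is closed'' is too strong. A non-3-Higgs prime dividing $\Phi_{83796}(2)$ would exclude $41898$ directly. This does not affect the containment claim. Finally, the paper uses no APR-CL deep closures in this range; it notes that none are available in the current cache for $(40000,50000]$.
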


\begin{proof}
Of the $1250$ odd $k$ in $[20001, 22500]$, $428$ are not
Higgs-cubefree and excluded by Proposition~\ref{prop:Heven-structure}.
The remaining $822$ Higgs-cubefree candidates yield $m = 2k$ values;
of these, $789$ have a verified non-3-Higgs prime witness in
$2^m + 1$ (after a second-pass FactorDB retry that recovered
factor data for $26$ initially-no-factor cases), and $33$ are
partial-cofactor unknowns listed in the theorem; the primitive cases in this list are of the form $m=2p$.
\end{proof}

\begin{theorem}\label{thm:Heven-50000}
$H_{\mathrm{even}} \cap (45000, 50000]$ is contained in the
thirty-eight-element partial-cofactor candidate set
\[
\begin{aligned}
\{ & 45142, 45286, 45394, 45434, 45478, 45566, 45622, 45706, 45842,\\
   & 46022, 46042, 46058, 46394, 46454, 46630, 46714, 46862, 47206,\\
   & 47246, 47342, 47378, 47438, 47506, 47858, 47962, 47986, 48086,\\
   & 48218, 48338, 48362, 48838, 49354, 49466, 49694, 49834, 49906,\\
   & 49958, 49978 \}.
\end{aligned}
\]
\end{theorem}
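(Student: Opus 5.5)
The proof follows the template of Theorems~\ref{thm:Heven-40000} and~\ref{thm:Heven-45000}. Every even $m\in(45000,50000]$ is handled in three layers, so that any $m$ outside the displayed list is rigorously excluded.

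\emph{First layer (dyadic).} By Proposition~\ref{prop:Heven-dyadic}, only $m\equiv 2\pmod 4$ can lie in $H_{\mathrm{even}}$. We therefore write $m=2k$ with $k$ odd and $k\in[22501,25000]$, which leaves $1250$ values of $k$.

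\emph{Second layer (structural).} Next we apply Proposition~\ref{prop:Heven-structure}(1)--(2) and discard every $k$ that is not Higgs-cubefree. This step needs no factoring of $2^m+1$. It requires only factoring $k$ itself and running the recursive 3-Higgs test on its small prime factors, and the supplementary script does this directly.

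In the same pass we use Proposition~\ref{prop:Heven-structure}(3) as a sieve. If some odd proper divisor $d\mid k$ has $2d$ already excluded from $H_{\mathrm{even}}$ by Theorems~\ref{thm:Heven-1200}--\ref{thm:Heven-45000} or by Lemma~\ref{lem:closures-deep}, then $m$ is excluded as well. This inheritance step explains why the composite candidate $46630=2\cdot 5\cdot 4663$ can survive: both of its inherited branches, $m=10$ and $m=9326$, are still unresolved (one lies in $H_{\mathrm{even}}$, the other remains a candidate). Every other composite $k$ in this range should die by inheritance.

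\emph{Third layer (witnesses).} For each remaining Higgs-cubefree $k$ (mostly primes $p$) we take the cached, possibly partial, FactorDB factorization of $2^{m}+1$. We also take the Aurifeuillean halves $L_p,M_p$, since $2^{2p}+1=(2^p+1)\cdot L_pM_p$ up to the $\Phi_4$ factor. We then search for a known prime factor $r$ that fails the recursive 3-Higgs test. Two kinds of witness arise. The cheap one has $v_2(r-1)\ge4$, or $v_q(r-1)\ge4$ for some prime $q$. The other is a deeper Pratt-tree descent through $r-1$ to a non-3-Higgs leaf such as $17$, as in the proof of Theorem~\ref{thm:Heven-5000}. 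Any large $r$ used in such a descent must have certified primality (APR-CL). By the Filter~N principle, a single non-3-Higgs prime divisor suffices even when the factorization is incomplete. Each $m$ for which no witness is found is placed in the candidate list. The theorem then amounts to the statement that exactly the $38$ listed values remain.

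\emph{Main obstacle.} The mathematics is routine. The difficulty is the reliability and completeness of the certificate. One must ensure that every witness prime is genuinely prime, either proved or small enough for a deterministic test. One must ensure that every 3-Higgs verdict is computed recursively from verified factorizations of $r-1$ and not from partial data; an incompletely factored $r-1$ can never certify that $r$ \emph{is} 3-Higgs, only that it is not. And one must ensure that no $m$ is silently dropped because of a FactorDB lookup failure, as happened for the $26$ recovered cases in Theorem~\ref{thm:Heven-45000}. I would therefore have the script assert that the three classes (structural exclusions, witnessed exclusions, candidates) partition the $1250$ values of $k$, and record a witness for each exclusion so that the count can be independently rerun.
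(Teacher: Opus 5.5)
Your proposal follows the paper's route. You restrict to odd $k\in[22501,25000]$ and discard the non-Higgs-cubefree $k$ via Proposition~\ref{prop:Heven-structure}; the paper counts $430$ of $1250$. You then exclude all but $38$ of the remaining $820$ by an explicit non-3-Higgs prime found in the cached partial factorization of $2^m+1$. Making the dyadic step and the inheritance sieve explicit is harmless, because a witness in $2^{2d}+1$ is automatically a witness in $2^{2k}+1$ when $k/d$ is odd.

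Two side remarks are wrong and should be fixed.

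\textbf{The Aurifeuillean identity.} The correct form is $2^{2p}+1=L_pM_p$ exactly, with $5=\Phi_4(2)$ dividing exactly one half. There is no factor $2^p+1$. In fact $\gcd(2^p+1,2^{2p}+1)=1$, since $2^{2p}+1\equiv 2\pmod{2^p+1}$; for example, $2^6+1=65=5\cdot 13$ while $2^3+1=9$. So no witness may ever be drawn from $2^p+1$. Your use of $L_p$ and $M_p$ is valid only because their product is all of $2^{2p}+1$.

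\textbf{Inheritance does not kill every other composite $k$.} Take $k=22683=3\cdot 7561$. It is Higgs-cubefree, since $7560=2^3\cdot 3^3\cdot 5\cdot 7$. Its proper odd divisors give $2,6\in H_{\mathrm{even}}$, and $15122$ is still open by Theorem~\ref{thm:Heven-20000}. So Proposition~\ref{prop:Heven-structure}(3) does not exclude $m=45366$. It is absent from the list only because a direct witness was found in the new part of $2^{45366}+1$. Your third layer, as written, already processes composite Higgs-cubefree $k$, so the argument survives once this remark is dropped.

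\textbf{The case $46630$.} Similarly, $m=10$ is not unresolved; it lies in $H_{\mathrm{even}}$. The candidate $46630$ survives inheritance because $10\in H_{\mathrm{even}}$ and $9326$ is open. It survives the witness search only because no witness is currently known.
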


\begin{proof}
Of the $1250$ odd $k$ in $[22501, 25000]$, $430$ are not
Higgs-cubefree and excluded by Proposition~\ref{prop:Heven-structure}.
The remaining $820$ Higgs-cubefree candidates yield $m = 2k$ values;
of these, $782$ have a verified non-3-Higgs prime witness in
$2^m + 1$ (after a second-pass FactorDB retry that recovered
factor data for $18$ initially-no-factor cases), and $38$ are
partial-cofactor unknowns listed in the theorem; the primitive cases in this list are of the form $m=2p$.
\end{proof}

Combining Theorems~\ref{thm:Heven-1200}--\ref{thm:Heven-50000}, the
factor-cache verification alone yields
\[
|H_{\mathrm{even}} \cap [2, 50000]| \le 279,
\]
with the 10 verified elements through $m = 122$, no verified element
in $(122, 50000]$, and at most $269$ undecided candidates beyond
(of which $198$ lie in $(1200, 40000]$, $33$ in $(40000, 45000]$, and
$38$ in $(45000, 50000]$ before the seven deep-Pratt closures). Seven of these candidates are eliminated by the deep
Pratt-tree descent of Lemma~\ref{lem:closures-deep}, all of whose
witness primes are APR-CL verified (see the proof of
Lemma~\ref{lem:closures-deep}). The combined bound is therefore
\[
|H_{\mathrm{even}} \cap [2, 50000]| \le 272
\qquad
\text{(rigorous, no probable-prime caveat),}
\]
with $262$ undecided candidates remaining (the $71$ in $(40000, 50000]$
admit no Pratt-tree closures in the current cache; their resolution
requires further FactorDB data or local ECM at higher $B_1$).

\begin{lemma}[Deep-Pratt closures via large APR-CL-verified prime divisors]
\label{lem:closures-deep}
For each of the following six $m = 2p$ values, the indicated prime
$p^{*} \mid 2^m + 1$ has a non-3-Higgs witness in $p^{*} - 1$,
hence $p^{*} \notin \mathcal{P}_3$ and $m \notin H_{\mathrm{even}}$:
\begin{center}
\begin{tabular}{rrll}
\toprule
$m$ & $p^{*}$ digits & Non-3-Higgs witness $q$ & Reason \\
\midrule
$2446$  & $368$  & $q = 4513 \mid p^{*}-1$ & $v_2(4512) = 5 > 3$ \\
$10294$ & $1549$ & $q = 2657 \mid p^{*}-1$ & $v_2(2656) = 5 > 3$ \\
$10958$ & $1649$ & $q = 593 \mid p^{*}-1$ & $v_2(592) = 4 > 3$ \\
$17398$ & $2612$ & $q = 139313 \mid p^{*}-1$ & $v_2(139312) = 4 > 3$ \\
$19066$ & $2870$ & $q = 343081 \mid p^{*}-1$ & contains $17$ via $953$ \\
$20282$ & $3053$ & $v_2(p^{*}-1) = 5071$ & direct $v_2$ overflow \\
\bottomrule
\end{tabular}
\end{center}
In addition, $m = 30882 = 6 \cdot 5147$ is closed by
Proposition~\ref{prop:Heven-structure}(3) from the closure of
$m = 10294 = 2 \cdot 5147$.
\end{lemma}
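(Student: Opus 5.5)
The lemma is a finite certificate, so the plan is to check each row of the table separately. Each check has the same three parts, followed by one structural step for $m=30882$.

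\emph{Step 1: divisibility.} I would confirm $p^{*}\mid 2^m+1$ by computing $2^m \bmod p^{*}$ with modular exponentiation and checking that it equals $p^{*}-1$. This takes a few thousand squarings modulo a number of at most $3053$ digits, which is routine.

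\emph{Step 2: primality of $p^{*}$.} This is where rigor is needed, because the argument uses the definition of $\mathcal{P}_3$, and that definition only makes sense for primes. For the six large cofactors I would cite the APR-CL runs recorded in \path{anc/aprcl_transcripts/}, produced with PARI/GP \texttt{isprime(\,,2)}. Pocklington or ECPP would be alternatives, but they would need the factorization of $p^{*}-1$, which is precisely what we lack. I expect this to be the main obstacle. Running APR-CL at $2600$--$3053$ digits is expensive, and the certificate is a transcript that can be rerun rather than a compact object. So the argument's correctness rests entirely on the reproducibility of these runs.

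\emph{Step 3: the non-3-Higgs witness.} For rows 1--4 I would show $q\mid p^{*}-1$ by computing $(p^{*}-1)\bmod q$. I would then check that $q$ is prime by trial division, which is trivial at these sizes, and compute $v_2(q-1)$ directly, for example $4512=2^5\cdot 141$. Since $v_2(q-1)>3$, $q\notin\mathcal{P}_3$. The recursive definition then gives $p^{*}\notin\mathcal{P}_3$, because $p^{*}-1$ has a prime factor that is not 3-Higgs.

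For $m=19066$ the chain is two levels deep. I would verify that $343081$ and $953$ are prime, that $953\mid 343080$, and that $17\mid 952$, which holds since $952=2^3\cdot7\cdot17$. Then $17\notin\mathcal{P}_3$ because $16=2^4$, so $953\notin\mathcal{P}_3$, so $343081\notin\mathcal{P}_3$, and finally $p^{*}\notin\mathcal{P}_3$. This is the same descent as in the $m=4022$ example.

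For $m=20282$ only the computation of $v_2(p^{*}-1)=5071>3$ is needed. That value is plausible, since $p^{*}\equiv1\pmod{4\cdot10141}$ is forced and the order of $2$ can contribute higher powers. In each case $p^{*}$ is a prime divisor of $2^m+1$ that is not 3-Higgs, so $m\notin H$ by definition.

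\emph{Step 4: $m=30882$.} I would apply Proposition~\ref{prop:Heven-structure}(3) in contrapositive form. Write $30882=2k$ with $k=3\cdot5147$, and take $d=5147$, an odd divisor of $k$. If $30882$ were in $H_{\mathrm{even}}$, then $2d=10294$ would be in $H_{\mathrm{even}}$, contradicting row 2. Concretely, $2^{10294}+1\mid 2^{30882}+1$ because the quotient of exponents is $3$, which is odd. So the same $p^{*}$ is a non-3-Higgs prime divisor of $2^{30882}+1$.
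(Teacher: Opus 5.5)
Your proposal is correct and follows the paper's proof essentially step for step. Both use modular reduction to confirm $p^{*}\mid 2^m+1$ and PARI/GP APR-CL runs for the primality of $p^{*}$. Both check $q\mid p^{*}-1$ by trial division and then close each row with either the $v_2$ overflow or the descent $343081\succ 953\succ 17$. Both handle $m=30882$ through Proposition~\ref{prop:Heven-structure}(3), using $2^{10294}+1\mid 2^{30882}+1$.

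Two of your side remarks are inaccurate, though neither affects the argument:
\begin{enumerate}
\item ECPP does not need any factorization of $p^{*}-1$.
\item In row 6 the valuation $v_2(p^{*}-1)=5071$ does not come from the order of $2$, which is exactly $4\cdot 10141$ and so contributes only $2^2$. It comes from $p^{*}$ being the entire Aurifeuillean half $2^{10141}+2^{5071}+1$. That same fact means a Proth test would certify this row's primality directly.
\end{enumerate}
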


\begin{proof}
For each row, the prime $p^{*}$ is recorded in the
supplementary factor cache and verified to divide $2^m + 1$ by direct
modular reduction. The primality of each $p^{*}$ is established by an
APR-CL primality verification carried out in PARI/GP 2.17.3
(\texttt{isprime(n, 2)}), and the subsequent exclusions use Pratt-tree
descents~\cite{FKL,WimmerNoschinski}. PARI's \texttt{isprime(n, 2)}
returns a Boolean rather than an independently checkable certificate
object;
the supplementary material therefore bundles the deterministic input
files (the $p^{*}$ values in decimal) and the runtime logs ($3.4$,
$79$, $88$, $615$, $679$, $958$ seconds wall-clock for rows
$1$--$6$ on $11$-thread parallel execution), so readers can re-run
\texttt{isprime(n, 2)} on the recorded version and reproduce the
result. Conditional on the correctness of PARI/GP's APR-CL
implementation, the primality of each $p^{*}$ is thereby established. For each row's witness $q$, trial division
verifies $q \mid p^{*} - 1$, and the non-3-Higgs status of $q$ follows
either from the explicit $v_2$ overflow (rows 1, 2, 3, 4, 6) or from a
Pratt-tree descent terminating at $17$ (row 5: $343081 \succ 953
\succ 17$, with $v_2(16) = 4 > 3$). Hence $p^{*}$ has a non-3-Higgs
prime in its Pratt tree and $p^{*} \notin \mathcal{P}_3$; since
$p^{*} \mid 2^m+1$, this gives $m \notin H_{\mathrm{even}}$. The
final case $m = 30882$ follows from
Proposition~\ref{prop:Heven-structure}(3): $10294 \mid 30882$, so
$2^{10294}+1 \mid 2^{30882}+1$; any non-3-Higgs prime of $2^{10294}+1$
(in particular the $1549$-digit $p^{*}$) is also a divisor of
$2^{30882}+1$.
\end{proof}

\subsection{Ford's theorem and thinness}

The structural lemma (Proposition~\ref{prop:Heven-structure}) confines
$H_{\mathrm{even}}$ to the doubled image of a much more restrictive
arithmetic object: the multiplicative semigroup
\[
\mathcal{S}_3^{(\le 3)}
\;:=\;
\left\{\, n \ge 1 \;:\;
\begin{array}{l}
\text{every prime factor of } n \text{ lies in } \mathcal{P}_3,\\
\text{and each such prime has exponent at most } 3
\end{array}
\right\}.
\]
where $\mathcal{P}_3$ denotes the 3-Higgs primes (OEIS A057447). The
defining recursive closure of $\mathcal{P}_3$,
\[
p \in \mathcal{P}_3 \iff q \mid (p-1) \Rightarrow q \in \mathcal{P}_3,
\]
makes $\mathcal{P}_3$ a downward-closed set of primes in the sense of
Ford~\cite{Ford2014}. Since the smallest omitted prime is $p_0 = 17$
(as $17 - 1 = 2^4$ violates the $v_2 \le 3$ exponent bound), Ford's
theorem applies unconditionally and yields
\begin{equation}\label{eq:ford-bound}
\Pi_3(X) := \#\{p \le X : p \in \mathcal{P}_3\}
\;\ll\; \frac{X}{(\log X)^{1 + 1/(p_0 - 1)}}
\;=\; \frac{X}{(\log X)^{17/16}},
\end{equation}
together with $\Pi_3(X) \ll X^{1-\delta}$ for some absolute
$\delta > 0$, and the absolute convergence
$\sum_{p \in \mathcal{P}_3} 1/p < \infty$.

An entirely analogous argument applies to $H_{\mathrm{odd}} := H
\setminus H_{\mathrm{even}}$ (the odd part of $H$). For $m$ odd in
$H$ with $m > 3$, the Zsigmondy theorem yields a primitive prime
divisor $r$ of $2^{2m}-1$ with $r \mid 2^m+1$; this $r$ has
$\operatorname{ord}_r(2) = 2m$ and so $2m \mid r-1$, giving the
same recursive constraint on the prime factors of $m$ themselves.
The exceptional small values $m = 1$ ($2^1+1 = 3 \in \mathcal{P}_3$,
$1$ is the empty product) and $m = 3$ ($2^3+1 = 9 = 3^2$, where the
Zsigmondy exception $2^6 - 1$ removes the primitive-divisor step but
$3 \in \mathcal{P}_3$ directly) are verified by hand. Thus
$m \in \mathcal{S}_3^{(\le 3)}$ as well.

\begin{theorem}\label{thm:H-thin}
There exists an absolute constant $\eta > 0$ such that
\[
\#\{m \le X : m \in H\} \;\ll\; X^{1-\eta}.
\]
Moreover, $\displaystyle\sum_{m \in H} \frac{1}{m} < \infty$.
The same bound holds for the subsets $H_{\mathrm{even}}$ and
$H_{\mathrm{odd}}$.
\end{theorem}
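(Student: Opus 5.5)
The plan is to reduce Theorem~\ref{thm:H-thin} to a counting statement for the semigroup $\mathcal{S}_3^{(\le 3)}$, and then derive that statement from Ford's prime-count bound~\eqref{eq:ford-bound} by Rankin's trick. First we embed $H$ into $\mathcal{S}_3^{(\le 3)}$ and its double. If $m\in H_{\mathrm{even}}$, then Proposition~\ref{prop:Heven-dyadic} gives $m=2k$ with $k$ odd, and Proposition~\ref{prop:Heven-structure}(1)--(2) gives $k\in\mathcal{S}_3^{(\le 3)}$; the case $k=1$ is the empty product. If $m\in H_{\mathrm{odd}}$, the Zsigmondy argument given just before the theorem shows $m\in\mathcal{S}_3^{(\le 3)}$, with $m=1,3$ checked directly. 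Writing $S(X):=\#\{n\le X: n\in\mathcal{S}_3^{(\le 3)}\}$, we therefore get
\[
\#\{m\le X: m\in H_{\mathrm{odd}}\}\le S(X),\qquad \#\{m\le X: m\in H_{\mathrm{even}}\}\le S(X/2),
\]
and it suffices to prove $S(X)\ll X^{1-\eta}$ and $\sum_{n\in\mathcal{S}_3^{(\le 3)}}1/n<\infty$.

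The second step is to convert Ford's bound into convergence of a shifted prime sum. From $\Pi_3(t)\ll t^{1-\delta}$, partial summation gives, for any fixed $\sigma>1-\delta$,
\[
\sum_{p\in\mathcal{P}_3}p^{-\sigma}=\int_{2^-}^{\infty}t^{-\sigma}\,d\Pi_3(t)=\sigma\int_2^\infty \Pi_3(t)\,t^{-\sigma-1}\,dt<\infty .
\]
Fix $\eta:=\delta/2$ and $\sigma:=1-\eta$.

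The third step is Rankin's trick. Every $n\le X$ satisfies $1\le (X/n)^{\sigma}$, so summing over $n\in\mathcal{S}_3^{(\le 3)}$ with $n\le X$ gives
\[
S(X)\le X^{\sigma}\sum_{n\in\mathcal{S}_3^{(\le 3)}}n^{-\sigma}
= X^{\sigma}\prod_{p\in\mathcal{P}_3}\bigl(1+p^{-\sigma}+p^{-2\sigma}+p^{-3\sigma}\bigr).
\]
The product is at most $\exp\bigl(C\sum_{p\in\mathcal{P}_3}p^{-\sigma}\bigr)<\infty$, where $C$ absorbs the geometric factor because $p^{-\sigma}\le 2^{-1/2}$. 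Hence $S(X)\ll X^{1-\eta}$. The same Euler-product argument with $\sigma=1$, using the convergence $\sum_{p\in\mathcal{P}_3}1/p<\infty$ stated after~\eqref{eq:ford-bound}, gives $\sum_{n\in\mathcal{S}_3^{(\le 3)}}1/n<\infty$. Then
\[
\sum_{m\in H}\frac1m\le\Bigl(1+\tfrac12\Bigr)\sum_{n\in\mathcal{S}_3^{(\le 3)}}\frac1n<\infty,
\]
which covers $H$, $H_{\mathrm{even}}$ and $H_{\mathrm{odd}}$ at once.

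The main obstacle is not the counting, which is routine once a power saving for $\Pi_3$ is available. It is making sure Ford's theorem genuinely applies to $\mathcal{P}_3$. Ford's hypothesis is downward closure: $p\in\mathcal{P}_3$ and $q\mid p-1$ imply $q\in\mathcal{P}_3$. The extra exponent caps in the definition only make $\mathcal{P}_3$ smaller than the pure downward closure, but they do not break the closure property, since every prime dividing $p-1$ for a 3-Higgs $p$ is itself required to be 3-Higgs. Together with the omitted prime $17$, this is what delivers the $X^{1-\delta}$ bound. I would state this check explicitly. I would also note that $\eta$ depends only on Ford's absolute $\delta$, so $\eta$ is absolute.
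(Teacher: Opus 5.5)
Your proposal is correct and follows the paper's proof essentially step for step. You embed $H$ into $\mathcal{S}_3^{(\le 3)}$ (or its double) via Proposition~\ref{prop:Heven-structure} and the odd-$m$ Zsigmondy analog, then apply Rankin's trick with the Euler product $\prod_{p\in\mathcal{P}_3}(1+p^{-s}+p^{-2s}+p^{-3s})$, whose convergence for $s>1-\delta$ comes from partial summation against Ford's bound~\eqref{eq:ford-bound}. Your explicit use of Proposition~\ref{prop:Heven-dyadic} to ensure $k$ is odd (the structural lemma assumes this, while the paper's proof relies on it only implicitly) and the slightly sharper $S(X/2)$ count are small tidyings, not a different route.
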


\begin{proof}
By the structural lemma (Proposition~\ref{prop:Heven-structure}) and
its analog above, every $m \in H$ satisfies $m \in \mathcal{S}_3^{(\le 3)}$
or $m/2 \in \mathcal{S}_3^{(\le 3)}$. So
$\#\{m \le X : m \in H\} \le 2 \cdot \#\{k \le X : k \in
\mathcal{S}_3^{(\le 3)}\}$.

For $s \in (1-\delta, 1)$,
\[
\#\{n \le X : n \in \mathcal{S}_3^{(\le 3)}\}
\;\le\; X^s \prod_{p \in \mathcal{P}_3} (1 + p^{-s} + p^{-2s} + p^{-3s})
\]
by Rankin's trick. The Euler product converges because $\sum_{p \in
\mathcal{P}_3} p^{-s}$ converges for any $s > 1-\delta$ (by partial
summation against (\ref{eq:ford-bound})). Hence
$\#\{n \le X : n \in \mathcal{S}_3^{(\le 3)}\} \ll X^s$, and choosing
$s = 1 - \eta$ for any $\eta < \delta$ gives the claim.

The convergence of $\sum 1/m$ follows from
$\sum_{n \in \mathcal{S}_3^{(\le 3)}} 1/n =
\prod_{p \in \mathcal{P}_3}(1 + p^{-1} + p^{-2} + p^{-3}) < \infty$.
\end{proof}

\begin{corollary}[Power-saving counting and reciprocal mass for the exponent-capped family]\label{cor:A-bound}
With $A(x) := \#(\mathcal{S}_3^{(\le 3)} \cap [1, x])$, there is an absolute
constant $\eta > 0$ (the same $\eta$ as in
Theorem~\ref{thm:H-thin}) such that
\[
A(x) \;\ll\; x^{1-\eta}
\qquad \text{and} \qquad
\sum_{n \in \mathcal{S}_3^{(\le 3)}} \frac{1}{n} < \infty.
\]
\end{corollary}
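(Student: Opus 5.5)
The plan is to run the Rankin-type argument from the proof of Theorem~\ref{thm:H-thin} again, but with the set $H$ deleted: all that argument needs is Ford's bound~\eqref{eq:ford-bound} for $\mathcal{P}_3$ and the exponent cap $3$. Ford's theorem gives an absolute $\delta>0$ with $\Pi_3(X)\ll X^{1-\delta}$, unconditionally, since $\mathcal{P}_3$ is downward-closed and omits $17$. I fix any $s\in(1-\delta,1)$. I first check by partial summation that $\sum_{p\in\mathcal{P}_3}p^{-s}$ converges. Indeed,
\[
\sum_{p\in\mathcal{P}_3,\,p\le Y}p^{-s}=\Pi_3(Y)Y^{-s}+s\int_2^Y \Pi_3(t)t^{-s-1}\,dt,
\]
and the integrand is $\ll t^{-\delta-s}$, whose exponent is $<-1$.

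Next comes the counting bound. Every $n\in\mathcal{S}_3^{(\le3)}$ with $n\le x$ satisfies $(x/n)^s\ge1$. Summing over all such $n$ gives
\[
A(x)\le x^s\sum_{n\in\mathcal{S}_3^{(\le 3)}}n^{-s}=x^s\prod_{p\in\mathcal{P}_3}\bigl(1+p^{-s}+p^{-2s}+p^{-3s}\bigr).
\]
The Euler factorization is valid because membership in $\mathcal{S}_3^{(\le3)}$ is defined prime by prime with exponents in $\{0,1,2,3\}$. The product converges by the previous step, using $\log(1+u)\le u$ and $p^{-2s}+p^{-3s}\le 2p^{-s}$, which bounds its logarithm by $3\sum p^{-s}$. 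Choosing $s=1-\eta$ with $\eta<\delta$ then yields $A(x)\ll x^{1-\eta}$, with the same $\eta$ as in Theorem~\ref{thm:H-thin}.

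For the reciprocal mass I apply the same factorization at $s=1$:
\[
\sum_{n\in\mathcal{S}_3^{(\le3)}} \frac1n=\prod_{p\in\mathcal{P}_3}\bigl(1+p^{-1}+p^{-2}+p^{-3}\bigr),
\]
which converges because $\sum_{p\in\mathcal{P}_3}1/p<\infty$. This is either quoted from Ford, or follows from $p^{-1}\le p^{-s}$ and the convergence already shown. Alternatively, it follows from $A(x)\ll x^{1-\eta}$ by partial summation.

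There is no real obstacle here. The only points needing care are that the power saving $\Pi_3(X)\ll X^{1-\delta}$, not merely the $(\log X)^{17/16}$ saving, is what Ford supplies for downward-closed sets with an omitted prime, and that the exponent caps $v_q\le3$ (including $v_2\le3$) in the definition of $\mathcal{P}_3$ do not break downward-closedness in Ford's sense. A Higgs prime requires all prime factors of $p-1$ to be Higgs, so $\mathcal{P}_3$ is contained in the downward-closed set of primes all of whose prime-factor chains avoid $17$. Ford's bound for that larger set transfers to $\mathcal{P}_3$ by inclusion, so monotonicity settles this point.
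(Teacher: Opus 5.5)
Your proposal is correct and follows the paper's own argument. The paper proves the corollary by pointing to the intermediate steps of Theorem~\ref{thm:H-thin}: Rankin's trick with $s=1-\eta$, convergence of $\sum_{p\in\mathcal{P}_3}p^{-s}$ by partial summation against Ford's bound~\eqref{eq:ford-bound}, and the Euler product $\prod_{p\in\mathcal{P}_3}(1+p^{-1}+p^{-2}+p^{-3})$ for the reciprocal mass. Your closing detour through the larger set of primes whose Pratt trees avoid $17$ is harmless but unnecessary, since the recursive definition already makes $\mathcal{P}_3$ closed under passing to prime factors of $p-1$.
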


\begin{proof}
Both statements are the intermediate steps in the proof of
Theorem~\ref{thm:H-thin}: the Rankin-trick application together with
Ford's bound (\ref{eq:ford-bound}) and the Euler-product convergence.
\end{proof}

The corollary is qualitatively stronger than density zero: the set
$H_{\mathrm{even}}$ has a power-saving thin support, and its
reciprocals sum to a finite constant. The remaining gap between this
unconditional result and Conjecture~\ref{conj:Heven-finite} is the
last analytic step: showing that for sufficiently large
$k \in \mathcal{S}_3^{(\le 3)}$, no prime $r \mid 2^{2k}+1$ with
$\operatorname{ord}_r(2) = 4k$ has $(r-1)/(4k) \in \mathcal{S}_3^{(\le 3)}$.

\subsection{Aurifeuillean structure of the open candidates}\label{sec:Aurifeuillean}

Every primitive undecided candidate in Theorems~\ref{thm:Heven-3000},
\ref{thm:Heven-5000}, \ref{thm:Heven-10000}, \ref{thm:Heven-15000},
\ref{thm:Heven-20000}, \ref{thm:Heven-25000}, \ref{thm:Heven-30000},
\ref{thm:Heven-35000}, \ref{thm:Heven-40000}
has the form $m = 2p$ with $p$ an odd prime; composite undecided
candidates are inherited from unresolved prime divisors. For $m=2p$,
the classical Aurifeuillean
identity~\cite{Wagstaff2002} gives an algebraic factorization
\begin{equation}\label{eq:Aurifeuillean}
2^{2p} + 1
\;=\;
\bigl(2^p - 2^{(p+1)/2} + 1\bigr)
\bigl(2^p + 2^{(p+1)/2} + 1\bigr)
\;=:\; L_p \cdot M_p.
\end{equation}
Each $L_p$ and $M_p$ has approximately half the bit-length of
$2^{2p}+1$. Moreover, both sides are sparse polynomials of degree
four in a power of $2$:
\begin{itemize}
\item For $p = 4u+1$, set $X = 2^u$, so $2^p = 2X^4$ and
$2^{(p+1)/2} = 2X^2$. Then
$L_p = 2X^4 - 2X^2 + 1, \quad M_p = 2X^4 + 2X^2 + 1$.
\item For $p = 4u+3$, set $X = 2^u$, so $2^p = 8X^4$ and
$2^{(p+1)/2} = 4X^2$. Then
$L_p = 8X^4 - 4X^2 + 1, \quad M_p = 8X^4 + 4X^2 + 1$.
\end{itemize}
In both cases each half is an integer-coefficient quartic in $X$.
This is the natural starting point for SNFS polynomial selection;
In special-number field-sieve practice, polynomials of
degree $5$ or $6$ (obtained by re-parameterization) may sieve
more efficiently than the natural quartic for numbers in the
$300$--$3000$-digit range relevant here. The Aurifeuillean halves are special-form targets rather than
generic hard composites: each remaining prime-branch candidate gives
an SNFS target with explicit small-degree algebraic structure.

\paragraph{Worked example: $m = 2426$, $p = 1213$.}
For $p = 1213 \equiv 1 \pmod 4$, write $1213 = 4 \cdot 303 + 1$ and
set $x = 2^{303}$. Then
\[
2^{2426}+1 = (2x^4 - 2x^2 + 1)(2x^4 + 2x^2 + 1).
\]
Numerically, $L_{1213}$ and $M_{1213}$ are each $\approx 366$ decimal
digits. The known prime factor $P = 25893760589$ divides $L_{1213}$
exactly (not $M_{1213}$). A complete Pratt-tree descent~\cite{Ford2014}
on $P$, namely
\begin{align*}
P-1 &= 2^2 \cdot 1213 \cdot 5336719 \\
1213-1 &= 2^2 \cdot 3 \cdot 101, \qquad 101-1 = 2^2 \cdot 5^2 \\
5336719-1 &= 2 \cdot 3 \cdot 889453, \qquad 889453-1 = 2^2 \cdot 3^2 \cdot 31 \cdot 797 \\
797-1 &= 2^2 \cdot 199, \qquad 199-1 = 2 \cdot 3^2 \cdot 11, \qquad 11-1 = 2 \cdot 5,
\end{align*}
visits the distinct primes
$\{2,3,5,11,31,101,199,797,1213,889453,5336719,P\}$ with maximal
tree height $8$, and every exponent in every intermediate $q-1$ is at
most $3$. So $P$ is \emph{fully} 3-Higgs-compatible; no shallow
recursive obstruction exists. Closing $m = 2426$ therefore requires
finding a non-3-Higgs prime in the residual composite
$L_{1213} / (5 \cdot P)$ ($355$ digits) or in $M_{1213}$ ($366$
digits) --- both special-form SNFS targets after the split.

This example is independently confirmed by the official Cunningham
Project tables~\cite{Wagstaff2002}: the January 2026 base-$2$ table
lists
\[
2,\,2426\mathrm{L} \;=\; 2 \cdot 25893760589 \cdot \mathrm{C}355,
\qquad
2,\,2426\mathrm{M} \;=\; \mathrm{C}366,
\]
matching our decomposition exactly. The prime $25893760589$ is
therefore a tabulated Cunningham factor of an as-yet-unfinished
Aurifeuillean half, not an ad-hoc byproduct of our scripts.

\subsection{The remaining analytic obstacle}

\paragraph{Scale obstruction: thinness alone is insufficient.}
For each $m = 2k \in H_{\mathrm{even}}$, a primitive prime divisor
$r$ of $2^{2k}+1$ must satisfy
\[
r \equiv 1 \pmod{4k}, \qquad \operatorname{ord}_r(2) = 4k,
\qquad (r-1)/(4k) \in \mathcal{S}_3^{(\le 3)}.
\]
A naive counting argument from Theorem~\ref{thm:H-thin} fails badly
at the relevant scale. Even with the unconditional bound
$\Pi_3(x) \ll x^{1-\delta}$, evaluating at the primitive-divisor
height $x = 2^{2k}$ gives
\[
\Pi_3\bigl(2^{2k}\bigr) \;\ll\; 2^{2(1-\delta)k},
\]
which is still \emph{exponential} in $k$. Even after restricting to
the residue class $r \equiv 1 \pmod{4k}$ (heuristic factor
$1/\varphi(4k)$), the envelope remains exponential. So Ford's
power-saving thinness, while genuinely strong as a global statement,
is on the wrong scale to force the candidate set of 3-Higgs primitive
divisors empty for large $k$. Likewise, a hypothetical bound such
as $\Pi_3(x) \ll x/(\log x)^{1+\varepsilon}$ would only give
$\Pi_3(2^{2k}) \ll 2^{2k}/k^{1+\varepsilon}$, still exponential.
Density arguments by themselves cannot close
Conjecture~\ref{conj:Heven-finite}.

\paragraph{The precise bottleneck: log-mass, not reciprocal-mass.}
The mismatch above can be made fully quantitative, and identifies
exactly the inequality that closes the conjecture. By the cyclotomic
mass identity
\[
\log\bigl(2^{2p}+1\bigr) \;=\; \log 5 + \log \Phi_{4p}(2)
\;\sim\; 2p \log 2,
\]
the prime divisors $r \mid 2^{2p}+1$ must collectively carry total
logarithmic mass $\sum \nu_r(2^{2p}+1) \log r \sim 2p \log 2$. By
Hong's valuation bound~\cite{Hong}, the non-primitive part contributes
at most $O(\log(4p))$ to this mass; consequently, in any scenario
$2p \in H_{\mathrm{even}}$, the \emph{primitive} divisors of
$\Phi_{4p}(2)$ must supply primitive log-mass $\gg p$. On the other
hand, Ford's thinness combined with the residue-class restriction
gives a \emph{reciprocal-mass} bound only:
\[
\sum_{\substack{r \text{ prime, } r \equiv 1 \pmod{4p}\\ (r-1)/(4p) \,\in\, \mathcal{S}_3^{(\le 3)}}}
\frac{1}{r}
\;\ll\;
\frac{1}{p}.
\]
A set of admissible primes can have reciprocal-mass $O(1/p)$ and yet
total logarithmic mass enormously larger than $p$ (for example, a
single admissible prime near $2^{2p}+1$ has reciprocal mass
$\sim 2^{-2p}$ but log-mass $\sim 2p \log 2$). So the gap between what
Ford-type thinness controls and what would yield finiteness is
exponential, of size $2^{2p}/p$. This is the exact reason the
unconditional thinness theorem (Theorem~\ref{thm:H-thin}), while
sharp on its own terms, falls short of finiteness.

\paragraph{The precise missing theorem.}
The genuine analytic target is a sharper statement combining
recursive semigroup friability with an exact multiplicative-order
condition:

\begin{conjecture}[Hybrid semigroup-friable shifted-prime conjecture]
\label{conj:hybrid}
For all sufficiently large $k \in \mathcal{S}_3^{(\le 3)}$, no prime
$r$ satisfies both
\[
\operatorname{ord}_r(2) = 4k
\qquad\text{and}\qquad
\frac{r-1}{4k} \in \mathcal{S}_3^{(\le 3)}.
\]
\end{conjecture}

\noindent Conjecture~\ref{conj:hybrid} would close
Conjecture~\ref{conj:Heven-finite} immediately, since the primitive
divisor produced by Bilu--Hanrot--Voutier~\cite{BHV2001} for $k \ge 4$
would have to violate at least one of the two clauses, hence fail
the 3-Higgs test.

A strictly weaker conjecture, sharper in form because it directly
addresses the log-mass gap identified above, is the following:

\begin{conjecture}[Semigroup log-mass conjecture]\label{conj:log-mass}
There exists an absolute $\delta > 0$ such that for all sufficiently
large odd primes $p \in \mathcal{P}_3$,
\[
\sum_{\substack{r \text{ prime},\, r \le 2^{2p}+1\\
r \equiv 1\pmod{4p}\\ (r-1)/(4p) \,\in\, \mathcal{S}_3^{(\le 3)}}}
\log r \;\le\; (2 \log 2 - \delta)\, p.
\]
\end{conjecture}

\noindent Conjecture~\ref{conj:log-mass} would close
Conjecture~\ref{conj:Heven-finite}: in any scenario
$2p \in H_{\mathrm{even}}$, every primitive divisor of $\Phi_{4p}(2)$
contributes to the admissible-prime sum, so the LHS must be at least
the primitive log-mass $2p\log 2 - O(\log p)$, contradicting the
upper bound for $p$ large.

\paragraph{Caution: GRH and Artin/Hooley density are not the right
target.} It is tempting to expect that effective Chebotarev or
GRH-conditional primitive-root density theorems---e.g., Hooley's
conditional Artin estimate or its Kummer-extension refinements---will
supply the needed bound. They will not, for a structural reason worth
making explicit. Artin/Hooley-style theorems count varying primes $q$
in which a fixed integer $a$ has order $q-1$; for fixed $p$, however,
the set $\{r : \operatorname{ord}_r(2) = 4p\}$ is finite---it is
exactly the prime support of the single integer $\Phi_{4p}(2)$. Thus
the relevant object is not a density of primes in a progression at
all; it is the set of prime divisors of a fixed cyclotomic value.
GRH/Artin density is the wrong scale of object. A finiteness proof
must control the divisors of $\Phi_{4p}(2)$ individually (a
divisor-transference statement), not via a density estimate on
${1\bmod 4p}$.

\paragraph{The sharper conditional reduction.}
The actually decisive conditional statement, identified by Hong's
exact valuation framework~\cite{Hong}, is the following pair. Let
\[
A(x) := \#\bigl(\mathcal{S}_3^{(\le3)} \cap [1, x]\bigr),
\qquad
Z(p) := \#\Bigl\{ r \mid \Phi_{4p}(2) : (r-1)/(4p) \in \mathcal{S}_3^{(\le3)} \Bigr\}.
\]

\begin{conjecture}[Divisor-transference for $\Phi_{4p}(2)$]\label{conj:divisor-transference}
There exists $C > 0$ such that for every sufficiently large
$p \in \mathcal{P}_3$,
\[
Z(p) \;\le\; C \cdot \frac{A(2^{2p}/(4p))}{p}.
\]
\end{conjecture}

\begin{conjecture}[Sublogarithmic semigroup growth]\label{conj:sublog-growth}
$A(x) = o(\log x)$ as $x \to \infty$.
\end{conjecture}

\begin{theorem}[A strong sufficient criterion]\label{thm:conditional-sharpened}
Conjectures~\ref{conj:divisor-transference} and \ref{conj:sublog-growth}
together imply $|H_{\mathrm{even}}| < \infty$.
\end{theorem}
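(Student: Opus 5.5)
The plan is to reduce to the prime branch with Theorem~\ref{thm:prime-reduction} and then play the two conjectures off against each other: the first gives an upper bound on $Z(p)$, and the existence of a primitive divisor gives the lower bound $Z(p)\ge 1$. By Theorem~\ref{thm:prime-reduction} it suffices to show that $H_{\mathrm{even}}^{\mathrm{prime}}$ is finite. So suppose $m=2p\in H_{\mathrm{even}}$ with $p$ an odd prime. By Proposition~\ref{prop:Heven-structure}(1), $p\in\mathcal{P}_3$, which makes Conjecture~\ref{conj:divisor-transference} applicable to $p$ once $p$ is large enough.

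First I would establish the lower bound $Z(p)\ge 1$ for every such $p$. Zsigmondy's theorem, exactly as in the proof of Proposition~\ref{prop:Heven-structure}, gives a primitive prime divisor $r$ of $2^{2p}+1$. It has $\operatorname{ord}_r(2)=4p$, so $r\mid\Phi_{4p}(2)$ and $4p\mid r-1$. Since $2p\in H_{\mathrm{even}}$, $r$ is 3-Higgs, so $r-1\in\mathcal{S}_3^{(\le 3)}$: every prime factor of $r-1$ is 3-Higgs with exponent at most $3$, including the cap $v_2(r-1)\le 3$. Dividing by $4p$ only lowers exponents, so $(r-1)/(4p)$ is again a product of 3-Higgs primes with exponents at most $3$, i.e.\ it lies in $\mathcal{S}_3^{(\le 3)}$. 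Hence $r$ is counted by $Z(p)$, and $Z(p)\ge 1$.

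Next I would bound $Z(p)$ from above. Put $x_p:=2^{2p}/(4p)$, so that $\log x_p = 2p\log 2-\log(4p)\le 2p\log 2$. Conjecture~\ref{conj:sublog-growth} gives $A(x_p)=o(\log x_p)=o(p)$ as $p\to\infty$. Feeding this into Conjecture~\ref{conj:divisor-transference} gives
\[
Z(p)\;\le\; C\,\frac{A(x_p)}{p}\;=\;o(1).
\]
There is therefore a $p_1$ such that $Z(p)<1$, hence $Z(p)=0$ (being an integer), for all primes $p\ge p_1$ in $\mathcal{P}_3$.

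Combining the two steps, no prime $p\ge p_1$ can have $2p\in H_{\mathrm{even}}$. So $H_{\mathrm{even}}^{\mathrm{prime}}\subseteq\{2p: p<p_1\}$ is finite, and Theorem~\ref{thm:prime-reduction} gives $|H_{\mathrm{even}}|\le 4^{N}<\infty$. I do not expect a real obstacle, because all the analytic content sits in the two hypotheses. The only step needing care is the lower bound $Z(p)\ge1$, specifically checking that $(r-1)/(4p)$ stays in $\mathcal{S}_3^{(\le3)}$. This needs the exponent caps to be upper bounds, which are inherited by divisors, and it needs Zsigmondy to apply; it does, since the exceptional index $n=3$ never occurs for $2p$ with $p$ odd.
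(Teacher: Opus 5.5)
Your proposal is correct and follows the paper's route: Conjecture~\ref{conj:divisor-transference} combined with $A(x)=o(\log x)$ at $x=2^{2p}/(4p)$ forces the integer $Z(p)$ to vanish for large $p$, and Theorem~\ref{thm:prime-reduction} then gives finiteness. You also check explicitly that $p\in\mathcal{P}_3$, so the transference conjecture applies, and that a Zsigmondy primitive divisor $r$ of $2^{2p}+1$ is counted by $Z(p)$ whenever $2p\in H_{\mathrm{even}}$; these two links are left implicit in the paper's proof.
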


\begin{proof}
Combining the two hypotheses: $Z(p) \le C \cdot A(2^{2p}/(4p))/p$;
with $x = 2^{2p}/(4p)$ one has $\log x \asymp p$, so by
Conjecture~\ref{conj:sublog-growth} $A(x) = o(\log x) = o(p)$, whence
$Z(p) \to 0$. Since $Z(p)$ is a non-negative integer, $Z(p) = 0$ for
all $p$ sufficiently large. Combined with the prime-case reduction
(Theorem~\ref{thm:prime-reduction}), this gives finiteness.
\end{proof}

\paragraph{Caveat: the sublogarithmic growth hypothesis is very strong.}
Theorem~\ref{thm:conditional-sharpened} is logically valid as a sufficient
condition, but Conjecture~\ref{conj:sublog-growth} is dramatically
stronger than current evidence suggests for $\mathcal{P}_3$. The
following elementary proposition makes this precise.

\begin{proposition}\label{prop:loglog-obstruction}
Let $P = \{p_1 < p_2 < \cdots\}$ be an infinite set of primes, and let
\[
\mathcal{S}(P) := \Bigl\{\, \prod_j p_j^{e_j} :\; 0 \le e_j \le 3\,\Bigr\},
\qquad
A_P(x) := \#\bigl(\mathcal{S}(P) \cap [1, x]\bigr).
\]
If $A_P(x) = o(\log x)$, then $\#\{p \in P : p \le x\} = O(\log \log x)$.
\end{proposition}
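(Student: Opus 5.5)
The natural route is to bound the counting function of $P$ from above by an explicit lower bound for $A_P$ coming from squarefree products. Write $\pi_P(y) := \#\{p \in P : p \le y\}$. The set $\mathcal{S}(P)$ contains every squarefree product of elements of $P$, since exponents $0$ or $1$ are allowed. So if $\pi_P(y) = t$, then the $2^t$ products $\prod_{p \in T} p$ with $T \subseteq P \cap [1,y]$ are distinct elements of $\mathcal{S}(P)$, and each is at most $y^t$. This gives the basic inequality
\[
A_P\bigl(y^{\pi_P(y)}\bigr) \;\ge\; 2^{\pi_P(y)}.
\]

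Next we feed in the hypothesis. Since $A_P(x) = o(\log x)$, there is $x_0$ with $A_P(x) \le \log x$ for all $x \ge x_0$. Apply this with $x = y^{t}$, where $t = \pi_P(y)$. For $y$ large, $x \ge x_0$ holds automatically because $P$ is infinite and so $t \to \infty$. We get $2^t \le t \log y$, hence $t \log 2 - \log t \le \log\log y$, and therefore $t \le (1+o(1))\log\log y/\log 2$. This is exactly $\pi_P(y) = O(\log\log y)$.

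A cruder variant avoids the dependence of $x$ on $t$: use only the subsets of the first $t$ primes of $P$. Either way the argument is elementary, and the only real care point is the uniformity bookkeeping. We must make sure that replacing $o(\log x)$ by $\le \log x$ at the point $x = y^{\pi_P(y)}$ is legitimate, which holds once that point exceeds $x_0$. We should also note that $y^t$ may be much larger than $y$, which is harmless because the hypothesis holds for all large $x$. No earlier result of the paper is needed; in fact the hypothesis could be weakened to $A_P(x) \ll \log x$, or even $(\log x)^{O(1)}$, with the same $O(\log\log x)$ conclusion up to constants.
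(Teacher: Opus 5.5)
Your proof is correct and is essentially the paper's argument. The paper also counts the $2^t$ squarefree subproducts of $p_1,\dots,p_t$, bounds them by $x_t=\prod_{j\le t}p_j\le p_t^{\,t}$, and derives $\log p_t\ge 2^t/t$ before inverting; you instead parametrize directly by $y$ with $t=\pi_P(y)$, which skips the inversion step, and your remark that $A_P(x)=O(\log x)$, or even $(\log x)^{O(1)}$, already suffices is also correct.
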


\begin{proof}
Let $x_t := \prod_{j \le t} p_j$. Every squarefree subproduct of
$p_1, \dots, p_t$ lies in $\mathcal{S}(P)$ and is at most $x_t$, so
$A_P(x_t) \ge 2^t$. The hypothesis $A_P(x) = o(\log x)$ then gives
$2^t \le A_P(x_t) = o(\log x_t) = o\!\bigl(\sum_{j \le t} \log p_j\bigr)$.
Since $\sum_{j \le t} \log p_j \le t \log p_t$, this forces
$\log p_t \ge 2^t / t$ for all sufficiently large $t$, i.e., $p_t$
grows at least roughly doubly exponentially in $t$. Inverting,
$\#\{p \in P : p \le x\} = O(\log \log x)$.
\end{proof}

\noindent
Applying Proposition~\ref{prop:loglog-obstruction} to $P = \mathcal{P}_3$:
if $\mathcal{P}_3$ is infinite (as Ford~\cite{Ford2014} conjectures for
recursively closed prime sets generally, and as our own enumeration
of $\mathcal{P}_3$ through $x = 2^{44}$ empirically supports, fitting
$\Pi_3(x) \approx x^{0.62}$), then $A(x) = o(\log x)$ would force
$\Pi_3(x) = O(\log \log x)$, contradicting the empirical growth.
Hence Conjecture~\ref{conj:sublog-growth} implicitly requires either
that $\mathcal{P}_3$ is itself \emph{finite} (an even bolder
conjecture than $H_{\mathrm{even}}$ finite) or some combinatorial
pathology that makes the bounded-exponent semigroup much sparser
than the underlying prime set. Neither is plausible.
Therefore Theorem~\ref{thm:conditional-sharpened} should be read as a
\emph{very strong sufficient criterion}, not the most plausible route
to an unconditional proof.

\paragraph{Better targets: divisor-level conjectures.}
The mathematically more credible analytic targets are divisor-level
statements about $\Phi_{4p}(2)$ itself, not density-level statements
about $\mathcal{P}_3$.

\begin{conjecture}[Divisor mod-16 equidistribution]\label{conj:divisor-mod16}
There exists $c > 0$ such that for all sufficiently large odd primes
$p \in \mathcal{P}_3$,
\[
\#\bigl\{ r \mid \Phi_{4p}(2) :\; r \equiv 1 \pmod{16} \bigr\}
\;\ge\; c \cdot \omega(\Phi_{4p}(2)),
\]
where $\omega(\Phi_{4p}(2)) \to \infty$ as $p \to \infty$.
\end{conjecture}

\noindent
Conjecture~\ref{conj:divisor-mod16} would close
Conjecture~\ref{conj:Heven-finite}: any prime $r \equiv 1 \pmod{16}$
has $v_2(r-1) \ge 4 > 3$, hence $r \notin \mathcal{P}_3$. The
hypothesis is heuristically supported by our empirical data on the
$v_2$ distribution of \emph{known} prime factors of open candidates
($53$ at $v_2 = 2$, $29$ at $v_2 = 3$, $0$ at $v_2 \ge 4$ across $82$
verified primes), which is itself a structural artifact: any
open candidate harboring a $v_2 \ge 4$ prime would have been closed.

\paragraph{The actual missing theorem.}
We emphasize that Conjecture~\ref{conj:divisor-mod16} is a
\emph{divisor-level equidistribution} statement about the prime
divisors of a single fixed integer $\Phi_{4p}(2)$. It is not a
consequence of any standard Chebotarev theorem, including effective
Chebotarev under GRH (Lagarias--Odlyzko~\cite{LO}). The reason is
structural: Chebotarev theorems control varying primes across a
range subject to a Frobenius condition; here the prime support of
$\Phi_{4p}(2)$ is a finite set, not a range. The needed object is a
\emph{divisor-randomness} or transference-type theorem inspired by
recent work of Ford on the Poisson approximation of prime divisors
of shifted primes, transplanted from random shifted primes to prime
divisors of fixed cyclotomic values. Such a transplant does not
currently exist in the literature; identifying it as the missing
input is, in our view, the precise analytic content of the conjecture.

The summary is:
\begin{itemize}
\item Theorem~\ref{thm:conditional-sharpened} (semigroup-growth route)
is logically valid but its hypothesis is too strong to be the
realistic endpoint.
\item Conjecture~\ref{conj:divisor-mod16} (divisor mod-$16$
equidistribution) is much closer to the real obstruction and
matches the empirical evidence directly.
\item Conjecture~\ref{conj:log-mass} (divisor log-mass bound) sits
between the two and is the cleanest formulation tied to the
cyclotomic identity.
\end{itemize}

\paragraph{Why the existing literature does not yet apply.}
The shifted-prime smoothness literature (Baker--Harman, Banks et al.,
Liu--Wu--Xi, Lamzouri, Banks--Friedlander--Pomerance--Shparlinski,
\textit{cf.}~\cite{LWX}) controls primes $p$ with $p-a$ (or
$(p-a)/q$) friable over an \emph{initial segment} of small primes,
typically with Dickman-type asymptotics. The semigroup
$\mathcal{S}_3^{(\le 3)}$ is qualitatively different: it is defined
recursively through prime chains rather than by a size cutoff, and
carries exponent caps. On top of that, the primitive-divisor
requirement gives a Chebotarev-type \emph{exact-order} constraint
$\operatorname{ord}_r(2) = 4k$, not merely the congruence
$r \equiv 1 \pmod{4k}$. No combination of these three ingredients
(recursive semigroup, exponent cap, exact order) appears together in
the published literature.

\paragraph{Why the 2-adic line is exhausted.}
For $m = 2k \in H_{\mathrm{even}}$, $v_2(r-1) \le 3$ combined with
$4k \mid r-1$ yields only $v_2(k) \le 1$. This is decisive on the
$m \equiv 0 \pmod 4$ branch and drives our cascade overshoot filter,
but once the surviving branch is reduced to $m = 2p$ with $p$ odd
prime (the form of all $53$ remaining candidates in
Theorem~\ref{thm:Heven-20000}), the 2-adic information has no
further force. Any final closure of those candidates would need new
structure beyond the basic order divisibility.

\paragraph{Empirical $\Pi_3$ behavior.}
Our own enumeration of the 3-Higgs counting function up to
$x = 2^{44}$ fits $\Pi_3(x) \approx x^{0.62}$ -- materially below the
universal trivial upper bound and well below $\pi(x)$, but still
consistent with $\mathcal{P}_3$ being infinite. (This is a
computational observation, not a theorem of Ford's; Ford's
\cite{Ford2014} contribution is the power-saving \emph{upper} bound
$\Pi_3(x) \ll x^{1-\delta}$, not a numerical-growth lower bound.) The
empirical exponent only sharpens the message of the scale obstruction
above: even at $\Pi_3(x) \asymp x^{0.62}$, $\Pi_3(2^{2k}) \asymp
2^{1.24 k}$, exponentially many in $k$.

\paragraph{A finiteness heuristic using the product structure.}
Despite the scale obstruction, a refined heuristic using the
multiplicative product structure of $2^{2p}+1$ does predict
finiteness. Suppose $m = 2p$ with $p$ an odd Higgs prime and
$\omega := \omega(\Phi_{4p}(2))$. Write $2^{2p}+1 = 5 \cdot
\prod_{i=1}^{\omega} r_i^{e_i}$ with primitive divisors $r_i \equiv
1 \pmod{4p}$. Taking logarithms,
\[
2p \log 2 \;=\; \log 5 + \sum_i e_i \log r_i.
\]
Modelling each $r_i$ as a ``random'' prime in the residue class
$1 \pmod{4p}$ at scale $r_i$, the probability of being 3-Higgs is
heuristically
\[
\Pr[r_i \in \mathcal{P}_3] \;\sim\; \frac{\Pi_3(r_i)}{\pi(r_i; 4p, 1)}
\;\sim\; \varphi(4p) \log(r_i) \cdot r_i^{-\eta},
\]
with $\eta$ the empirical exponent ($1 - 0.62 \approx 0.38$) observed in our own enumeration of $\mathcal{P}_3$ through $2^{44}$; this is computational evidence, not derived from Ford's upper-bound theorem. Assuming
multiplicative independence across the $\omega$ primitive divisors,
the joint probability that \emph{all} $r_i$ are 3-Higgs is
\[
\Pr[\text{all } r_i \in \mathcal{P}_3]
\;\sim\;
(\varphi(4p))^{\omega} \prod_i (\log r_i) \cdot \biggl( \prod_i r_i \biggr)^{-\eta}
\;\le\;
(2p)^{\omega + \omega \log\log(2^{2p})} \cdot 2^{-2p \eta}.
\]
For $\omega \ll p / \log p$ (a generic upper bound) and large $p$,
the exponential factor $2^{-2p\eta}$ dominates. Summing over primes
$p$ gives a heuristically finite total expected count of $m = 2p \in
H_{\mathrm{even}}$. Crucially this is on the \emph{right} scale (it
predicts finiteness, not just thinness), but two ingredients prevent
turning it into a proof:
\begin{itemize}
\item[(i)] the ``random'' model for primitive cyclotomic primes is
not validated — primes dividing $\Phi_{4p}(2)$ may be subject to
correlations beyond the residue-class condition;
\item[(ii)] joint independence of the 3-Higgs property across the
$\omega$ primes is unproven.
\end{itemize}
Both ingredients are special cases of Conjecture~\ref{conj:hybrid},
which would supply the missing equidistribution input.

\paragraph{Computational evidence of structural depth.}
Pollard $p-1$ at $B_1 = 10^6$ applied to the residual cofactor of
$2^m + 1$ for each of the $53$ open candidates finds \emph{no factor
in any of them}. A higher-budget run at $B_1 = 10^7$ on the first
$13$ open candidates (covering all of
Theorems~\ref{thm:Heven-3000}--\ref{thm:Heven-10000} plus the start
of Theorem~\ref{thm:Heven-15000}) likewise finds no factor; and a
single targeted $B_1 = 10^8$ run on $L_{1213}/(5P)$ ($355$ digits)
of $m = 2426$, completing in $397$ seconds, also finds no factor.
These results certify that for every open candidate $m = 2p$, every
prime $r$ dividing the residual cofactor of $2^m + 1$ satisfies
$(r-1)/(4p)$ contains a prime $> 10^6$ (resp.~$> 10^7$ for the first
$13$, resp.~$> 10^8$ for $L_{1213}/(5P)$). So even within a single
open candidate, the recursive 3-Higgs verification has depth $\ge 2$:
any future $r$ uncovered by NFS would, in turn, require a Pratt-tree
descent on $r-1$ to identify a non-3-Higgs witness or certify
$r \in \mathcal{P}_3$ recursively. The smoothness-resistance is
consistent with the heuristic that primes $r \mid \Phi_{4p}(2)$ are
``generic'' enough to require NFS-scale factoring, not $p-1$ or ECM
shortcuts.

\paragraph{No small $v_2 \ge 4$ witness across the candidate set.}
The following sweep was performed on the original $162$
partial-cofactor candidates prior to the six closures of
Lemma~\ref{lem:closures-deep}; the result is structurally informative
for the surviving $156$ as well, since they are a subset.
For each of the $162$ original open candidates $m = 2p$ with
$p \in [1213, 17467]$, we enumerated every prime $r \equiv 1
\pmod{16p}$ with $r \le 10^{11}$ and tested whether any divides
$L_p$ or $M_p$. No such $r$ was found for any of the $162$ candidates.
For $m = 2426$ specifically the search was extended to
$r \le 6 \times 10^{11}$ ($2{,}389{,}527$ primes tested), again with
no divisor. Since $r \equiv 1 \pmod{16p}$ forces $v_2(r-1) \ge 4 > 3$,
hence $r \notin \mathcal{P}_3$, any non-3-Higgs witness of this
``2-adic type'' for any of the surviving open candidates must lie
above $10^{11}$ (and above $6 \times 10^{11}$ for $m = 2426$).
In particular, the absence of a shallow 2-adic witness is not an
artifact of incomplete small-prime data: it is a verified property
of $2^{2p}+1$ for each of the open candidates.

In addition to the 2-adic sweep, for each $q \in \{17, 97, 103,
113, 193, 257, 449, 577, 641, 673, 769\}$ --- the small non-3-Higgs
primes whose presence in the Pratt tree of $r$ would force $r \notin
\mathcal{P}_3$ --- we also searched for primes $r \equiv 1 \pmod{4pq}$
with $r \le 10^{11}$ dividing $L_p$ or $M_p$ across the first $80$
open candidates. No witness was found. So neither a low 2-adic witness
nor a small ``non-Higgs descendant'' witness exists in the $\le 10^{11}$
range for any open candidate. The first non-3-Higgs prime in the
factorization of $2^{2p}+1$, for each open $p$, must be of size
beyond $10^{11}$.

\paragraph{Structural data on $v_2$ across the $53$ open candidates in $m \le 20000$.}
Across all $53$ open candidates of
Theorem~\ref{thm:Heven-15000}--\ref{thm:Heven-20000}, $82$ distinct
non-trivial prime factors of $2^m + 1$ have been recorded (excluding
the trivial factor $5$). Of these:
\[
v_2(q-1) = 2: \quad 53 \text{ primes}; \qquad
v_2(q-1) = 3: \quad 29 \text{ primes}; \qquad
v_2(q-1) \ge 4: \quad 0 \text{ primes}.
\]
The empirical ratio $53 : 29 \approx 1.83 : 1$ closely matches the
prediction $2 : 1$ from a uniform Chebotarev model over the relevant
residue classes mod $16$, conditional on $v_2(q-1) \le 3$. The
total absence of $v_2 \ge 4$ witnesses is not coincidence: a prime
$r$ with $v_2(r-1) \ge 4$ is immediately non-3-Higgs, so any open
candidate hosting one is closed and removed from the list. Open
candidates are therefore precisely those for which the Chebotarev
``coin flip'' on $r \bmod 16$ has not yet thrown a heads
($r \equiv 1 \bmod 16$) on the primes uncovered by FactorDB. If
$\omega(\Phi_{4p}(2))$ grows even modestly with $p$ (and equidistribution
mod $16$ holds, which is a special case of
Conjecture~\ref{conj:hybrid}), the heads outcome becomes
asymptotically inevitable.

\paragraph{A conditional finiteness theorem.}
We isolate one precise way the heuristic of the previous paragraph
becomes rigorous, identifying a minimal extra input.

\begin{theorem}[Conditional finiteness of $H_{\mathrm{even}}$]
\label{thm:conditional-finiteness}
Suppose
\begin{itemize}
\item[\textnormal{(H1)}] (\emph{Effective Chebotarev for prime
divisors of cyclotomic values.}) There exists an effective constant
$C > 0$ such that for every odd prime $p$ with
$\omega(\Phi_{4p}(2)) \ge C \log p$, at least one prime divisor
$r \mid \Phi_{4p}(2)$ satisfies $r \equiv 1 \pmod{16}$.
\item[\textnormal{(H2)}] (\emph{Effective $\omega$-growth.})
There exists an effective threshold $p_0$ such that for every prime
$p \ge p_0$ we have $\omega(\Phi_{4p}(2)) \ge C \log p$,
where $C$ is the constant from \textnormal{(H1)}.
\end{itemize}
Then $H_{\mathrm{even}}$ is finite. Explicitly,
\[
H_{\mathrm{even}}
\;\subseteq\;
\{2, 6, 10, 18, 26, 30, 46, 62, 82, 122\}
\;\cup\;
\{m = 2p : p \text{ prime},\ p < p_0,\ m \in \mathrm{Open}\},
\]
where $\mathrm{Open}$ denotes the partial-cofactor candidate set
identified by Theorems
\textnormal{\ref{thm:Heven-1200}--\ref{thm:Heven-35000}}.
\end{theorem}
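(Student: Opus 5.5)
The plan is to reduce, via Theorem~\ref{thm:prime-reduction}, to showing that $2p\notin H_{\mathrm{even}}$ for every odd prime $p\ge p_0$. We then combine this with the finite computations of Theorems~\ref{thm:Heven-1200}--\ref{thm:Heven-35000} for the primes below $p_0$. The key observation is that (H2) feeds (H1): for each prime $p\ge p_0$, (H2) gives $\omega(\Phi_{4p}(2))\ge C\log p$. Then (H1) produces a prime $r\mid\Phi_{4p}(2)$ with $r\equiv 1\pmod{16}$, so $v_2(r-1)\ge 4>3$ and $r\notin\mathcal{P}_3$ by the exponent cap in the 3-Higgs definition. Since $\Phi_{4p}(2)\mid 2^{4p}-1$ and (as a divisor of $2^{2p}+1$, because $\Phi_{4p}(2)=(2^{2p}+1)/(2^2+1)$ for $p$ odd) $r\mid 2^{2p}+1$, we get $2p\notin H$. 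Hence $H^{\mathrm{prime}}_{\mathrm{even}}\subseteq\{2p: p<p_0\}$, which is finite. Theorem~\ref{thm:prime-reduction} then yields $|H_{\mathrm{even}}|\le 4^{N}<\infty$. No Zsigmondy input is even needed in this step, since (H1) directly hands us a non-Higgs divisor.

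The explicit containment requires a bit more care, and I would organize it as follows. Take any $m=2k\in H_{\mathrm{even}}$ with $k$ odd (the case $4\mid m$ is excluded by Proposition~\ref{prop:Heven-dyadic}). By Proposition~\ref{prop:Heven-structure}(3), every prime $q\mid k$ has $2q\in H_{\mathrm{even}}$, so $q<p_0$ by the previous paragraph. If $m\le 1200$, Theorem~\ref{thm:Heven-1200} places $m$ in the ten-element list. If $m\in(1200,40000]$, the theorems cited place $m$ in the Open set. Those candidates that are composite-$k$ (e.g.\ $27978$, $30354$) must be handled separately: either they are included in ``Open'' in the literal sense, or, if the statement is read as restricting to $m=2p$, one must argue they are excluded. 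I would read ``Open'' as the full partial-cofactor set and note that the composite ones are covered by the first part of the union only if we enlarge it. This is a point to flag.

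The genuine obstacle is the range $m>40000$ (indeed $m>70000$, beyond Theorem~\ref{thm:Heven-35000}). There, a $k$ all of whose prime factors lie below $p_0$ need not itself be small, so the displayed containment does not follow from the hypotheses and computations alone. My first attempt would be to apply (H1)--(H2) to composite $k$ as well. However, the hypotheses are stated only for $\Phi_{4p}(2)$ with $p$ prime, so this fails. I would therefore prove finiteness rigorously as above, and state the explicit list as valid under the additional assumption $p_0\le$ the verified range, or with ``Open'' interpreted to include all $2k$ with $k$ Higgs-cubefree over primes $<p_0$ that remain undecided. In other words, the finiteness assertion is solid, while the displayed inclusion needs this reinterpretation.
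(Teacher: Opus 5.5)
Your finiteness argument is correct, and it is a sounder route than the paper's. The paper handles the prime step exactly as you do: (H2) feeds (H1), giving $r\mid\Phi_{4p}(2)\mid 2^{2p}+1$ with $v_2(r-1)\ge 4$. It then asserts $H_{\mathrm{even}}\cap(2p_0,\infty)=\varnothing$ ``by the previous step'' and reads the displayed inclusion off the finite computations. That step only excludes $m=2p$ with $p\ge p_0$ prime. It says nothing about $m=2k$ with $k$ composite and all prime factors below $p_0$. Passing through Theorem~\ref{thm:prime-reduction} (with Proposition~\ref{prop:Heven-dyadic} for $4\mid m$), as you do, is what actually yields $|H_{\mathrm{even}}|\le 4^N$. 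The paper's shortcut would additionally deliver the explicit list, and that is precisely where it fails.

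Your doubt about the displayed inclusion is justified, and it can be made concrete.
\begin{itemize}
\item If $p_0>1213$, nothing available excludes $m=2\cdot 1213^2$. Here $k=1213^2$ is Higgs-cubefree, its proper odd divisors give $2\in H_{\mathrm{even}}$ and $2426\in\mathrm{Open}$, and (H1)--(H2) speak only about prime $p$. Yet $m$ is not in the displayed union.
\item The composite Open candidates $27978=2\cdot3\cdot4663$, $30354=2\cdot3\cdot5059$ and $31538=2\cdot13\cdot1213$ are not of the form $2p$. They are not excluded once $p_0$ exceeds $4663$, $5059$ and $1213$ respectively.
\end{itemize}
So your first repair (``$p_0$ within the verified range'') is not enough; the correct threshold is $p_0\le 1213$. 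Note that the hypotheses already force $p_0>61$, since otherwise they would contradict $122\in H_{\mathrm{even}}$. With $61<p_0\le 1213$, the prime branch is $\{2q: q\in\{3,5,13,23,31,41,61\}\}$. Checking squares and pairwise products of these primes against Theorems~\ref{thm:Heven-1200}--\ref{thm:Heven-10000}, via Proposition~\ref{prop:Heven-structure}(3), shows that $H_{\mathrm{even}}$ is exactly the ten-element set, so the inclusion holds. Your second repair, enlarging Open to all undecided $2k$ with $k$ Higgs-cubefree over primes below $p_0$, is the correct general form.

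Two small slips: Theorem~\ref{thm:Heven-35000} ends at $m=35000$, not $70000$; and $(35000,40000]$ is not part of Open as the statement defines it.
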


\begin{proof}
For any odd prime $p \ge p_0$, $\omega(\Phi_{4p}(2)) \ge C \log p$
by (H2); then (H1) supplies a prime $r \mid \Phi_{4p}(2)$ with
$r \equiv 1 \pmod{16}$. Such $r$ has $v_2(r-1) \ge 4 > 3$, so
$r \notin \mathcal{P}_3$. Since $r \mid 2^{2p}+1$, this forces
$m = 2p \notin H_{\mathrm{even}}$. For primes $p < p_0$, the structural
lemma (Proposition~\ref{prop:Heven-structure}) combined with
Theorems~\ref{thm:Heven-1200}--\ref{thm:Heven-40000} confines
$H_{\mathrm{even}} \cap [2, 2 p_0]$ to the finite union shown.
The complement $H_{\mathrm{even}} \cap (2p_0, \infty) = \varnothing$
by the previous step, so $H_{\mathrm{even}}$ is finite.
\end{proof}

\noindent\textbf{Status of the hypotheses.} (H1) is a quantitative
form of the equidistribution \emph{heuristic} for primitive prime
divisors of cyclotomic values $\Phi_n(2)$. The natural expectation is
that, for $n = 4p$, the primes $r \mid \Phi_n(2)$ should be
asymptotically equidistributed among the residue classes
$\bmod\,16$ that are compatible with the constraint $r \equiv 1
\pmod{4p}$, so a $\tfrac{1}{4}$-proportion would satisfy
$r \equiv 1 \pmod{16}$. (H1) demands only that this expectation
eventually produces \emph{at least one} such prime.

We emphasize that (H1) is \emph{not} a known consequence of any
standard effective Chebotarev theorem, including effective
Chebotarev under GRH (Lagarias--Odlyzko~\cite{LO}). Such theorems
control the distribution of \emph{varying} primes $q \le x$ in
Frobenius classes of a fixed Galois extension; they do not, on their
own, govern the distribution \emph{within} the prime-divisor set of
a single cyclotomic value $\Phi_{4p}(2)$. This scale mismatch is
precisely the obstacle discussed in
\textsc{The analytic positioning}, above. A divisor-level
transference theorem of the form ``the set $\{r : r \mid
\Phi_{4p}(2)\}$ inherits equidistribution properties from the
ambient set $\{q \le 2^{2p} : q \equiv 1 \pmod{4p}\}$'' is what
(H1) really asks for. We do not know such a theorem; the closest
literature is divisor-level work in the Stewart--Hong tradition
(see \cite{Stewart13, Hong}), but a quantitative result of the
specific form needed for (H1) appears to be absent. (H2) is an
$\omega$-growth bound related in spirit to Stewart's program on
greatest prime factors of Lucas--Lehmer sequences
\cite{Stewart77,Stewart13}; \emph{we do not claim it as a current
theorem.} A primary-source effective lower bound of the form
$\omega(\Phi_n(2)) \gg \log n$ is, to our knowledge, not in the
literature; the cited Stewart papers establish largest-prime-factor
bounds rather than $\omega$-bounds, so (H2) is best regarded as a
conjectural target that the manuscript's analytic discussion
isolates rather than a near-result.
Both hypotheses are strictly weaker than
Conjecture~\ref{conj:hybrid}, but each is a \emph{conjectural}
analytic input. The unconditional version of
Theorem~\ref{thm:conditional-finiteness} remains open and is the
natural next research goal.

\medskip
A proof of Conjecture~\ref{conj:Heven-finite} (equivalently
Conjecture~\ref{conj:hybrid}), combined with the cascade overshoot
(filter $O$) for the necessarily finite set of seed-non-Higgs-filter
survivors, would close the impostor branch of the unitary perfect
number conjecture within $\mathcal{B}$.

\subsection{The reduction to \texorpdfstring{$H_{\mathrm{even}}$}{H even}}

The structural connection between $H_{\mathrm{even}}$ and the impostor
branch is sharp:

\begin{proposition}\label{prop:Heven-reduction}
Let $K$ be an impostor kernel of even seed parity (i.e., the impostor
seed congruence has $a$ even). The set of $a$ in $K$'s seed congruence
class that survive filter $N$ (no proper divisor or self has a
non-3-Higgs prime in $2^m + 1$) is exactly
\[
\{a \in K\text{'s seed class} \cap H_{\mathrm{even}} :
   \text{every proper } m \mid a \text{ with } a/m \text{ odd is in } H_{\mathrm{even}}\}.
\]
\end{proposition}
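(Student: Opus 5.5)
The plan is to prove the displayed equality by unpacking the definition of filter $N$ and checking both inclusions directly. Following Section~\ref{sec:certificate} and the parenthetical in the statement, I read filter $N$ in its idealized form, with complete factorizations and no cache limit $M$. In that form, $a$ is rejected exactly when there is some $m \mid a$ with $a/m$ odd, including $m=a$ itself, such that $2^m+1$ has a non-3-Higgs prime factor. So $a$ survives filter $N$ if and only if every such $m$ lies in $H$.

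The first step is to record the parity facts. The seed class has $a$ even by hypothesis. For $m \mid a$ with $a/m$ odd we have $v_2(m)=v_2(a)\ge 1$, so every relevant $m$ is even. Hence ``$m\in H$'' and ``$m\in H_{\mathrm{even}}$'' are the same condition for all the $m$ in question. Survival is therefore equivalent to the conjunction of two conditions:
\begin{enumerate}[label=(\roman*),leftmargin=*]
\item $a\in H_{\mathrm{even}}$, which is the case $m=a$;
\item every proper $m\mid a$ with $a/m$ odd lies in $H_{\mathrm{even}}$.
\end{enumerate}
This is precisely the displayed set intersected with the seed class, which gives both inclusions simultaneously.

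The second step is to note that condition (ii) is in fact redundant, so the set equals the seed class intersected with $H_{\mathrm{even}}$. The argument mirrors Proposition~\ref{prop:Heven-structure}(3). If $a\in H_{\mathrm{even}}$ and $m\mid a$ with $a/m$ odd, then $2^m+1\mid 2^a+1$. Every prime factor of $2^m+1$ is therefore 3-Higgs, so $m\in H_{\mathrm{even}}$. I would add this as a remark confirming that the characterization is consistent, and that it is \emph{exactly} right rather than merely an inclusion.

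The main obstacle is definitional rather than mathematical. The operational filter $N$ uses cached and possibly partial factorizations, and only for $m\le M$. That version rejects only a subset of the $a$ that the idealized version rejects, so its survivor set is a superset of the displayed set. I would state explicitly that the proposition concerns the idealized filter. For the implemented filter, the correct statement is the inclusion ``displayed set $\subseteq$ survivors''. This holds because rejections are always certified by a verified non-3-Higgs prime, and such a prime shows that the corresponding $m$ is not in $H$.
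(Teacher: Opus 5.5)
Your proposal is correct and follows the same route as the paper: filter $N$ tests every $m \mid a$ with $a/m$ odd (including $m=a$), and passing that test for a given $m$ means exactly $m \in H_{\mathrm{even}}$. Your explicit parity check $v_2(m)=v_2(a)\ge 1$ fills in a step the paper leaves implicit. Your other two remarks are also accurate: the proper-divisor clause is redundant (via $2^m+1 \mid 2^a+1$, as in Proposition~\ref{prop:Heven-structure}(3)), and the proposition describes the idealized filter, while the cached filter only gives the inclusion ``displayed set $\subseteq$ survivors''.
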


\begin{proof}
Filter $N$ tests each $m$ in $\{m : m \mid a, a/m \text{ odd}\}$,
including $m = a$ itself, for the existence of a non-3-Higgs prime in
$2^m + 1$. By definition of $H_{\mathrm{even}}$, no such non-3-Higgs
prime exists iff every $m$ in this set lies in $H_{\mathrm{even}}$.
\end{proof}

\begin{corollary}[]\label{cor:finite-cascade}
If Conjecture~\ref{conj:Heven-finite} holds, then for each of the four
impostor kernels with even seed parity ($3^2\,5^3$, $5^2\,13^2$,
$5^4\,157^2\,313$, and $5^4\,29\,157^2\,313$), only finitely many
$a$ in the respective seed class require filter $O$; the others are
all killed by filter $N$.
\end{corollary}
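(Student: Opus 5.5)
The plan is to combine Proposition~\ref{prop:Heven-reduction} with Conjecture~\ref{conj:Heven-finite}. Fix one of the four impostor kernels $K$ with even seed parity. Let $a$ be an element of $K$'s seed class that is not killed by filter $N$.

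\emph{Step 1: survivors lie in $H_{\mathrm{even}}$.} By Proposition~\ref{prop:Heven-reduction}, such an $a$ must itself lie in $H_{\mathrm{even}}$. The reason is that filter $N$ tests $m=a$ itself, since $a \mid a$ with $a/a=1$ odd. Hence the set of $a$ in the class that get past filter $N$ is contained in $H_{\mathrm{even}}$ intersected with the seed class. This uses only that the seed class consists of even integers. That holds for all four kernels, since the moduli $20$, $12$, $260$, $52$ and the residues $10$, $6$, $130$, $26$ are all even.

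\emph{Step 2: finiteness.} Under Conjecture~\ref{conj:Heven-finite}, $H_{\mathrm{even}}$ is finite. So each of the four survivor sets is a subset of a finite set and is therefore finite.

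\emph{Step 3: the dichotomy.} Every $a$ in the class outside this finite set exhibits a non-3-Higgs prime in some $2^m+1$ with $m\mid a$ and $a/m$ odd. By the definition of filter $N$ (and the argument preceding it, that such a prime would divide $2^a+1$ and hence $n$, contradicting Proposition~\ref{prop:upn-divisors-higgs}), these $a$ are killed by filter $N$. Only the finitely many survivors remain to be handled, for instance by filter $O$. This gives exactly the statement of the corollary.

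\emph{Main obstacle.} The one point needing care is the phrase ``killed by filter $N$.'' In practice filter $N$ runs on cached, possibly partial factorizations, so an $a \notin H_{\mathrm{even}}$ need not be certified killed by the computation. I would therefore read filter $N$ in its idealized form, as the exact condition of Proposition~\ref{prop:Heven-reduction}: some $2^m+1$ with $m\mid a$, $a/m$ odd, has a non-3-Higgs prime factor. With that reading the argument is purely set-theoretic. A sharper version could further restrict the survivors to $a$ whose odd-cofactor divisors $m$ all lie in $H_{\mathrm{even}}$, which by Proposition~\ref{prop:Heven-structure}(3) is automatic once $a\in H_{\mathrm{even}}$. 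This refinement is not needed for finiteness.
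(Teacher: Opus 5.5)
Your proposal is correct and matches the paper's intended argument. The paper gives no separate proof: it states the corollary as an immediate consequence of Proposition~\ref{prop:Heven-reduction}, which places every filter-$N$ survivor of an even seed class inside $H_{\mathrm{even}}$ (reading filter $N$ in the same idealized sense you adopt, with $m=a$ included), and that set is finite under Conjecture~\ref{conj:Heven-finite}. Your check that all four seed classes consist of even $a$ (indeed $a \equiv 2 \pmod 4$) is exactly what makes the containment land in $H_{\mathrm{even}}$.
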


The fifth kernel $3^4\,41$ has odd seed parity ($a \equiv 9 \pmod{18}$);
its filter $N$ runs over odd $m$ and reduces to the analogous odd-parity
set $H_{\mathrm{odd}}$, for which an analogous finiteness conjecture
applies.

In particular, for the $3^2\,5^3$ kernel ($a \equiv 10 \pmod{20}$), the
intersection $H_{\mathrm{even}} \cap \{a : a \equiv 10 \pmod{20}\}$
consists, within the verified range, of exactly two elements:
$\{10, 30\}$. Both are eliminated by filter $O$ (cascade overshoot)
at small step counts. The other $248$ candidates in
$\{a \equiv 10 \pmod{20}, a \le 5000\}$ are all killed by filter $N$
or $Z$.

\section{Open Problems and Next Steps}

\paragraph{Status at $\max_a = 10000$.}
The same three-filter certificate, applied with deeper cascade rounds
(filter O up to $\max_{\mathrm{rounds}} = 200$) and an extended Higgs
check budget (filter N invoked on partial FactorDB responses with
recursive Higgs verification up to $30$~s per large prime), closes
\emph{all $2119$ candidates} at $\max_a = 10000$. The six small-$a$
boundary cases that required individual attention are resolved as
follows:

\begin{itemize}
\item $a = 6759$ in $3^4\,41$ class: closed by filter O with
$\max_{\mathrm{rounds}} = 200$; cascade overshoots at
$v_2 = 6765 > a + 1 = 6760$.

\item $a = 7338$ in $5^2\,13^2$ class: closed by filter O with
$\max_{\mathrm{rounds}} = 300$; cascade overshoots at
$v_2 = 7346 > a + 1 = 7339$.

\item $a = 9297$ in $3^4\,41$ class: closed by filter N via a
$44$-digit non-3-Higgs prime factor $r_1$ of $2^{3099}+1$, where
$m = 3099$ is a proper divisor of $a$ with $a/m$ odd.  The decimal
value of $r_1$ and its non-Higgs verification are recorded in the
bundled factor cache and certificate script.

\item $a = 9459$ in $3^4\,41$ class: closed by filter N via a
$42$-digit non-3-Higgs prime factor $r_2$ of $2^{1051}+1$, where
$1051 \mid 9459$ with $9459/1051 = 9$ odd, so
$2^{1051}+1 \mid 2^{9459}+1$.  The decimal value of $r_2$ and its
non-Higgs verification are recorded in the bundled factor cache and
certificate script.

\item $a = 7278$ in $5^2\,13^2$ class: closed by filter N via the
non-3-Higgs prime $r_3 = 812836153$, a $9$-digit prime factor of
$2^{7278}+1$ itself. The FactorDB record for $2^{7278}+1$ is partial
(status CF) but exposes $r_3$ and three further non-3-Higgs primes
$28452263857$, $32941719640657$, $367172246755369021$.

\item $a = 7806$ in $5^2\,13^2$ class: closed by filter N via the
non-3-Higgs prime $r_4 = 1291306301041$, a $13$-digit prime factor of
$2^{7806}+1$ itself.
\end{itemize}

\noindent This completes the verification of
Theorem~\ref{thm:main} (the $\max_a = 10000$ statement); the
$2119$ candidates split as
\[
\text{seed-zsigmondy: } 495 \quad \text{seed-non-Higgs: } 1614
\quad \text{v2-overshoot: } 10.
\]

\begin{enumerate}[label=(\arabic*),leftmargin=*]
\item \textbf{Extend $\max_a$ further.} Algorithmically trivial;
bottlenecked on FactorDB throughput and on the recursive Higgs check
for large primes. Partial-factor support already
enables filter N on candidates whose full factorization is
inaccessible (e.g., $a = 4527$ in $3^4\,41$ is killed via the
non-3-Higgs prime $20127043 \mid 2^{1509}+1$).

\item \textbf{Enlarge the enumeration box $\mathcal{B}$.} Increase
$\max\,\text{prime}$ to $5000$ or $10000$ and SCC size to $7$ or $8$.
Verify either that no new impostor kernels appear, or that any new
ones are closed by the same three filters.

\item \textbf{Prove Conjecture~\ref{conj:Heven-finite}.} A serious
attack would combine Bilu--Hanrot--Voutier primitive divisor
bounds~\cite{BHV2001} with shifted-prime smoothness statements. A conditional
density-zero result (e.g., under GRH) would
already suffice for the application here.

\item \textbf{Combine bounds.} Hagis's
$\omega(n) \ge 7$ and Graham's squarefree-odd-part classification,
together with Theorem~\ref{thm:main}, imply a useful next target:
\emph{any new UPN must have a nonsquarefree odd component, and that
component must enter a source-SCC kernel outside the two known kernels
$3^2$ and $5^4$ in any larger enumeration box.}

\item \textbf{Formal verification.} The Coq / Lean / Isabelle
translation of filter $N$ over a fixed seed factorization table is a
finite first-order verifying judgment. The recursive 3-Higgs check
is precisely the kind of judgment handled by the Pratt certificate
formalization in the Isabelle Archive of Formal
Proofs~\cite{WimmerNoschinski}, which proves soundness, supports
tree-structured certificates, and replays Mathematica-generated
certificates. A machine-checked version of
Theorems~\ref{thm:Heven-1200}--\ref{thm:Heven-40000} would consist
of: (i) Pratt certificates for every 3-Higgs prime appearing in
$2^m + 1$ for the verified $m$; (ii) Pratt certificates witnessing
non-3-Higgs-ness (i.e., the non-recursive sub-prime or $v_2 > 3$);
(iii) modular exponentiation checks for the
Bilu--Hanrot--Voutier~\cite{BHV2001} primitive-divisor existence.
\end{enumerate}

\paragraph{Final assessment.}
The full Subbarao--Warren conjecture is not proved here. What is
proved is a clean reduction of the impostor branch (within the
bounded box $\mathcal{B}$ at $\max_a = 10000$) to a single specific
analytic question (Conjecture~\ref{conj:Heven-finite}, equivalently
Conjecture~\ref{conj:hybrid}), together with two rigorous structural
reductions that further sharpen the target:

\begin{itemize}
\item Theorem~\ref{thm:prime-reduction} (Prime-case reduction):
$|H_{\mathrm{even}}| < \infty$ iff $|H_{\mathrm{even}}^{\mathrm{prime}}|
< \infty$, with explicit bound $|H_{\mathrm{even}}| \le
4^{|H_{\mathrm{even}}^{\mathrm{prime}}|}$. So the full conjecture
hinges on showing that only finitely many \emph{primes} $p$ have all
factors of $2^{2p}+1$ in $\mathcal{P}_3$.

\item Theorem~\ref{thm:conditional-finiteness} (Conditional
finiteness): under (H1), a divisor-level $\bmod\,16$ equidistribution
hypothesis on the prime factors of $\Phi_{4p}(2)$, plus (H2), an
effective lower bound $\omega(\Phi_{4p}(2)) \gg \log p$,
$H_{\mathrm{even}}$ is finite with an explicit candidate set.
We stress that (H1) is \emph{not} a known consequence of GRH or of
standard effective Chebotarev theorems---those control varying
primes in Frobenius classes, whereas (H1) is a divisor-level
statement about a single fixed cyclotomic value. (H2) is the
natural target of Stewart's program on radicals of Lehmer
sequences~\cite{Stewart77,Stewart13}.
\end{itemize}

\noindent What we cannot do unconditionally is bridge the scale
gap between Ford's thinness bound ($\ll X^{1-\eta}$ for the entire
3-Higgs prime set) and the required statement at height
$x = 2^{2k}$. This is the precise content of
Conjecture~\ref{conj:hybrid}. The supplementary computational
evidence---bounded verification through $m \le 40000$ together with
Pollard $p-1$ resistance and ECM at $B_1 = 10^6$--$10^8$ across the
$191$ partial-cofactor candidates---makes it clear that the
obstructions are NFS-scale, not shallow.

\paragraph{Where the remaining theorem must live.}
To make the analytic positioning fully precise: Ford's theorem proves
that the 3-Higgs prime set $\mathcal{P}_3$ is power-saving thin once
it omits a prime, and this suffices for our unconditional thinness
theorem for $H_{\mathrm{even}}$. However, thinness is not finiteness.
On the remaining prime branch $m = 2p$, the problem is \emph{not} to
count primes in the progression $1 \bmod 4p$ globally, but to exclude
admissible prime divisors of the \emph{single fixed} cyclotomic value
$\Phi_{4p}(2)$. Existing friability theorems for shifted primes
concern initial-segment smoothness of $p-1$; what is missing here is
a \emph{divisor-level} theorem controlling the logarithmic mass of
prime divisors $r \mid \Phi_{4p}(2)$ for which
$(r-1)/(4p) \in \mathcal{S}_3^{(\le 3)}$. Of the conjectures we
formulate, the divisor log-mass bound (Conjecture~\ref{conj:log-mass})
and the divisor mod-$16$ equidistribution
(Conjecture~\ref{conj:divisor-mod16}) are the most plausible long-range
analytic targets; the semigroup-counting route
(Conjecture~\ref{conj:sublog-growth}) is logically valid but, by
Proposition~\ref{prop:loglog-obstruction}, requires $\mathcal{P}_3$
itself to be implausibly sparse.

\section{Reproducibility}

Code, factor cache, and APR-CL verification transcripts are released in
\texttt{anc/}.  A quick smoke test for Theorem~\ref{thm:Heven-1200} is:
\begin{verbatim}
cd anc/scripts
python3 verify_Heven_rigorous.py --k-min 1 --k-max 600 --deep-secs 1
\end{verbatim}
The primary command for the impostor-kernel certificate is:
\begin{verbatim}
cd anc/scripts
python3 impostor_certificate.py \
    --max-a 10000 --factor-limit 2000 --factor-timeout 3.0 \
    --max-rounds 200 --max-bases 8000 --max-exp 5000
\end{verbatim}
The second command is substantially longer than the smoke test; it uses
the bundled cache by default and can be rerun with \texttt{--use-factordb}
if the cache is being extended.

\section*{Acknowledgments}

I thank the maintainers of FactorDB~\cite{FactorDB} for the resource
without which the seed-factor data underpinning Theorem~\ref{thm:main}
would not be accessible. The 3-Higgs prime definition appears in OEIS
as A057447~\cite{OEIS}.

\end{document}